\theoremstyle{plain}
\newtheorem{theorem}{Theorem}[section]
\theoremstyle{plain}
\newtheorem{lemma}[theorem]{Lemma}
\theoremstyle{plain}
\newtheorem{corollary}[theorem]{Corollary}
\theoremstyle{definition}
\newtheorem{definition}[theorem]{Definition}
\theoremstyle{plain}
\newtheorem{proposition}[theorem]{Proposition}
\theoremstyle{remark}
\newtheorem{remark}[theorem]{Remark}
\theoremstyle{definition}
\newtheorem{example}[theorem]{Example}
\theoremstyle{plain}
\theoremstyle{plain}
\theoremstyle{plain}
\title[Boundaries of coarse proximity spaces]{Boundaries of coarse proximity spaces and boundaries of compactifications}
\author{Pawel Grzegrzolka}
\address{Syracuse University, Syracuse, USA}
\email{pgrzegrz@syr.edu}
\author{Jeremy Siegert}
\address{Ben Gurion University of the Negev, Beer Sheva, Israel} 
\email{siegertj@post.bgu.ac.il}
\date{\today} 
\keywords{coarse geometry, coarse topology, large-scale geometry, coarse proximity, proximity, compactifications, boundaries, Smirnov compactification, Higson corona, Freudenthal boundary, Gromov boundary, Visual Boundary, $\delta$-hyperbolic spaces, Cat(0) spaces}
\subjclass[2010]{54E05, 54D35, 54D40}
\begin{document}

\begin{abstract}
In this paper, we introduce the boundary $\mathcal{U}X$ of a coarse proximity space $(X,\mathcal{B},{\bf b}).$  This boundary is a subset of the boundary of a certain Smirnov compactification. We show that $\mathcal{U}X$ is compact and Hausdorff and that every compactification of a locally compact Hausdorff space induces a coarse proximity structure whose corresponding boundary is the boundary of the compactification. We then show that many boundaries of well-known compactifications arise as boundaries of coarse proximity spaces. In particular, we give four coarse proximity structures whose boundaries are the Gromov, visual, Higson, and Freudenthal boundaries.
\end{abstract}

\maketitle
\tableofcontents

\section{Introduction}
The field of coarse geometry (occasionally called coarse topology) can be pursued using two different, but by no means mutually exclusive perspectives. The first perspective is one we may call "geometry going to infinity" in which one typically takes notions such as uniformly bounded families and scales as primitives and pursues large scale properties of spaces (metric or otherwise) by considering properties of uniformly bounded families at ever growing "scales." Examples of structures well-suited to this perspective are metric spaces, coarse spaces (see \cite{Roe}), and large scale spaces (see \cite{Dydak}). Coarse properties that exemplify the utility of this perspective are asymptotic dimension originally defined by Gromov in \cite{Gromov} (and expanded to more general structures in \cite{Roe} and \cite{Dydak}), Property A defined by Yu in \cite{Yu}, and amenability (see for example \cite{Roe}). The first two of these properties are particularly well-known for their relationship to the Coarse Baum-Connes and Novikov conjectures (see \cite{Yu}). The other perspective may be called "geometry at infinity." This perspective takes unboundedness and asymptotic disjointness as primitives. The pursuit of large-scale properties of spaces is then conducted by considering how unbounded sets interact with each other "at infinity." Typically, this is done by assigning a topological space to an unbounded space (metric, coarse, or large scale) and considering how the large-scale properties of the base space are reflected in this topological space. The chief example in coarse geometry of such an assignment is the Higson corona (see \cite{Roe}) assigned to a proper metric space. Another such assignment is the Gromov boundary assigned to a hyperbolic metric space (see \cite{Drutu}). Techniques for studying these spaces include the use of asymptotic neighborhoods (see \cite{Dranishnikovasymptotictopology}), coarse neighborhoods (see \cite{Weighill}), asymptotic resemblance spaces (see \cite{Honari}), and coarse proximity spaces (see \cite{paper1}). 

In this paper, we will be exclusively concerned with the "geometry at infinity" perspective. Our primary contribution will be providing a geometrically intuitive "common language" with which one can speak of coarse invariants such as the Higson corona of proper metric spaces and the Gromov boundary of hyperbolic metric spaces. 
Specifically, we will show how to assign a certain boundary space to each coarse proximity space (see Section \ref{discrete extensions and boundaries of coarse proximity spaces}). These boundaries are compact Hausdorff spaces that arise as the boundaries of certain small-scale proximity relations induced by coarse proximities. We will show how basic topological notions of the boundary of a coarse proximity space are captured by the underlying coarse proximity structure (see for example Proposition \ref{coarsely close sets have intersecting trace} and Proposition \ref{equality_at_infinity}). Our secondary contribution is showing that every compactification of a locally compact Hausdorff space induces a coarse proximity structure whose corresponding boundary is the boundary of the compactification (see Theorem \ref{general_theorem}). We use this result to describe coarse proximity structures that give rise to well known boundaries in later sections. As a corollary, we will show that every compact Hausdorff space arises as the boundary of some coarse proximity space (one may compare this result to the results of \cite{yamashitamine}).

The structure of this paper is as follows. In Section \ref{clusters in proximity spaces and the smirnov compactification}, we review the Smirnov compactification of a separated proximity space and associated concepts. Likewise, in Section \ref{coarse proximities} we review the basic definitions and concepts surrounding coarse proximity spaces, as found in \cite{paper1} and \cite{paper2}. Then in Section \ref{discrete extensions and boundaries of coarse proximity spaces}, we introduce the discrete extension of a coarse proximity and use it to define the boundary of a coarse proximity space. After exploring a few basic properties of boundaries of coarse proximity spaces, we show that the assignment of the boundary to a coarse proximity space makes up a functor from coarse proximity spaces to compact Hausdorff spaces. 
In Section \ref{compact hausdorff spaces as boundaries of coarse proximity spaces}, we show how every compactification of a locally compact Hausdorff space induces a coarse proximity structure whose boundary is homeomorphic to the boundary of the compactification. Finally, in Sections \ref{the gromov boundary}, \ref{The Visual Boundary}, \ref{the higson boundary}, and \ref{the freudenthal boundary}  we describe the coarse proximity structures on proper hyperbolic metric, complete proper Cat(0), proper metric, and locally compact Hausdorff spaces whose boundaries are the the Gromov boundary, the visual boundary, the Higson corona, and the Freudenthal boundary, respectively.

\vspace{\baselineskip}

Throughout the paper, we will use the following notation and conventions:
\begin{itemize}
\item If $X$ is a topological space and $A$ is a subset of $X$, the closure of $A$ in $X$ will be denoted by $cl_X(A)$. When there is no risk of ambiguity, the subscript $X$ will be dropped, and the closure of $A$ in $X$ will be denoted by $cl(A).$
\item A compactification of a topological space $X$ will be denoted by $\overline{X}$, with the exception of the Smirnov compactification, which will be denoted by $\mathfrak{X}$.
\item If $X$ is a topological space and $\overline{X}$ is its compactification, then by the trace of $A\subseteq X$ in $B\subseteq \overline{X}$ we will mean the intersection of the closure of $A$ in $\overline{X}$ with $B$. We will denote this trace by $tr_{\overline{X}, B}(A)$. In other words,
\[tr_{\overline{X}, B}(A)= cl_{\overline{X}}(A) \cap B.\]
\item By a proper metric space we mean a metric space $(X,d)$ whose closed and bounded sets are compact.
\end{itemize}
\vspace{1em}

The authors are grateful to the reviewers whose helpful comments and suggestions greatly improved the quality of this paper.

\section{Proximities and the Smirnov Compactification}\label{clusters in proximity spaces and the smirnov compactification}
 
In this section, we recall necessary definitions and theorems related to proximities from \cite{proximityspaces}. In particular, we discuss clusters and the Smirnov Compactification. For an introduction to the subject, see \cite{proximityspaces}.

\begin{definition}
Let $X$ be a set. A binary relation $\delta$ (or $\delta_X$ if the base space is not clear from the context) on the power set of $X$ is called a {\bf proximity} on $X$ if it satisfies the following axioms for all $A,B,C \subseteq X$:
\begin{enumerate}
	\item $A\delta B\implies B\delta A,$
	\item $A\delta B\implies A,B\neq\emptyset,$
	\item $A \cap B\neq\emptyset\implies A\delta B,$
	\item $A\delta (B\cup C)\iff A\delta B\text{ or }A\delta C,$ \label{axiom44}
	\item $A\bar{\delta}B\implies\exists E\subseteq X,\,A\bar{\delta}E\text{ and }(X\setminus E)\bar{\delta}B,$ \label{axiom55}
\end{enumerate}
where $A\bar{\delta}B$ means $``A\delta B"$ does not hold. Axiom (\ref{axiom55}) is called the \textbf{strong axiom}. A pair $(X,\delta)$ where $X$ is a set and $\delta$ is a proximity on $X$ is called a {\bf proximity space}.
\end{definition}

\begin{definition}
A proximity space $(X, \delta)$ is called \textbf{separated} if 
\[\{x\}\delta\{y\} \Longleftrightarrow x=y\]
for all $x,y \in X.$
\end{definition}

\begin{example}
Let $(X,d)$ be a metric space. Then the relation $\delta_d$ 
 defined by
\[A\delta_d B \Longleftrightarrow d(A,B)=0\]
is a separated proximity called the \textbf{metric proximity}, where 
\[d(A,B)=\inf\{d(a,b) \mid a \in A, b \in B\}.\]
\end{example}

\begin{example}\label{subspace_proximity}
Let $(X, \delta_X)$ be a proximity space and let $Y\subseteq X.$ Then the relation $\delta_Y$ defined by
\[A\delta_Y B \iff A \delta_X B,\]
where $A$ and $B$ are subsets of $Y$, is a proximity relation on $Y,$ called the \textbf{subspace proximity}.
\end{example}

\begin{definition}
Let $(X, \delta_X)$ and $(Y, \delta_Y)$ be two proximity spaces. A function $f:X \to Y$ is called a \textbf{proximity map} if
\[A \delta_X B \implies f(A) \delta_Y f(B).\]
A bijective proximity map whose inverse is also a proximity map is called a \textbf{proximity isomorphism}. A proximity isomorphism onto a subspace of a proximity space in called a \textbf{proximity embedding}.
\end{definition}

\begin{theorem}\label{closure}
Let $(X,\delta)$ be a proximity space. Then $\delta$ induces a topology on $X$ defined by
\[A \text{ is closed} \quad \Longleftrightarrow \quad (A\delta \{x\} \implies x \in A).\]
This topology is always completely regular, and is Hausdorff if and only if $\delta$ is separated.
\end{theorem}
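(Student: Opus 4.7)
The plan is to prove the three distinct assertions in order: that the prescribed closed sets form a topology, that this topology is completely regular, and that it is Hausdorff precisely when $\delta$ is separated. Before starting, I would record the monotonicity consequence of axiom (\ref{axiom44}): if $A\subseteq B$ and $A\,\delta\, C$, then writing $B=A\cup(B\setminus A)$ and applying (\ref{axiom44}) gives $B\,\delta\, C$. I would also introduce the shorthand $A\prec B$ for $A\,\bar\delta\,(X\setminus B)$; with this notation the strong axiom (\ref{axiom55}) becomes the \emph{interpolation property}: $A\prec B$ implies there exists $E$ with $A\prec E\prec B$.

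First I would verify the Kuratowski closed-set axioms. Emptiness: $\emptyset$ is closed since axiom (2) forbids $\emptyset\,\delta\,\{x\}$, and $X$ is trivially closed. Finite unions: if $A,B$ are closed and $(A\cup B)\,\delta\,\{x\}$, then axiom (\ref{axiom44}) gives $A\,\delta\,\{x\}$ or $B\,\delta\,\{x\}$, so $x\in A\cup B$. Arbitrary intersections: if each $A_i$ is closed and $\bigcap A_i \,\delta\,\{x\}$, then by the monotonicity observation each $A_i\,\delta\,\{x\}$, so $x\in A_i$ for every $i$, hence $x\in\bigcap A_i$. This produces a well-defined topology.

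The main obstacle, and the step I expect to take the most care, is complete regularity. Fix a closed set $F$ and a point $x\notin F$. Closedness of $F$ means $F\,\bar\delta\,\{x\}$, i.e.\ $\{x\}\prec X\setminus F$. I would then iterate the interpolation property on the dyadic rationals in $[0,1]$: set $E_0=\{x\}$ and $E_1=X\setminus F$; having defined $E_r\prec E_s$ for consecutive dyadics $r<s$ at level $n$, insert $E_{(r+s)/2}$ with $E_r\prec E_{(r+s)/2}\prec E_s$. This yields a family $\{E_r\}_{r\in D}$ indexed by the dyadic rationals $D\subseteq[0,1]$ satisfying $E_r\prec E_s$ whenever $r<s$. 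Define
\[
f(y)=\inf\{r\in D : y\in E_r\},
\]
with the convention $\inf\emptyset=1$. Then $f(x)=0$ and $f\equiv 1$ on $F$. Continuity of $f$ reduces, in the standard Urysohn fashion, to showing that $\{f<t\}$ and $\{f>t\}$ are open for each $t\in(0,1)$: the first is $\bigcup_{r<t}E_r$, and the second is $\bigcup_{s>t}(X\setminus\operatorname{cl}E_s)$. For the latter I would verify that $E_r\prec E_s$ forces $\operatorname{cl}(E_r)\subseteq E_s$, which follows because if $y\in\operatorname{cl}(E_r)$ but $y\notin E_s$, then $\{y\}\subseteq X\setminus E_s$ and $E_r\,\delta\,\{y\}$ by the closure characterization, contradicting $E_r\,\bar\delta\,(X\setminus E_s)$. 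With this, the nested structure of the $E_r$ gives continuity.

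Finally I would argue the Hausdorff equivalence. Since the topology is completely regular, it is Hausdorff iff it is $T_1$ iff every singleton is closed. By the defining equivalence, $\{y\}$ is closed exactly when $\{y\}\,\delta\,\{x\}$ implies $x=y$, which (combined with axiom (3), forcing $\{x\}\,\delta\,\{x\}$) is precisely the separation axiom. Thus the induced topology is Hausdorff if and only if $\delta$ is separated, completing the proof.
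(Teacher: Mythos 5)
The paper does not actually prove this statement; it defers to the standard reference \cite{proximityspaces}. Your argument is the classical textbook proof (monotonicity from axiom (\ref{axiom44}), the Kuratowski axioms, a Urysohn-style iteration of the strong axiom (\ref{axiom55}) over the dyadic rationals, and the reduction of Hausdorffness to $T_1$ via complete regularity), and it is essentially correct. Two small points deserve tightening. First, you prove that $E_r\prec E_s$ forces $cl(E_r)\subseteq E_s$ and use it for the openness of $\{f>t\}$, but for the openness of $\{f<t\}=\bigcup_{r<t}E_r$ (a union of sets that are not individually open) you need the dual inclusion $E_r\subseteq X\setminus cl(X\setminus E_s)=\mathrm{int}(E_s)$ for $r<s$; this follows by the symmetric one-line argument (if $y\in E_r$ then $\{y\}\,\bar\delta\,(X\setminus E_s)$ by monotonicity, so $y\notin cl(X\setminus E_s)$), after which $\{f<t\}=\bigcup_{s<t}\mathrm{int}(E_s)$ is manifestly open. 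Second, both of these inclusions rest on the characterization $cl(A)=\{x\mid \{x\}\,\delta\,A\}$ (Proposition \ref{prop_about_closures} in the paper); that identity is not free from the definition of the topology alone --- showing that $\{x\mid \{x\}\,\delta\,A\}$ is itself closed requires one more application of the strong axiom --- so it should either be proved as a preliminary lemma or explicitly cited before the complete-regularity step. With those two additions the proof is complete, and the Hausdorff equivalence at the end is handled correctly: axiom (3) gives $\{x\}\,\delta\,\{x\}$ automatically, so separatedness is exactly the closedness of all singletons, which under complete regularity is equivalent to the Hausdorff property.
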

\begin{proof}
See \cite{proximityspaces}.
\end{proof}

It is easy to show that the metric proximity induces the metric topology, and that the subspace proximity induces the subspace topology. From now on, we will always assume that a given proximity space is equipped with the induced topology, as in Theorem \ref{closure}.

\begin{example}\label{unique_proximity}
Let $X$ be a compact Hausdorff space. Then the relation $\delta$ defined by
\[A\delta B \iff cl(A)\cap cl(B) \neq \emptyset\]
for any $A,B \subseteq X$ is the unique separated proximity on $X$ inducing the original topology on $X$.
\end{example}

\begin{proposition}\label{prop_about_closures}
Let $(X, \delta)$ be a proximity space, and let $A,B \subseteq X.$ Then
\begin{enumerate}
\item $cl(A)=\{x\in X \mid \{x\} \delta A\},$
\item $A \delta B \Longleftrightarrow (cl(A)) \delta (cl(B)).$
\end{enumerate}
\end{proposition}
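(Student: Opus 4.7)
The plan is to prove both parts using the strong axiom (Axiom 5) together with a monotonicity principle that follows from Axiom 4: namely, if $S\subseteq T$ then $S\delta B$ implies $T\delta B$ (since $T = S \cup (T\setminus S)$), or contrapositively, $T\bar\delta B$ implies $S\bar\delta B$. This will be the workhorse throughout.

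For part (1), let $C=\{x\in X\mid \{x\}\delta A\}$ and show $C=cl(A)$. First, $A\subseteq C$ follows immediately from Axiom 3 applied to $\{x\}\cap A\neq\emptyset$ for $x\in A$. Next, I claim $C$ is closed: by Theorem \ref{closure}, it suffices to show that $\{y\}\delta C$ implies $y\in C$. Suppose not, so $\{y\}\bar\delta A$. Applying the strong axiom yields a set $E$ with $\{y\}\bar\delta E$ and $(X\setminus E)\bar\delta A$. Then $C\subseteq E$: if some $x\in C$ were in $X\setminus E$, monotonicity would give $(X\setminus E)\delta A$ from $\{x\}\delta A$, a contradiction. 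But then $\{y\}\delta C\subseteq E$ forces $\{y\}\delta E$ by monotonicity, contradicting $\{y\}\bar\delta E$. So $C$ is a closed set containing $A$, giving $cl(A)\subseteq C$. For the reverse, if $F$ is any closed set with $A\subseteq F$ and $x\in C$, then $\{x\}\delta A$ and monotonicity give $\{x\}\delta F$; closedness of $F$ plus Theorem \ref{closure} then yield $x\in F$. Taking $F=cl(A)$ completes the proof.

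For part (2), I will prove the intermediate statement $A\delta B \iff cl(A)\delta B$; the full claim then follows by combining this with its symmetric version (via Axiom 1) applied to $B$. The forward direction is immediate from monotonicity since $A\subseteq cl(A)$. For the converse, suppose $A\bar\delta B$ and apply the strong axiom to produce $E$ with $A\bar\delta E$ and $(X\setminus E)\bar\delta B$. I claim $cl(A)\subseteq X\setminus E$: for any $x\in cl(A)$, part (1) gives $\{x\}\delta A$, so $x\notin E$ (else $\{x\}\subseteq E$ would force $E\delta A$ by monotonicity). Monotonicity applied to $(X\setminus E)\bar\delta B$ then yields $cl(A)\bar\delta B$, as desired.

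The main obstacle is simply being careful about the direction in which monotonicity is applied; each of the two applications of the strong axiom produces a separating set $E$ whose role (as either a superset or a complement) must be tracked precisely. Once (1) is in hand, part (2) is essentially a symmetric double application of the idea used to show that $C$ is closed.
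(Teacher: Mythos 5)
Your proof is correct. The paper itself gives no argument for this proposition and simply defers to \cite{proximityspaces}; your derivation --- the monotonicity consequence of axiom (4) combined with the strong axiom, first to show that $\{x\in X\mid \{x\}\delta A\}$ is closed and then to insert the closures one argument at a time --- is precisely the standard proof found there, so there is nothing to flag.
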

\begin{proof}
See \cite{proximityspaces}.
\end{proof}

We now introduce clusters, which are the building blocks of the Smirnov Compactification. 

\begin{definition}\label{clusters}
	Let $(X,\delta)$ be a proximity space. Let $\sigma$ be a collection of subsets of $X$. Then $\sigma$ is called a {\bf cluster} if the following hold:
	\begin{enumerate}
		\item if $A,B\in\sigma,$ then $A\delta B,$
		\item if $A\delta B$ for all $B\in\sigma,$ then $A\in\sigma,$
		\item if $(A\cup B)\in\sigma,$ then either $A\in\sigma$ or $B\in\sigma$.
	\end{enumerate}
\end{definition}

\begin{definition}
A cluster in a proximity space $(X,\delta)$ is called a {\bf point cluster} if $\{x\}\in\sigma$ for some $x\in X$. If $(X,\delta)$ is separated, we denote the point cluster containing $x$ by $\sigma_{x},$ as each cluster in a separated proximity space contains at most one singleton because distinct points are not close in separated proximity spaces.
\end{definition}

A simple result about clusters that will be useful to us in Section \ref{the freudenthal boundary} is the following:

\begin{proposition}\label{cluster_containing_both}
	Let $(X,\delta)$ be a separated proximity space and $A,B\subseteq X$ such that $A\delta B$. Then there is a cluster $\sigma$ in $X$ that contains both $A$ and $B$.
\end{proposition}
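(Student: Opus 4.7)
The plan is to invoke Zorn's lemma. I would consider the collection $\mathcal{F}$ of all families $\sigma \subseteq \mathcal{P}(X)$ such that $A, B \in \sigma$ and every two members of $\sigma$ are $\delta$-related, ordered by inclusion. The family $\{A, B\}$ lies in $\mathcal{F}$: since $A\delta B$, axiom (2) of proximity forces $A$ and $B$ to be nonempty, and axiom (3) then gives $A\delta A$ and $B\delta B$. Unions of chains in $\mathcal{F}$ remain in $\mathcal{F}$, so Zorn's lemma yields a maximal element $\sigma$.

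I would then verify the three cluster axioms of Definition \ref{clusters} for $\sigma$. Axiom (1) holds by construction. For axiom (2), if $C \delta D$ for every $D \in \sigma$, then $\sigma \cup \{C\}$ still lies in $\mathcal{F}$, so $C \in \sigma$ by maximality. Note a convenient consequence: $\sigma$ is closed under supersets, since any $G' \supseteq G \in \sigma$ inherits $\delta$-closeness to every $D \in \sigma$ (as $G \subseteq G'$ and $G \delta D$ imply $G' \delta D$), hence lies in $\sigma$ by axiom (2).

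The main obstacle is axiom (3). I would argue by contradiction: assume $P \cup Q \in \sigma$ while $P, Q \notin \sigma$. By the contrapositive of (2), choose $E, F \in \sigma$ with $P \bar{\delta} E$ and $Q \bar{\delta} F$. The strong axiom (5) applied to $P \bar{\delta} E$ produces $U \subseteq X$ with $P \bar{\delta} U$ and $(X \setminus U) \bar{\delta} E$; since $E \cap (X \setminus U) \neq \emptyset$ would force $(X \setminus U) \delta E$ by axiom (3) of proximity, we get $E \subseteq U$, and hence $U \in \sigma$ by superset-closure. Analogously, obtain $V \in \sigma$ with $F \subseteq V$ and $Q \bar{\delta} V$. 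Then $U \cap V$ is a subset of both $U$ and $V$, so $P \bar{\delta} (U \cap V)$ and $Q \bar{\delta} (U \cap V)$, whence $(P \cup Q) \bar{\delta} (U \cap V)$ by axiom (4).

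The heart of the matter is then to produce an element of $\sigma$ contained in (or equal to) $U \cap V$, which would, via axiom (1), force $(P\cup Q) \delta (U \cap V)$ and yield the desired contradiction. I expect this to be the most delicate step: it cannot be done by naive intersection (since $E \cap F$ may be empty even though $E \delta F$), but instead requires further use of the strong axiom to interpolate sets $E \subseteq E' \subseteq U$ and $F \subseteq F' \subseteq V$ with $E' ,F' \in \sigma$ possessing enough ``room'' to force a common member of $\sigma$ inside $U \cap V$. This interpolation argument, combined with axiom (2) applied one more time to the resulting set, will close the loop. I would consult \cite{proximityspaces} for the exact combinatorial manipulation in this last step.
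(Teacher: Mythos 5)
Your reduction to Zorn's lemma does not work, and the step you defer to \cite{proximityspaces} is not merely delicate --- it is unprovable for the object you have constructed. A maximal pairwise-$\delta$-near family containing $A$ and $B$ need not satisfy axiom (3) of Definition \ref{clusters}. Concretely, let $X=\{a,b,c\}$ with the separated proximity $E\delta F\iff E\cap F\neq\emptyset$ (the strong axiom holds by taking $E$ to be the second set itself), and let $A=\{a,b\}$, $B=\{b,c\}$. The family $\tau=\{\{a,b\},\{a,c\},\{b,c\},X\}$ of all subsets with at least two elements contains $A$ and $B$, is pairwise near, and is maximal (each singleton is disjoint from some two-element member, and $\emptyset$ cannot be added), yet $\{a,b\}\in\tau$ while $\{a\}\notin\tau$ and $\{b\}\notin\tau$. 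So Zorn's lemma may hand you a maximal element that is not a cluster. Your verification of axioms (1) and (2) and the superset-closure is correct, but no interpolation via the strong axiom can rescue axiom (3); in your own notation there is in general no member of $\sigma$ inside $U\cap V$, which can even be empty. This is the familiar distinction between maximal linked systems (which build the superextension of a space) and ultrafilters (which build compactifications of Smirnov or Stone--\v{C}ech type); clusters are objects of the latter kind.

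The standard repair, which is essentially what \cite{proximityspaces} does, is to route the maximality through families closed under finite intersections. For an ultrafilter $\mathcal{F}$ on $X$, the family $\sigma=\{C\subseteq X\mid C\delta F\text{ for all }F\in\mathcal{F}\}$ is a cluster: axiom (2) holds because $\mathcal{F}\subseteq\sigma$; axiom (3) holds because if $P\bar{\delta}F_{1}$ and $Q\bar{\delta}F_{2}$ with $F_{1},F_{2}\in\mathcal{F}$, then $F_{1}\cap F_{2}\in\mathcal{F}$ and $(P\cup Q)\bar{\delta}(F_{1}\cap F_{2})$ --- exactly the ``common member inside the intersection'' you were missing, supplied by the filter property rather than by maximal linkedness; and axiom (1) is where the strong axiom enters (if $C,D\in\sigma$ with $C\bar{\delta}D$, choose $E$ with $C\bar{\delta}E$ and $(X\setminus E)\bar{\delta}D$, and note that one of $E$, $X\setminus E$ lies in $\mathcal{F}$, contradicting membership of $C$ or $D$ in $\sigma$). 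One then checks that when $A\delta B$ the proximal neighborhoods of $A$ together with those of $B$ have the finite intersection property (using axiom (4) and the observation that $A\bar{\delta}E$ together with $(X\setminus E)\bar{\delta}B$ forces $A\subseteq X\setminus E$ and hence $A\bar{\delta}B$), extends this family to an ultrafilter $\mathcal{F}$, and verifies that $A,B\in\sigma$. The paper itself offers no argument, citing \cite{proximityspaces}; the ultrafilter construction above is the proof you should reconstruct.
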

\begin{proof}
See 
\cite{proximityspaces}.
\end{proof}

We now focus on the construction of the Smirnov Compactification.

\begin{definition}
	Let $(X,\delta)$ be a proximity space. Define $\mathfrak{X}$ to be the set of all clusters in $X.$ Let $\mathcal{A}\subseteq\mathfrak{X}$. We say that a subset $A\subseteq X$ {\bf absorbs} $\mathcal{A}$ if $A\in\sigma$ for all $\sigma\in\mathcal{A}$.
\end{definition}

\begin{theorem}
	Let $(X,\delta)$ be a separated proximity space and $\mathfrak{X}$ the corresponding set of all clusters in $X.$ For two subsets $\mathcal{A},\mathcal{B}\subseteq\mathfrak{X},$ define:
\[\mathcal{A}\delta^{*}\mathcal{B} \quad \Longleftrightarrow \quad A\delta B\]
for all  $A,B\subseteq X$ that absorb $\mathcal{A}$ and $\mathcal{B}$, respectively. The relation $\delta^{*},$ called the \textbf{Smirnov proximity}, is a separated proximity on $\mathfrak{X}$ that induces a compact Hausdorff topology on $\mathfrak{X}.$ The mapping $f:X\rightarrow\mathfrak{X}$ defined by $f(x)=\sigma_{x}$ is a dense proximity embedding. The space $(\mathfrak{X}, \delta^{*})$ is called the \textbf{Smirnov compactification} of $(X, \delta).$ It is the unique (up to proximity isomorphism) compact Hausdorff space into which $X$  embeds as a dense subspace through a proximity embedding.
\end{theorem}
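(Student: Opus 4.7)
The plan is to verify the theorem in stages: (1) $\delta^{*}$ satisfies the five proximity axioms and is separated; (2) the induced topology on $\mathfrak{X}$ is compact; (3) $f$ is a dense proximity embedding; (4) uniqueness. Symmetry, non-emptiness, and the intersection axiom for $\delta^{*}$ follow immediately from the corresponding properties of $\delta$; the intersection axiom uses the first cluster axiom, since any $\sigma\in\mathcal{A}\cap\mathcal{B}$ together with absorbing $A, B$ yields $A,B\in\sigma$, hence $A\delta B$. Separatedness reduces to the observation that distinct clusters must split on at least one $\delta$-far pair, for otherwise cluster axiom (2) would collapse them. The non-trivial direction of distributivity and the strong axiom for $\delta^{*}$ both depend genuinely on the strong axiom for $\delta$: given witnesses of non-proximity in $\mathfrak{X}$, one applies the strong axiom for $\delta$ to produce buffer sets in $X$, then assembles the required subsets of $\mathfrak{X}$ of the form $\mathcal{E}=\{\sigma\in\mathfrak{X} : E\in\sigma\}$, using upward closure (a consequence of cluster axiom (2)) and the primeness axiom (3) to control arbitrary absorbing sets.

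Compactness of the $\delta^{*}$-topology is the deepest step. The standard route is to show that every $\delta^{*}$-cluster on $\mathfrak{X}$ is a point cluster, which is classically equivalent to compactness of the induced topology on a separated proximity space. Given a $\delta^{*}$-cluster $\Sigma$ on $\mathfrak{X}$, one defines a candidate $\delta$-cluster $\sigma=\{A\subseteq X : \{\tau\in\mathfrak{X}: A\in\tau\}\text{ is absorbed by every member of }\Sigma\}$ on $X$, verifies the three cluster axioms using those of $\Sigma$, and checks that $\Sigma=\sigma_{\sigma}$ in $\mathfrak{X}$. For the embedding $f$, the implication $A\delta_{X}B\Rightarrow f(A)\delta^{*}f(B)$ uses Proposition \ref{prop_about_closures} together with the fact that any absorbing set for $f(A)$ contains $A$ in its closure; the converse uses that $A$ itself absorbs $f(A)$, since $A\in\sigma_{x}$ for each $x\in A$. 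Injectivity of $f$ follows from separatedness of $\delta$. Density holds because any $B\subseteq X$ absorbing $f(X)$ must lie in $\sigma_{x}$ for every $x\in X$, so $x\delta B$ for every $x$ by cluster axiom (1), which forces $cl(B)=X$; then for any $A$ in any cluster, $A\delta B$ by Proposition \ref{prop_about_closures} since $cl(A)\delta cl(B)=X$.

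Uniqueness is an immediate consequence of Example \ref{unique_proximity}: any compact Hausdorff space carries a unique compatible separated proximity, so any two compact Hausdorff proximity compactifications of $(X,\delta)$ are proximity isomorphic via the map sending a point $y$ of one compactification to the cluster $\{A\subseteq X : y\in cl(A)\}$ in $\mathfrak{X}$, with the verification that this is a cluster and that the map is a proximity isomorphism extending the identity on $X$ being routine. The two genuine obstacles in the full argument are the compactness step, where one must prove carefully that every $\delta^{*}$-cluster is a point cluster (the verification of cluster axioms for the candidate $\sigma$ requires the strong axiom of $\delta$ in disguised form), and the verification of the strong axiom for $\delta^{*}$ itself, where care is required to ensure that the buffer set $E$ produced by the strong axiom for $\delta$ gives rise to a subset $\mathcal{E}\subseteq\mathfrak{X}$ whose non-$\delta^{*}$-proximities survive quantification over \emph{all} absorbing sets, not merely the chosen witnesses $A$ and $B$.
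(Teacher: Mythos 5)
The paper does not actually prove this theorem; it defers to Section 7 of \cite{proximityspaces}, and your outline reconstructs precisely the classical argument given there: the strong axiom of $\delta$ combined with primeness of clusters (to replace arbitrary absorbing sets by complements of buffer sets) yields the union and strong axioms for $\delta^{*}$, compactness is obtained via the classical equivalence with every $\delta^{*}$-cluster being a point cluster, the embedding and density claims follow from Proposition \ref{prop_about_closures} exactly as you describe, and uniqueness follows from the uniqueness of the compatible separated proximity on a compact Hausdorff space. The only blemishes are cosmetic: your definition of the candidate cluster in the compactness step (``$\{\tau\in\mathfrak{X}:A\in\tau\}$ is absorbed by every member of $\Sigma$'') does not typecheck as written and should be the trace of $\Sigma$ on the dense copy of $X$, namely $\sigma=\{A\subseteq X : f(A)\,\delta^{*}\,\mathcal{C}\text{ for every }\mathcal{C}\in\Sigma\}$, and the worry about quantifying over \emph{all} absorbing sets in the strong axiom is unnecessary, since non-proximity of $\mathcal{A}$ and $\mathcal{E}$ requires only a single witnessing pair of far absorbing sets.
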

\begin{proof}
See Section $7$ in \cite{proximityspaces}.
\end{proof}

Given a proximity space $(X, \delta)$, we will always denote its Smirnov compactification by $\mathfrak{X}$ and the corresponding proximity by $\delta^*$. Also, it is customary to identify $X$ with its image in $\mathfrak{X}.$ We are going to follow this practice. Consequently, given $A \subseteq X,$ one can think of $A$ as a subset of $\mathfrak{X}.$ It is then easy to show that
\[cl_{\mathfrak{X}}(A) = \{ \sigma \in \mathfrak{X} \mid A \in \sigma\}.\]
 In other words, 
\[A \in \sigma \iff \sigma \in cl_{\mathfrak{X}}(A).\]

The following proposition is used to show that every proximity map extends uniquely to a proximity map between the Smirnov compactifications.

\begin{proposition}\label{associated cluster}
	Let $(X, \delta_X)$ and $(Y, \delta_Y)$ be proximity spaces. Let $f:X\rightarrow Y$ be a proximity map. Then to each cluster $\sigma_1$ in $X,$ there corresponds a cluster $\sigma_2$ in $Y$ defined by
	\[\sigma_2=\{A\subseteq Y\mid A\delta_Y f(B)\text{ for all }B\in\sigma_1\}.\]
\end{proposition}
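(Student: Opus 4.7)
The plan is to verify that $\sigma_2$ satisfies the three cluster axioms of Definition \ref{clusters}. Two preliminary facts I will use repeatedly are: (a) monotonicity derived from axiom (\ref{axiom44}) — if $S \subseteq T$, then $U \delta_Y S$ forces $U \delta_Y T$, and dually $U \bar{\delta}_Y T$ forces $U \bar{\delta}_Y S$; and (b) the fact that $X \in \sigma_1$ for any cluster $\sigma_1$ on $X$, obtained from cluster axiom (2) together with $X \delta_X B$ for each $B \in \sigma_1$ (since $X \cap B = B \neq \emptyset$, so axiom (3) of proximity applies).

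For cluster axiom (1), I suppose $A, B \in \sigma_2$ and $A \bar{\delta}_Y B$ for contradiction. The strong axiom produces $E \subseteq Y$ with $A \bar{\delta}_Y E$ and $(Y \setminus E) \bar{\delta}_Y B$. Split $X = f^{-1}(E) \cup (X \setminus f^{-1}(E))$; since $X \in \sigma_1$, cluster axiom (3) of $\sigma_1$ places one of these pieces in $\sigma_1$. If $f^{-1}(E) \in \sigma_1$, then $A \in \sigma_2$ gives $A \delta_Y f(f^{-1}(E))$, and since $f(f^{-1}(E)) \subseteq E$ monotonicity yields $A \delta_Y E$, contradicting $A \bar{\delta}_Y E$. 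The other case is symmetric, using $B \in \sigma_2$ and $f(X\setminus f^{-1}(E)) \subseteq Y \setminus E$. For cluster axiom (2), I first observe that $f(C) \in \sigma_2$ for every $C \in \sigma_1$: for any $D \in \sigma_1$, cluster axiom (1) of $\sigma_1$ gives $C \delta_X D$, and $f$ being a proximity map yields $f(C) \delta_Y f(D)$. Hence if $A \delta_Y B$ for every $B \in \sigma_2$, specializing to $B = f(C)$ for each $C \in \sigma_1$ shows $A \in \sigma_2$.

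The principal obstacle is cluster axiom (3). Suppose $A \cup B \in \sigma_2$ but $A \notin \sigma_2$ and $B \notin \sigma_2$; choose $C, D \in \sigma_1$ with $A \bar{\delta}_Y f(C)$ and $B \bar{\delta}_Y f(D)$, and apply the strong axiom twice to produce $E_1, E_2 \subseteq Y$ satisfying $A \bar{\delta}_Y E_1$, $(Y \setminus E_1) \bar{\delta}_Y f(C)$ (so $f(C) \subseteq E_1$), $B \bar{\delta}_Y E_2$, and $(Y \setminus E_2) \bar{\delta}_Y f(D)$ (so $f(D) \subseteq E_2$). The strategy is to exhibit some $E \in \sigma_1$ with $f(E) \subseteq E_1 \cap E_2$: then monotonicity gives $A \bar{\delta}_Y f(E)$ and $B \bar{\delta}_Y f(E)$, axiom (\ref{axiom44}) gives $(A \cup B) \bar{\delta}_Y f(E)$, contradicting $A \cup B \in \sigma_2$. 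To build $E$, partition $C = (C \cap f^{-1}(E_2)) \cup (C \cap f^{-1}(Y \setminus E_2))$ and apply cluster axiom (3) of $\sigma_1$. If the first piece is in $\sigma_1$, take it as $E$; then $f(E) \subseteq f(C) \cap E_2 \subseteq E_1 \cap E_2$. If instead the second piece $C' := C \cap f^{-1}(Y \setminus E_2)$ lies in $\sigma_1$, then cluster axiom (1) of $\sigma_1$ gives $C' \delta_X D$, hence $f(C') \delta_Y f(D)$; but $f(C') \subseteq Y \setminus E_2$ combined with $(Y \setminus E_2) \bar{\delta}_Y f(D)$ forces $f(C') \bar{\delta}_Y f(D)$ by monotonicity, a contradiction that rules out this case. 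The subtle step is this partition, which is what makes the double application of the strong axiom produce a target $E_1 \cap E_2$ simultaneously separating $A$ and $B$ from images of elements of $\sigma_1$.
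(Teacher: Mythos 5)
Your proof is correct. Note that the paper itself does not supply an argument for Proposition \ref{associated cluster}; it defers entirely to \cite{proximityspaces}, and what you have written is essentially a correct reconstruction of that classical argument: the key devices are exactly the ones used there, namely pulling back a separating set $E$ along $f$ and splitting $X$ (respectively $C$) via cluster axiom (3) of $\sigma_1$, with the observation $f(\sigma_1)\subseteq\sigma_2$ handling cluster axiom (2). All steps check out, including the two places where one must rule out the "wrong" half of a partition lying in $\sigma_1$ and the deduction $f(C)\subseteq E_1$ from $(Y\setminus E_1)\bar{\delta}_Y f(C)$, so there is nothing to fill in.
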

\begin{proof}
See 
 \cite{proximityspaces}.
\end{proof}

Notice that in the setting of the above proposition, it is immediate that the image of any set in $\sigma_1$ is in $\sigma_2$ i.e., $f(\sigma_1) \subseteq \sigma_2.$

\begin{theorem}\label{extension_theorem}
	Let $(X, \delta_X)$ and  $(Y, \delta_Y)$ be separated proximity spaces, and let $(\mathfrak{X}, \delta_X^{*})$ and $(\mathfrak{Y}, \delta_Y^{*})$ be the respective Smirnov compactifications. Let $f: X \to Y$ be a proximity map. Then $f$ extends to a unique proximity map $f^*:\mathfrak{X} \to \mathfrak{Y},$ which is defined using Proposition \ref{associated cluster}.
\end{theorem}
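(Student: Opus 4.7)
The plan is to define $f^* : \mathfrak{X} \to \mathfrak{Y}$ by the cluster formula of Proposition \ref{associated cluster}, namely
\[f^*(\sigma) := \{A \subseteq Y \mid A\,\delta_Y\, f(B) \text{ for all } B \in \sigma\},\]
and to verify that $f^*$ extends $f$, is a proximity map, and is the unique such extension.

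For the extension property, I would check that $f^*(\sigma_x) = \sigma_{f(x)}$ under the identification $x \leftrightarrow \sigma_x$. By Proposition \ref{prop_about_closures}, the point cluster at $x$ equals $\{A \mid x \in cl_X(A)\}$. The inclusion $f^*(\sigma_x) \subseteq \sigma_{f(x)}$ follows by plugging $B = \{x\} \in \sigma_x$ into the defining formula. For the reverse inclusion, $x \in cl_X(B)$ forces $f(x) \in cl_Y(f(B))$ since proximity maps are continuous, so for any $A \in \sigma_{f(x)}$ the point $f(x)$ lies in $cl_Y(A) \cap cl_Y(f(B))$, giving $A\,\delta_Y\, f(B)$ via Proposition \ref{prop_about_closures}.

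The heart of the argument is showing $f^*$ is a proximity map. Assuming for contradiction that $\mathcal{A}\,\delta_X^*\,\mathcal{B}$ while $A', B' \subseteq Y$ absorb $f^*(\mathcal{A}), f^*(\mathcal{B})$ respectively with $A'\,\bar{\delta}_Y\,B'$, I would apply the strong axiom twice: first to $A'\,\bar{\delta}_Y\,B'$ to obtain $E$ with $A'\,\bar{\delta}_Y\,E$ and $(Y \setminus E)\,\bar{\delta}_Y\,B'$, then to $A'\,\bar{\delta}_Y\,E$ to obtain $E_1$ with $A'\,\bar{\delta}_Y\,(Y \setminus E_1)$ and $E_1\,\bar{\delta}_Y\,E$. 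For each $\sigma \in \mathcal{A}$, cluster axiom (3) applied to $X = f^{-1}(E_1) \cup f^{-1}(Y \setminus E_1) \in \sigma$ places one preimage in $\sigma$; the remark after Proposition \ref{associated cluster}, combined with the closure of clusters under supersets, rules out $f^{-1}(Y \setminus E_1) \in \sigma$, since it would force $Y \setminus E_1 \in f^*(\sigma)$ in conflict with $A' \in f^*(\sigma)$ and $A'\,\bar{\delta}_Y\,(Y \setminus E_1)$ via cluster axiom (1). Hence $f^{-1}(E_1)$ absorbs $\mathcal{A}$, and the symmetric argument shows $f^{-1}(E)$ absorbs $\mathcal{B}$. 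If $f^{-1}(E_1)\,\delta_X\,f^{-1}(E)$ held, then the proximity-map property of $f$ together with axiom (4) would yield $E_1\,\delta_Y\,E$; so $f^{-1}(E_1)\,\bar{\delta}_X\,f^{-1}(E)$, contradicting $\mathcal{A}\,\delta_X^*\,\mathcal{B}$.

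Uniqueness is then immediate: $\mathfrak{Y}$ is compact Hausdorff, so any proximity map into it is continuous by Example \ref{unique_proximity}, and continuous maps into a Hausdorff space are determined by their values on the dense subspace $X$. The main obstacle is the proximity-map step, specifically the \emph{double} invocation of the strong axiom: a single application produces only complementary preimages $f^{-1}(E), f^{-1}(Y \setminus E)$ in $X$, which can remain $\delta_X$-close, while the second application yields a genuinely $\bar{\delta}_Y$-separated pair $(E_1, E)$ whose preimages are forced apart by the proximity-map property of $f$.
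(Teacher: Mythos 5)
Your proof is correct. Note that the paper does not actually prove this theorem; it defers entirely to the reference \cite{proximityspaces}, so you have supplied the argument the authors omit. Your argument is the standard one: the key device of applying the strong axiom twice to replace the non-separated complementary pair $E,\,Y\setminus E$ by a genuinely non-close pair $E_1\,\bar{\delta}_Y\,E$ is exactly what the classical treatment packages as a ``separating neighbourhoods'' lemma (given $A\,\bar{\delta}\,B$ there exist $C,D$ with $A\,\bar{\delta}\,(Y\setminus D)$, $B\,\bar{\delta}\,(Y\setminus C)$ and $C\,\bar{\delta}\,D$), and the rest of your argument --- pulling $E_1$ and $E$ back to absorbing sets for $\mathcal{A}$ and $\mathcal{B}$ via cluster axiom $(3)$ applied to $X=f^{-1}(E_1)\cup f^{-1}(Y\setminus E_1)$, then contradicting $E_1\,\bar{\delta}_Y\,E$ through the proximity-map property of $f$ --- matches that treatment step for step. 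Two small points you gloss over but which do hold: the identification $\sigma_x=\{A\mid x\in cl_X(A)\}$ requires the observation that any two sets whose closures contain $x$ are $\delta_X$-close (via Proposition \ref{prop_about_closures}(2)), and the degenerate case $f^{-1}(E_1)=\emptyset$ is harmless because then $f^{-1}(Y\setminus E_1)=X\in\sigma$ and your superset argument already yields the desired contradiction. The uniqueness step is fine, though the continuity of proximity maps with respect to the induced topologies is a general fact rather than a consequence of Example \ref{unique_proximity}.
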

\begin{proof}
The proof of this theorem (with the unnecessary and unused assumption of the surjectivity of $f$) can be found in \cite{proximityspaces}.
\end{proof}

\begin{definition}
	Given a separated proximity space $(X, \delta)$ and its corresponding Smirnov compactification $(\mathfrak{X}, \delta^{*})$, the {\bf Smirnov boundary} of $X$ is the subspace $\mathfrak{X}\setminus X$ (with the subspace proximity inherited from $(\mathfrak{X},\delta^*)$). 
\end{definition}

Now we investigate a few basic properties of the Smirnov boundary.

\begin{proposition}\label{elements of Smirnov boundary don't contain compact sets}
	Let $(X,\delta)$ be a separated proximity space. 
	Let $\sigma$ be an element of $\mathfrak{X}\setminus X$. Then $ K \notin \sigma$ for any compact subset $K$ of $X$.
\end{proposition}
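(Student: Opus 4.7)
The plan is to argue by contradiction: suppose some compact $K \subseteq X$ lies in $\sigma$, and extract from this a point cluster inside $\sigma$ by combining compactness with a finite open cover of $K$.

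First, I would exploit the fact that $\sigma$ is not a point cluster. For each $x \in K$, since $\{x\} \notin \sigma$, the contrapositive of clause (2) in Definition \ref{clusters} yields some $A_{x} \in \sigma$ with $\{x\}\bar{\delta}A_{x}$. Applying the strong axiom to this non-proximity produces a set $E_{x} \subseteq X$ with $\{x\}\bar{\delta}E_{x}$ and $(X\setminus E_{x})\bar{\delta}A_{x}$. By Proposition \ref{prop_about_closures}, $x \notin cl(E_{x})$, so $V_{x} := X\setminus cl(E_{x})$ is an open neighborhood of $x$; and since $V_{x} \subseteq X\setminus E_{x}$, the monotonicity of $\bar{\delta}$ (immediate from axiom (4)) gives $V_{x}\bar{\delta}A_{x}$.

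Next, I would invoke compactness of $K$ to extract a finite subcover $V_{x_{1}},\ldots,V_{x_{n}}$ of $K$ from the open cover $\{V_{x}\}_{x \in K}$, and set $V := V_{x_{1}}\cup\cdots\cup V_{x_{n}}$, so that $K \subseteq V$. I would then show $V \in \sigma$: by clause (1) of Definition \ref{clusters}, $K\delta B$ for every $B \in \sigma$; since $K \subseteq V$, axiom (4) promotes this to $V\delta B$ for every $B\in\sigma$; then clause (2) forces $V \in \sigma$. A finite induction on clause (3) then gives some index $i$ with $V_{x_{i}} \in \sigma$.

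Finally, this conflicts with what we built: $V_{x_{i}}$ and $A_{x_{i}}$ are both in $\sigma$, yet $V_{x_{i}}\bar{\delta}A_{x_{i}}$, violating clause (1). This contradiction shows $K \notin \sigma$. I expect the only slightly delicate step to be the bookkeeping passage from ``$\{x\} \notin \sigma$'' to a genuine open neighborhood of $x$ that is far from some element of $\sigma$; everything else is a direct application of the cluster axioms together with the standard compactness-plus-finite-cover maneuver.
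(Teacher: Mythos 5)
Your proof is correct, but it takes a genuinely different route from the paper's. The paper identifies $K$ with its image (a set of point clusters) in the Smirnov compactification $\mathfrak{X}$, verifies via absorbing sets that $\{\sigma\}\delta^{*}K$, and then uses the fact that $K$ is compact, hence closed, in $\mathfrak{X}$ to force $\sigma$ to be a point cluster based at a point of $K$ --- a contradiction. You instead argue entirely inside $X$: from $\{x\}\notin\sigma$ you extract $A_{x}\in\sigma$ with $\{x\}\bar{\delta}A_{x}$, use the strong axiom and Proposition \ref{prop_about_closures} to produce an open neighborhood $V_{x}=X\setminus cl(E_{x})$ of $x$ with $V_{x}\bar{\delta}A_{x}$, take a finite subcover of $K$, show the union $V$ lies in $\sigma$ (via cluster clauses (1) and (2) and proximity axiom (4)), and then use clause (3) finitely often to land some $V_{x_{i}}$ in $\sigma$ alongside $A_{x_{i}}$, contradicting clause (1). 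Every step checks out; the one implicit ingredient both proofs share is that in a separated proximity space a cluster containing $\{x\}$ must be the point cluster $\sigma_{x}$, which justifies your opening claim that $\{x\}\notin\sigma$ for all $x$. What your approach buys is self-containment and elementarity: it never invokes $\delta^{*}$, absorbing sets, or the topology of $\mathfrak{X}$, only the cluster axioms, the strong axiom, and compactness of $K$ in $X$ itself. What the paper's approach buys is brevity once the Smirnov machinery is already set up, since the closedness of $K$ in $\mathfrak{X}$ does the work of your finite-subcover argument in one stroke.
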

\begin{proof}
	Let $\sigma\in\mathfrak{X}\setminus X$ be given. Assume towards a contradiction that $K\subseteq X$ is a compact set such that $K\in\sigma$.  Identify $K$ with the corresponding set of point clusters. If $B\subseteq X$ absorbs $K$ and $k \in K,$ then $\{k \}\delta B$ (since $\{k \}$ is an element of $\sigma_k$ and $B$ absorbs $\sigma_k$). Consequently, $K \subseteq cl_X(B).$ Also, given any $A\in\sigma$, we must have that $A\delta K$ by the definition of a cluster. Since $K \subseteq cl_X(B)$ for any absorbing set $B$ of $K$, we must have that $A\delta (cl_X(B)).$ Consequently, by Proposition \ref{prop_about_closures}, we have that $A \delta B$ for any $A \in \sigma$ and any set $B$ absorbing $K.$ This yields that $\{\sigma\}\delta^{*}K$ in $\mathfrak{X}$. However, because $K$ is compact in $X$, $K$ is compact in $\mathfrak{X}$, and since compact subsets of Hausdorff spaces are closed, $K$ is closed in $\mathfrak{X}$. Thus, we must have that $\sigma\in K$, which is to say that $\sigma$ is a point cluster based at some point of $K$. This contradicts $\sigma$ being an element of $\mathfrak{X}\setminus X$. 
\end{proof}

\begin{proposition}\label{can subtract by compact sets in cluster}
	Let $(X,\delta)$ be a separated proximity space 
	and $\sigma\in\mathfrak{X}\setminus X$. Then for every $A\in\sigma$ and every compact $K\subseteq X,$ we have that $A\setminus K\in\sigma$.
\end{proposition}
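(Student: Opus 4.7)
The plan is to use the decomposition $A = (A \setminus K) \cup (A \cap K)$, combined with the third axiom of a cluster (if $A \cup B \in \sigma$, then $A \in \sigma$ or $B \in \sigma$), and the previous Proposition \ref{elements of Smirnov boundary don't contain compact sets} to rule out the bad alternative. So the proof will reduce to showing that $A \cap K$ cannot be an element of $\sigma$.

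First, I would write $A = (A \setminus K) \cup (A \cap K)$. Since $A \in \sigma$, axiom (3) of a cluster gives that at least one of $A \setminus K$ or $A \cap K$ lies in $\sigma$. The goal is to exclude the second possibility, after which the first must hold, which is what the proposition claims.

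Next, suppose towards a contradiction that $A \cap K \in \sigma$. I would use axiom (1) of a cluster to get $(A \cap K)\,\delta\,B$ for every $B \in \sigma$. Since $A \cap K \subseteq K$, I can write $K = (A \cap K) \cup (K \setminus (A \cap K))$ and use axiom (\ref{axiom44}) of the proximity to conclude that $K\,\delta\,B$ for every $B \in \sigma$. By axiom (2) of a cluster (the maximality axiom), this forces $K \in \sigma$. However, Proposition \ref{elements of Smirnov boundary don't contain compact sets} says that no compact subset of $X$ belongs to any element of $\mathfrak{X} \setminus X$, so we obtain a contradiction.

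The main (and only nontrivial) step is the passage from $(A \cap K)\,\delta\,B$ to $K\,\delta\,B$, which is just the standard monotonicity of $\delta$ in each argument and follows directly from axiom (\ref{axiom44}). With that in hand the rest is a straightforward application of the cluster axioms together with Proposition \ref{elements of Smirnov boundary don't contain compact sets}, so I do not anticipate any real obstacle.
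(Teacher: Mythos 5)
Your proof is correct and follows essentially the same route as the paper, which simply cites axiom (3) of a cluster together with Proposition \ref{elements of Smirnov boundary don't contain compact sets} and calls the result immediate. You supply the one detail the paper glosses over — that $A \cap K$ need not itself be compact, so one must pass to the superset $K$ via monotonicity of $\delta$ and axiom (2) of a cluster before invoking that proposition — and that step is carried out correctly.
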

\begin{proof}
This follows immediately from axiom $(3)$ in the definition of a cluster and Proposition \ref{elements of Smirnov boundary don't contain compact sets}.
\end{proof}
 
\section{Coarse Proximities}\label{coarse proximities}
In this section, we recall the needed definitions and theorems from \cite{paper1} and \cite{paper2}. To provide the reader with the geometric intuition behind the definitions, most definitions are followed with an example in a metric setting.

\begin{definition}
	Let $X$ be a set. A {\bf bornology} $\mathcal{B}$ (or $\mathcal{B}_X$ if the base space is not clear from the context) is a family of subsets of $X$ satisfying:
	\begin{enumerate}
		\item $\{x\}\in\mathcal{B}$ for all $x\in X,$
		\item $A\in\mathcal{B}$ and $B\subseteq A$ implies $B\in\mathcal{B},$
		\item if $A,B\in\mathcal{B},$ then $A\cup B\in\mathcal{B}.$
	\end{enumerate}
Elements of $\mathcal{B}$ are called {\bf bounded}, and subsets of $X$ not in $\mathcal{B}$ are called {\bf unbounded}. 
\end{definition}

\begin{example}
Let $(X,d)$ be a metric space. Then the collection of all metrically bounded sets is a bornology. We denote this bornology by $\mathcal{B}_d$.
\end{example}

\begin{example}
Let $X$ be a topological space. Then the collection of all precompact sets (i.e., sets whose closure is compact) is a bornology. We denote this bornology by $\mathcal{B}_C$. \end{example}

\begin{definition}\label{coarseproximitydefinition}
Let $X$ be a set equipped with a bornology $\mathcal{B}$. A \textbf{coarse proximity} on $X$ (or $(X, \mathcal{B})$ if the bornology is not clear from the context) is a relation ${\bf b}$ (or ${\bf b}_X$ if the base space is not clear from the context) on the power set of $X$ satisfying the following axioms for all $A,B,C \subseteq X:$

\begin{enumerate}
	\item $A{\bf b}B \implies B{\bf b}A,$ \label{axiom1}
	\item $A{\bf b}B \implies A \notin \mathcal{B}$ and $B \notin \mathcal{B},$ \label{axiom2}
	\item $A\cap B \notin \mathcal{B} \implies A {\bf b} B,$ \label{axiom3}
	\item $(A \cup B){\bf b}C \Longleftrightarrow A{\bf b}C$ or $B{\bf b}C,$ \label{axiom4}
	\item $A\bar{\bf b}B \implies \exists E \subseteq X$ such that $A\bar{\bf b}E$ and $(X\setminus E)\bar{\bf b}B,$ \label{axiom5}
\end{enumerate}
where $A\bar{ {\bf b}}B$ means "$A{\bf b} B$ is not true." If $A {\bf b} B$, then we say that $A$ is \textbf{coarsely close} to (or \textbf{coarsely near}) $B.$ Axiom (\ref{axiom5}) is called the \textbf{strong axiom}. A triple $(X,\mathcal{B},{\bf b})$ where $X$ is a set, $\mathcal{B}$ is a bornology on $X$, and ${\bf b}$ is a coarse proximity on $X,$ is called a {\bf coarse proximity space}.
\end{definition}

Even though the strong axiom was already defined for proximity spaces, the meaning behind the strong axiom will always be clear from the context.

\begin{example}
Let $(X,d)$ be a metric space with the bornology $\mathcal{B}_d.$ For any $A,B \subseteq X$, define $A{\bf b}_d B$ if and only if there exists $\epsilon < \infty$ such that for all bounded sets $D$, there exists $a \in A \setminus D$ and $b \in B \setminus D$ such that $d(a,b) < \epsilon.$  Then ${\bf b}_d$ is a coarse proximity, called the \textbf{metric coarse proximity}. The triple $(X, \mathcal{B}_d, {\bf b}_d)$ is called the \textbf{metric coarse proximity space.}
\end{example}

To define maps between coarse proximity spaces, we need the concept of weak asymptotic resemblance. 

\begin{theorem}
	Let $(X,\mathcal{B},{\bf b})$ be a coarse proximity space. Define $\phi$ (or $\phi_{\bf  b}$ if the coarse proximity is not clear from the context) to be the relation on the power set of $X$ by $A\phi B$ if and only if the following hold:
	\begin{enumerate}
	\item for every unbounded $B^{\prime}\subseteq B$ we have $A{\bf b}B^{\prime},$
	\item for every unbounded $A^{\prime}\subseteq A$ we have $A^{\prime}{\bf b}B.$
	\end{enumerate}	
	Then  $\phi$ is an equivalence relation satisfying 
	\[A\phi B \text{ and } C \phi D \implies (A\cup C) \phi (B \cup D)\]
	for any $A,B,C,D \subseteq X.$ We call this equivalence relation the \textbf{weak asymptotic resemblance} induced by the coarse proximity ${\bf b}$.
\end{theorem}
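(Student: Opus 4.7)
The plan is to first extract a monotonicity lemma for $\mathbf{b}$ from Axiom (\ref{axiom4}): if $A \subseteq A_1$ and $B \subseteq B_1$ and $A \mathbf{b} B$, then $A_1 \mathbf{b} B_1$. This is immediate since $A_1 = A \cup (A_1 \setminus A)$ and Axiom (\ref{axiom4}) gives $A_1 \mathbf{b} B$, then by symmetry and the same trick $A_1 \mathbf{b} B_1$. This lemma will be used repeatedly.

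Reflexivity and symmetry are essentially free: if $A' \subseteq A$ is unbounded then $A' \cap A = A'$ is unbounded so Axiom (\ref{axiom3}) gives $A' \mathbf{b} A$; if $A$ is itself bounded then there are no unbounded subsets and the conditions hold vacuously. Symmetry of $\phi$ follows from symmetry of $\mathbf{b}$ (Axiom (\ref{axiom1})). The compatibility with unions is also routine: given $A\phi B$ and $C\phi D$, an unbounded $B' \subseteq B \cup D$ satisfies $B' = (B' \cap B) \cup (B' \cap D)$, so by the union property of the bornology at least one of these is unbounded; if, say, $B' \cap B$ is unbounded then $A \mathbf{b} (B' \cap B)$, and the monotonicity lemma upgrades this to $(A \cup C) \mathbf{b} B'$. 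The symmetric condition in the definition of $\phi$ is handled identically.

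The main obstacle is transitivity, which is where the strong axiom gets used. Suppose $A\phi B$ and $B\phi C$, and let $C' \subseteq C$ be unbounded; we want $A \mathbf{b} C'$. Assume for contradiction $A \bar{\mathbf{b}} C'$. Apply Axiom (\ref{axiom5}) to get $E \subseteq X$ with $A \bar{\mathbf{b}} E$ and $(X \setminus E) \bar{\mathbf{b}} C'$. Since $B\phi C$ and $C'$ is unbounded, $B \mathbf{b} C'$; writing $B = (B \cap E) \cup (B \setminus E)$, Axiom (\ref{axiom4}) forces $(B \cap E) \mathbf{b} C'$ or $(B \setminus E) \mathbf{b} C'$. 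In the first case $B \cap E$ is unbounded (Axiom (\ref{axiom2})) and $B \cap E \subseteq B$, so $A\phi B$ gives $A \mathbf{b} (B \cap E)$; monotonicity then yields $A \mathbf{b} E$, contradicting $A \bar{\mathbf{b}} E$. In the second case $B \setminus E \subseteq X \setminus E$, so monotonicity applied to $(X \setminus E) \bar{\mathbf{b}} C'$ contradicts $(B \setminus E) \mathbf{b} C'$. The symmetric condition "for every unbounded $A' \subseteq A$, $A' \mathbf{b} C$" is proved by the mirror-image argument, swapping the roles of the two sides of the strong axiom.

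The only real subtlety is ensuring at each step that the set one discards via monotonicity is in fact a subset of the $E$ (or $X\setminus E$) produced by the strong axiom, and that the unboundedness hypotheses needed to invoke $A\phi B$ or $B\phi C$ are supplied by Axiom (\ref{axiom2}). Apart from the transitivity argument, every step reduces to a one-line application of Axioms (\ref{axiom1})--(\ref{axiom4}) together with the monotonicity lemma, so the proof should fit comfortably on a single page.
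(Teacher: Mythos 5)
Your proof is correct and complete: the monotonicity lemma extracted from Axiom (4), the vacuous handling of bounded sets, the bornology argument for unions, and the strong-axiom decomposition of $B$ into $B\cap E$ and $B\setminus E$ for transitivity (together with its mirror image for the second condition) all check out. The paper itself gives no proof of this theorem, deferring entirely to the reference \cite{paper1}, so there is nothing here to compare against; your argument is the natural self-contained one.
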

\begin{proof}
See \cite{paper1}.
\end{proof}

The above definition of the weak asymptotic resemblance is a specification of a more general notion of weak asymptotic resemblance that can be found in literature (see for example Definition 6.5 in \cite{paper1}). 

We will later show that the relation ${\bf b}$ captures "closeness at infinity" (see Proposition \ref{coarsely close sets have intersecting trace}), and the relation $\phi$ captures "equality at infinity" (see Proposition \ref{equality_at_infinity}).

\begin{example}
Let $(X, \mathcal{B}_d, {\bf b}_d)$ be a metric coarse proximity space. Then the weak asymptotic resemblance induced by ${\bf b}_d$ is denoted by $\phi_d$. It can be shown (see \cite{paper1}) that for nonempty $A,B \subseteq X$,
\[A\phi_d B \Longleftrightarrow \text{$A$ and $B$ have finite Hausdorff distance}.\]
\end{example}

\begin{definition}
	Let $(X,\mathcal{B}_X,{\bf b}_X)$ and $(Y,\mathcal{B}_Y,{\bf b}_Y)$ be coarse proximity spaces. Let $f:X\rightarrow Y$ be a function. Then $f$ is a \textbf{coarse proximity map} provided that the following are satisfied for all $A,B \subseteq X$:
\begin{enumerate}
\item $B\in\mathcal{B}_X \implies f(B)\in\mathcal{B}_Y,$ \label{item11}
\item $A{\bf b}_XB \implies f(A){\bf b}_Yf(B).$ \label{item22}
\end{enumerate}
\end{definition}
Notice that item \ref{item22} from the above definition implies that item \ref{item11} can be strengthened to 
\[B\in\mathcal{B}_X \Longleftrightarrow f(B)\in\mathcal{B}_Y.\]
\begin{definition}\label{coarsecloseness}
	Let $X$ be a set and $(Y,\mathcal{B},{\bf b})$ a coarse proximity space. Two functions $f,g:X \to Y$ are {\bf close} if for all $A \subseteq X$
	\[f(A)\phi_{\bf b} g(A).\]
\end{definition}

\begin{definition}
Let $(X,\mathcal{B}_X,{\bf b}_X)$ and $(Y,\mathcal{B}_Y,{\bf b}_Y)$ be coarse proximity spaces. We call a coarse proximity map $f: X \to Y$ a \textbf{coarse proximity isomorphism} if there exists a coarse proximity map $g:Y\to X$ such that $g\circ f$ is close to $id_{X}$ and $f\circ g$ is close to $id_{Y}.$ We say that $(X,\mathcal{B}_X,{\bf b}_X)$ and $(Y,\mathcal{B}_Y,{\bf b}_Y)$ are \textbf{coarse proximity isomorphic} (or just \textbf{isomorphic}) if there exists a coarse proximity isomorphism $f:X \to Y.$
\end{definition}

The collection of coarse proximity spaces and closeness classes of coarse proximity maps makes up the category {\bf CrsProx} of coarse proximity spaces. For details, see \cite{paper1}.

\section{Discrete Extensions and Boundaries of Coarse Proximity Spaces}\label{discrete extensions and boundaries of coarse proximity spaces}

In this section, we introduce discrete extensions of coarse proximities. We use them to define the boundaries of coarse proximity spaces. We also prove a few properties of such boundaries. In particular, we show that every such boundary is compact and Hausdorff. Finally, we show the existence of a nontrivial functor from the category of coarse proximity spaces (with closeness classes of coarse proximity maps) to the category of compact separated proximity spaces (with proximity maps). To learn more about the category of coarse proximity spaces, the reader is referred to Section 7 in \cite{paper1}.

The following definition has been inspired by the definition of the Higson proximity of a proper metric space (see Definition \ref{higson proximity}).

\begin{proposition}\label{induced proximity}
	Let $(X,\mathcal{B},{\bf b})$ be a coarse proximity space. Define a binary relation $\delta_{dis}$ (or $\delta_{dis,{\bf b}}$ if the coarse proximity is not clear from the context) on the power set of $X$ by
	\[A\delta_{dis} B \quad \Longleftrightarrow \quad A\cap B\neq\emptyset  \text{ or } A{\bf b}B.\]
Then $\delta_{dis}$ is a separated proximity on $X.$ We will call this proximity the {\bf discrete extension} of ${\bf b}$ and the space $(X, \delta_{dis})$ the \textbf{discrete extension} of $(X,\mathcal{B},{\bf b})$.
\end{proposition}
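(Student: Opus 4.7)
The plan is to verify the five proximity axioms together with the separatedness condition, by bootstrapping each proximity axiom off the corresponding coarse proximity axiom for ${\bf b}$ and handling the intersection disjunct by direct set-theoretic manipulation. Axioms (1)--(4) of a proximity are essentially immediate: symmetry follows from symmetry of both $\cap$ and ${\bf b}$; nonemptiness follows because $A\cap B\neq\emptyset$ already forces nonemptiness, while $A{\bf b}B$ forces $A,B\notin\mathcal{B}$ (so in particular nonempty, since singletons lie in $\mathcal{B}$); axiom (3) is immediate from the definition; and the union axiom (4) follows by combining distributivity of intersection over union with axiom (\ref{axiom4}) for ${\bf b}$.

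Separatedness I would handle last as a short remark: if $\{x\}\delta_{dis}\{y\}$ then either $\{x\}\cap\{y\}\neq\emptyset$, giving $x=y$, or $\{x\}{\bf b}\{y\}$, which is impossible since $\{x\},\{y\}\in\mathcal{B}$ contradicts axiom (\ref{axiom2}) for ${\bf b}$.

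The main obstacle is the strong axiom (\ref{axiom55}). Here one only has the strong axiom for ${\bf b}$, which produces a set $E_0$ with $A\bar{{\bf b}}E_0$ and $(X\setminus E_0)\bar{{\bf b}}B$, but this set may fail to witness the disjointness conditions needed for $\delta_{dis}$. My plan is to modify $E_0$ by setting
\[
E := (E_0 \cup B)\setminus A.
\]
Starting from the hypothesis $A\bar{\delta}_{dis}B$, which unpacks as $A\cap B=\emptyset$ and $A\bar{{\bf b}}B$, I would verify four things about $E$:
\begin{enumerate}
\item $A\cap E=\emptyset$ by construction;
\item $B\subseteq E$ (using $A\cap B=\emptyset$), which gives $(X\setminus E)\cap B=\emptyset$;
\item $A\bar{{\bf b}}E$: since $E\subseteq E_0\cup B$, monotonicity of ${\bf b}$ derived from axiom (\ref{axiom4}) together with $A\bar{{\bf b}}E_0$ and $A\bar{{\bf b}}B$ yields $A\bar{{\bf b}}(E_0\cup B)$, and hence $A\bar{{\bf b}}E$;
\item $(X\setminus E)\bar{{\bf b}}B$: one writes $X\setminus E = (X\setminus(E_0\cup B))\cup A$, observes $X\setminus(E_0\cup B)\subseteq X\setminus E_0$, and applies axiom (\ref{axiom4}) together with $(X\setminus E_0)\bar{{\bf b}}B$ and $A\bar{{\bf b}}B$.
\end{enumerate}
Combining (1)--(4) gives $A\bar{\delta}_{dis}E$ and $(X\setminus E)\bar{\delta}_{dis}B$, establishing the strong axiom for $\delta_{dis}$.

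The only subtle point is the monotonicity of ${\bf b}$ under subsets used in items (3) and (4), which is not stated as a separate axiom but is an easy consequence of axiom (\ref{axiom4}) (if $A{\bf b}E$ and $E\subseteq F$, then $A{\bf b}(E\cup(F\setminus E))=A{\bf b}F$), so I would either cite this or include a one-line observation before the main verification.
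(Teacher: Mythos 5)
Your proposal is correct and takes essentially the same approach as the paper: your witness $E=(E_0\cup B)\setminus A$ is exactly the set the paper constructs in two steps (first deleting $A$, then adjoining $B$), and your verification via monotonicity and axiom (\ref{axiom4}) is a sound, slightly more direct substitute for the paper's observation that the $\bar{\bf b}$ relations persist under modification by bounded sets.
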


\begin{proof}
All the axioms besides the strong axiom are immediate. To prove the strong axiom, let $A \bar{\delta}_{dis} B.$ Then $A \cap B = \emptyset$ and $A \bar{\bf b} B.$ Consequently, there exists $E \subseteq X$ such that $A \bar{\bf b} E$ and $(X \setminus E) \bar{\bf b} B.$ Since $A \cap E$ is bounded, the set $E'= E \setminus A$ is disjoint from $A$, and we still have $A \bar{\bf b} E'$ and $(X \setminus E') \bar{\bf b} B.$ Since $B$ is contained in $E'$ up to a bounded set, $B \setminus E'$ is bounded. Consequently, $E''=E' \cup (B \setminus E')$ is still disjoint from $A$ (we added a subset of $B$, which does not intersect $A$), and we still have that $A \bar{\bf b} E''$ and $(X \setminus E'') \bar{\bf b} B.$ By construction, $E''$ fully contains $B,$ i.e., $X \setminus E''$ does not intersect $B.$ In conclusion, we found $E''$ such that $A \cap E'' = \emptyset,$ $A \bar{\bf b} E'',$ $(X \setminus E'') \cap B = \emptyset,$ and $(X \setminus E'') \bar{\bf b} B.$ This means that $A \bar{\delta}_{dis} E''$ and $(X \setminus E'') \bar{\delta}_{dis} B,$ which completes the proof of the strong axiom.
\end{proof}

Now we are ready to define boundaries of coarse proximity spaces.
\begin{definition}
Let $(X,\mathcal{B},{\bf b})$ be a coarse proximity space. Let $\mathfrak{X}$ be the Smirnov compactification induced by $\delta_{dis}$.  Define $\mathcal{U}X\subseteq\mathfrak{X}$ (or $\mathcal{U}_{\bf b}X$ if the coarse proximity inducing the discrete extension is not clear from the context) to be the set of clusters in $(X,\delta_{dis})$ that do not contain any bounded sets. In particular, $\mathcal{U}X \subseteq \mathfrak{X} \setminus X.$ The space $\mathcal{U}X$ (equipped with the subspace proximity inherited from $\mathfrak{X}$) is called the \textbf{boundary of the coarse proximity space} $(X,\mathcal{B},{\bf b}).$
\end{definition}

Given a coarse proximity space $(X,\mathcal{B},{\bf b})$, we will always assume that $(\mathfrak{X}, \delta_{dis}^*)$ (or just $\mathfrak{X}$) denotes the Smirnov compactification induced by $\delta_{dis}$. To show that the boundary of a coarse proximity space is compact and Hausdorff, we need the following lemmas.

\begin{lemma}\label{clusters realized by coarse proximity}
Let $(X,\mathcal{B},{\bf b})$ be a coarse proximity space. Let $\sigma$ be an arbitrary element of $\mathcal{U}X.$ Then for any $A,B \in \sigma,$ we have $A{\bf b}B.$
\end{lemma}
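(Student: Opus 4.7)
The plan is to reduce the problem to the definition of $\delta_{dis}$ by showing that if $A, B \in \sigma$, then $\sigma$ also contains disjoint pieces of $A$ and $B$ that must then be coarsely close. I would split into two cases based on whether $A \cap B$ is bounded. If $A \cap B \notin \mathcal{B}$, axiom (3) of coarse proximity immediately yields $A {\bf b} B$, handling that case with no further work.

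The substantive case is when $A \cap B \in \mathcal{B}$. Because $\sigma \in \mathcal{U}X$ contains no bounded sets, $A \cap B \notin \sigma$. I would then decompose $A = (A \setminus B) \cup (A \cap B)$ and apply the third cluster axiom: since $A \in \sigma$ but $A \cap B \notin \sigma$, it follows that $A \setminus B \in \sigma$. The symmetric argument gives $B \setminus A \in \sigma$. Now cluster axiom (1) forces $(A \setminus B) \delta_{dis} (B \setminus A)$, and since these two sets are disjoint by construction, the definition of $\delta_{dis}$ collapses to $(A \setminus B) {\bf b} (B \setminus A)$. Finally, monotonicity of ${\bf b}$, obtained by applying coarse proximity axiom (4) to $A = (A \setminus B) \cup (A \cap B)$ and then to $B = (B \setminus A) \cup (A \cap B)$, upgrades this to $A {\bf b} B$.

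I do not expect a significant obstacle. The whole argument is essentially a bookkeeping exercise in the axioms: the content lies entirely in the observation that $\mathcal{U}X$-clusters cannot contain the bounded piece $A \cap B$, which lets the cluster be witnessed by genuinely disjoint subsets, at which point $\delta_{dis}$ is forced to take its non-intersection clause. The only mild care needed is verifying the monotonicity step cleanly from axiom (4), rather than slipping into an unjustified "subset" implication.
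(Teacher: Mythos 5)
Your proposal is correct and follows essentially the same route as the paper: case on whether $A\cap B$ is bounded, use the third cluster axiom together with the fact that $\sigma$ contains no bounded sets to replace $A$ (and in your version also $B$) by a disjoint piece still in $\sigma$, let the definition of $\delta_{dis}$ force the ${\bf b}$ clause, and recover $A{\bf b}B$ via axiom (4). The only cosmetic difference is that the paper trims just $A$ to $A\setminus(A\cap B)$ and pairs it with $B$ itself, while you symmetrically trim both sets; either works.
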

\begin{proof}
Since $A$ and $B$ are in $\sigma,$ we know that $A \delta_{dis} B.$ Then $A \cap B \neq \emptyset$ or $A{\bf b} B.$ If $A \cap B$ is unbounded, then clearly $A {\bf b} B.$ Suppose that $A \cap B$ is bounded. By axiom 3 of a cluster, we know that $A\setminus (A \cap B)$ is in $\sigma$ or $A \cap B$ is in $\sigma.$ However, $A \cap B$ is bounded, so it cannot belong to $\sigma.$ Thus, it has to be that  $A\setminus (A \cap B)$ is in $\sigma.$ But then $(A\setminus (A \cap B)) \delta_{dis} B$ (since $A\setminus (A \cap B)$ is in $\sigma$ and $B$ is in $\sigma$). Since these sets are disjoint, this means that $(A\setminus (A \cap B)) {\bf b} B.$ Since $A \cap B$ is bounded, this in turn implies that $A{ \bf b}B.$
\end{proof}

\begin{lemma}\label{phi_relation_closed}
Let $(X,\mathcal{B},{\bf b})$ be a coarse proximity space. Then every element of $\mathcal{U}X$ is closed under the weak asymptotic resemblance $\phi$ induced by ${\bf b}$. In other words, if $A \in \sigma \in \mathcal{U}X,$ then for every $B$ such that $A \phi B,$ we have $B \in \sigma.$
\end{lemma}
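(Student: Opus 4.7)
The plan is to use axiom~(2) in the definition of a cluster: to show $B \in \sigma$, it suffices to prove that $B\,\delta_{dis}\,C$ for every $C \in \sigma$. Fix an arbitrary $C \in \sigma$. Since $A, C \in \sigma \in \mathcal{U}X$, Lemma \ref{clusters realized by coarse proximity} gives $A\,{\bf b}\,C$. The entire task thereby reduces to the following auxiliary claim, which I would establish first: if $A \phi B$ and $A\,{\bf b}\,C$, then $B\,{\bf b}\,C$ (and in particular $B\,\delta_{dis}\,C$).

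For the auxiliary claim I would argue by contradiction using the strong axiom. Assume $B\,\bar{\bf b}\,C$, and apply axiom~(\ref{axiom5}) of Definition \ref{coarseproximitydefinition} to produce $E \subseteq X$ with $B\,\bar{\bf b}\,E$ and $(X \setminus E)\,\bar{\bf b}\,C$. Decompose $A = (A \cap E) \cup (A \setminus E)$. Since $A \setminus E \subseteq X \setminus E$, the monotonicity of $\bar{\bf b}$ under subsets (an immediate consequence of axiom~(\ref{axiom4})) yields $(A \setminus E)\,\bar{\bf b}\,C$. Combining this with $A\,{\bf b}\,C$ and axiom~(\ref{axiom4}) forces $(A \cap E)\,{\bf b}\,C$; in particular $A \cap E$ is unbounded. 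The second condition in the definition of $\phi$ (every unbounded subset of $A$ is coarsely close to $B$) now gives $(A \cap E)\,{\bf b}\,B$. However, $A \cap E \subseteq E$ together with $B\,\bar{\bf b}\,E$ and the same monotonicity yield $(A \cap E)\,\bar{\bf b}\,B$, a contradiction. This proves the auxiliary claim.

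Applying the auxiliary claim to each $C \in \sigma$ produces $B\,\delta_{dis}\,C$ for every $C \in \sigma$, and cluster axiom~(2) then delivers $B \in \sigma$, as required. The main obstacle is the auxiliary claim itself: a naive attempt to derive $B\,{\bf b}\,C$ directly from $A \phi B$ and $A\,{\bf b}\,C$ fails, because $\phi$ only relates subsets of $A$ to $B$ and not to a third set $C$. The strong axiom is what allows one to manufacture the separating set $E$, whose interaction with $A\,{\bf b}\,C$ forces the unbounded "witnessing" piece of $A$ to lie inside $E$, precisely the region where $B$ is coarsely far from everything, producing the contradiction.
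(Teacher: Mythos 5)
Your proof is correct, and its overall skeleton matches the paper's: both reduce the problem via cluster axiom (2) to showing $B\,\delta_{dis}\,C$ for every $C\in\sigma$, and both obtain $A\,{\bf b}\,C$ from Lemma \ref{clusters realized by coarse proximity}. The difference lies in how the key implication ($A\phi B$ and $A\,{\bf b}\,C$ imply $B\,{\bf b}\,C$) is handled: the paper simply cites Corollary 6.18 of \cite{paper1}, which states that $\phi$-related sets are coarsely close to exactly the same subsets of $X$, whereas you prove this fact from scratch. Your argument for the auxiliary claim is sound: the strong axiom produces the separating set $E$, monotonicity of $\bar{\bf b}$ (via axiom (\ref{axiom4})) kills $(A\setminus E)\,{\bf b}\,C$, axiom (\ref{axiom4}) then forces $(A\cap E)\,{\bf b}\,C$ so that $A\cap E$ is unbounded by axiom (\ref{axiom2}), and the second clause in the definition of $\phi$ collides with $B\,\bar{\bf b}\,E$. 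What your version buys is self-containment --- the lemma is derived entirely from the axioms of Definition \ref{coarseproximitydefinition} and the definition of $\phi$, with no external reference --- at the cost of a somewhat longer argument; the paper's version is shorter but leans on a result the reader must retrieve from \cite{paper1}. Either is acceptable, and your reconstruction of the cited corollary is essentially the standard proof of that corollary anyway.
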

\begin{proof}
Let $\sigma$ be an element of $\mathcal{U}X,$ $A$ an element of $\sigma,$ and $B$ a subset of $X$ such that $A \phi B.$ We need to show that $B \in \sigma.$ Notice that since $A \phi B$ and $A$ is unbounded, we know that $B$ is unbounded. We are going to utilize axiom 2 of a cluster. Let $C$ be an arbitrary element of $\sigma.$ We want to show that $B{\bf b} C,$ and consequently $B \delta_{dis} C.$ We know that $A \delta_{dis} C$ (because $A$ and $C$ belong to the same cluster $\sigma$). By the previous lemma, this means that $A{\bf b}C.$ Since $A$ and $B$ are $\phi$ related, they are coarsely close to the same subsets of $X$ (see Corollary 6.18 in \cite{paper1}). Thus, it has to be true that $B{\bf b}C,$ and consequently, $B \delta_{dis} C.$ Thus, we have shown that for an arbitrary element $C$ of $\sigma,$ we have $B \delta_{dis} C.$ By axiom 2 of a cluster, this shows that $B$ is in $\sigma.$
\end{proof}

\begin{proposition}
	For every coarse proximity space $(X,\mathcal{B},{\bf b}),$ the space $\mathcal{U}X$ is compact and Hausdorff.
\end{proposition}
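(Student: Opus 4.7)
The plan is to realize $\mathcal{U}X$ as a closed subspace of the Smirnov compactification $\mathfrak{X}$ of $(X,\delta_{dis})$. Since $\mathfrak{X}$ is compact Hausdorff, subspaces of $\mathfrak{X}$ are Hausdorff and closed subspaces are compact, so both conclusions will follow once closedness is established. Hausdorffness of $\mathcal{U}X$ is therefore immediate and I will concentrate on showing that $\mathcal{U}X$ is closed in $\mathfrak{X}$.

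For closedness, I will fix an arbitrary $\sigma\in\mathfrak{X}\setminus\mathcal{U}X$; by definition this yields a bounded subset $B\subseteq X$ with $B\in\sigma$. I will produce a Smirnov-proximity witness showing $\sigma\notin cl_{\mathfrak{X}}(\mathcal{U}X)$. The key geometric observation is that $X\setminus B$ belongs to every cluster $\tau\in\mathcal{U}X$. To justify this, note first that $X\in\tau$ for every cluster $\tau$: every element of $\tau$ is a nonempty subset of $X$, hence intersects $X$, so $X$ is $\delta_{dis}$-close to every element of $\tau$, and axiom (2) of a cluster then forces $X\in\tau$. Applying axiom (3) of clusters to the decomposition $X=B\cup(X\setminus B)$ next gives $B\in\tau$ or $X\setminus B\in\tau$, and since $\tau\in\mathcal{U}X$ contains no bounded set we must have $X\setminus B\in\tau$.

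With this in hand, the pair $(B,X\setminus B)$ will supply the required separation. The set $B$ absorbs $\{\sigma\}$, and by the preceding claim $X\setminus B$ absorbs $\mathcal{U}X$. These two sets are disjoint; moreover, $B$ is bounded, so axiom (2) of a coarse proximity forbids $B\,{\bf b}\,(X\setminus B)$. Together these give $B\,\bar{\delta}_{dis}\,(X\setminus B)$, which by the definition of the Smirnov proximity implies $\{\sigma\}\,\bar{\delta}_{dis}^{*}\,\mathcal{U}X$. Invoking Theorem \ref{closure} for $(\mathfrak{X},\delta_{dis}^{*})$ then yields $\sigma\notin cl_{\mathfrak{X}}(\mathcal{U}X)$, and since $\sigma$ was arbitrary outside $\mathcal{U}X$, the set $\mathcal{U}X$ is closed in $\mathfrak{X}$, hence compact.

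The only non-routine step I foresee is exhibiting a single subset of $X$ that absorbs the entire family $\mathcal{U}X$ at once, but the complementary-pair device $X=B\cup(X\setminus B)$ together with cluster axiom (3) handles this uniformly. No serious obstacle appears; the simplicity of the argument reflects the fact that the discrete extension makes the bornology itself responsible for detecting which clusters lie on the boundary.
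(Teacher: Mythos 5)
Your proposal is correct and follows the same strategy as the paper: both reduce to showing $\mathcal{U}X$ is closed in $\mathfrak{X}$ and both use the absorbing pair $B$ (for $\{\sigma\}$) and $X\setminus B$ (for $\mathcal{U}X$) together with $B\,\bar{\delta}_{dis}\,(X\setminus B)$ to separate them. The only difference is a single sub-step: you obtain $X\setminus B\in\tau$ for every $\tau\in\mathcal{U}X$ directly from cluster axiom (3) applied to $X=B\cup(X\setminus B)$, whereas the paper routes this through Lemma \ref{phi_relation_closed} and the relation $(X\setminus B)\,\phi\, X$ --- your version is slightly more elementary and equally valid.
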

\begin{proof}
	First note that if $X$ is bounded, then $\mathcal{U}X$ is empty. Thus, assume that $X$ is unbounded. Because $\mathfrak{X}$ is a compact Hausdorff space and $\mathcal{U}X\subseteq\mathfrak{X}$ it will suffice to show that $\mathcal{U}X$ is closed in $\mathfrak{X}$. By Proposition \ref{prop_about_closures}, it suffices then to show that if $\sigma\in\mathfrak{X}\setminus\mathcal{U}X,$ then $\{\sigma\}$ is not close to $\mathcal{U}X$ in $\mathfrak{X}$. Suppose $\sigma\in\mathfrak{X}\setminus\mathcal{U}X.$ This means that there is some bounded set $B\subseteq X$ such that $B\in\sigma$. Let $\phi$ be the weak asymptotic resemblance induced by ${\bf b}$. Since $B$ is bounded, $(X\setminus B)\phi X.$ By Lemma \ref{phi_relation_closed}, every element of $\mathcal{U}X$ is closed under the $\phi$ relation. Because $X$ is an element of each cluster of $X$ we have that the set $X\setminus B$ belongs to each element of $\mathcal{U}X$. Because $B\bar{\delta}_{dis}(X\setminus B)$, $B$ absorbs $\{\sigma\}$, and $X\setminus B$ absorbs $\mathcal{U}X$ we have that $\{\sigma\}$ is not close to $\mathcal{U}X$ in the Smirnov compactification. Therefore, $\mathcal{U}X$ is closed in $\mathfrak{X},$ and is consequently compact and Hausdorff.
\end{proof}

For the reminder of this section, for any $A \subseteq X$, $tr(A)$ will denote $tr_{\mathfrak{X}, \mathcal{U}X}(A),$ as defined in the introduction.

What follows are some basic facts regarding the boundary of a coarse proximity space.

\begin{proposition}\label{nonempty trace}
	Let $(X,\mathcal{B},{\bf b})$ be a coarse proximity space, and $A\subseteq X.$ Then 
\begin{enumerate}
\item $tr(A) = \emptyset$ if $A$ is bounded, \label{item1}
\item $tr(A) \neq \emptyset$ if $A$ is unbounded. \label{item2}
\end{enumerate}
\end{proposition}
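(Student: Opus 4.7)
The plan is to split the proof by cases. For (1), the argument is immediate from the definitions: since $cl_{\mathfrak{X}}(A) = \{\sigma \in \mathfrak{X} \mid A \in \sigma\}$ and no element of $\mathcal{U}X$ contains any bounded set, we have $tr(A) = cl_{\mathfrak{X}}(A) \cap \mathcal{U}X = \emptyset$ whenever $A$ is bounded.

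For (2), the approach is a finite-intersection-property argument in the compact space $\mathfrak{X}$. A useful preliminary is the easy fact that clusters in any proximity space are closed under supersets: if $B \in \sigma$ and $B \subseteq B'$, then for every $C \in \sigma$ one has $B \delta_{dis} C$, and since $B \cup B' = B'$, axiom (\ref{axiom44}) of proximity gives $B' \delta_{dis} C$, so $B' \in \sigma$ by axiom $(2)$ of a cluster. With this in hand, I would consider the family
\[ \mathcal{F} = \{ cl_{\mathfrak{X}}(A \setminus K) \mid K \in \mathcal{B} \} \]
of closed subsets of $\mathfrak{X}$. Each $A \setminus K$ is unbounded (hence nonempty) since $A$ is, so any point cluster $\sigma_x$ with $x \in A \setminus K$ contains $A \setminus K$ by the superset property and lies in $cl_{\mathfrak{X}}(A \setminus K)$; thus each member of $\mathcal{F}$ is nonempty. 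For finite intersections, given $K_1, \dots, K_n \in \mathcal{B}$ and $K = K_1 \cup \cdots \cup K_n \in \mathcal{B}$, the inclusions $A \setminus K \subseteq A \setminus K_i$ combined with the superset property give $cl_{\mathfrak{X}}(A \setminus K) \subseteq \bigcap_i cl_{\mathfrak{X}}(A \setminus K_i)$, so $\mathcal{F}$ has the finite intersection property.

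By compactness of $\mathfrak{X}$, there exists a cluster $\sigma \in \bigcap_{K \in \mathcal{B}} cl_{\mathfrak{X}}(A \setminus K)$. Fixing any $K_0 \in \mathcal{B}$, this $\sigma$ contains $A \setminus K_0$, and since $A \setminus K_0 \subseteq A$, the superset property gives $A \in \sigma$, so $\sigma \in cl_{\mathfrak{X}}(A)$. To finish, I would verify $\sigma \in \mathcal{U}X$ by contradiction: if some bounded $B \in \sigma$ existed, then also $A \setminus B \in \sigma$, hence $(A \setminus B) \delta_{dis} B$; since these sets are disjoint, this forces $(A \setminus B) {\bf b} B$, contradicting axiom (\ref{axiom2}) of a coarse proximity since $B$ is bounded. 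Thus $\sigma \in tr(A)$.

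The main obstacle, conceptually, is simply locating the right family of closed sets to intersect; once one observes that $\{cl_{\mathfrak{X}}(A \setminus K)\}_{K \in \mathcal{B}}$ encodes the idea that an unbounded set "survives" the removal of any bounded piece, the compactness argument is essentially forced. All other steps reduce to routine bookkeeping with the cluster axioms and the definition of $\delta_{dis}$.
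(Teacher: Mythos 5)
Your proposal is correct and follows essentially the same route as the paper: part (1) from the characterization of $cl_{\mathfrak{X}}(A)$ together with the fact that clusters in $\mathcal{U}X$ contain no bounded sets, and part (2) via the finite intersection property of the family $\{cl_{\mathfrak{X}}(A\setminus K)\mid K\in\mathcal{B}\}$ in the compact space $\mathfrak{X}$, followed by the same disjointness contradiction to show the resulting cluster lies in $\mathcal{U}X$. The only difference is that you spell out the superset property of clusters and the verification of the finite intersection property, which the paper treats as immediate.
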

\begin{proof}
To see (\ref{item1}), let $A$ be bounded. Then $A \in \sigma$ for any $\sigma \in cl_{\mathfrak{X}}(A)$ (it is because $cl_{\mathfrak{X}}(A)=\{ \sigma \in \mathfrak{X} \mid A \in \sigma\}$). Consequently, $tr(A)=cl_{\mathfrak{X}}(A)\cap\mathcal{U}X= \emptyset.$ To see (\ref{item2}), notice that for each $D\in\mathcal{B},$ we have that $A\setminus D$ is unbounded. The collection $\mathcal{A}=\{cl_{\mathfrak{X}}(A\setminus D)\mid D\in\mathcal{B}\}$ is the collection of nonempty closed sets in $\mathfrak{X}$ that trivially has the finite intersection property, and thus $\bigcap\mathcal{A}\neq\emptyset$. Let $\sigma$ be a point in this intersection. Clearly $\sigma \in cl_{\mathfrak{X}}(A)=cl_{\mathfrak{X}}(A\setminus \emptyset).$ Also, $\sigma$ must be an element of $\mathcal{U}X.$ To see that, for contradiction assume that there exists $C\in\mathcal{B}$ such that $C\in\sigma.$ Since $\sigma\in cl_{\mathfrak{X}}(A\setminus D)$ for each $D\in\mathcal{B},$ we must have that $A\setminus D \in \sigma$ for each $D\in\mathcal{B}$. Consequently, $C\delta_{dis}(A\setminus D)$ for each $D\in\mathcal{B}.$ 
In particular, $C\delta_{dis}(A\setminus C).$ However, this is a contradiction, since $C$ is bounded and $A\setminus C$ is unbounded (and thus $C \bar{\bf b} (A\setminus C)$) and $C\cap(A\setminus C)=\emptyset.$ Therefore, $tr(A)\neq\emptyset$.
\end{proof}

The following proposition explains the intuitive notion of coarse proximities capturing "closeness at infinity." In particular, it shows that two subsets of a coarse proximity space are coarsely close if and only if they are "close at the boundary."

\begin{proposition}\label{coarsely close sets have intersecting trace}
	Let $(X,\mathcal{B},{\bf b})$ be a coarse proximity space. Let $A$ and $B$ be subsets of $X.$ Then $A{\bf b}B$ if and only if $tr(A)\cap tr(B)\neq\emptyset$.
\end{proposition}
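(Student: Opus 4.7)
The plan is to prove both directions, with the forward direction being the substantial one. For the reverse direction, if $\sigma \in tr(A) \cap tr(B)$, then $\sigma \in \mathcal{U}X$ with both $A, B \in \sigma$, and Lemma \ref{clusters realized by coarse proximity} applied to $\sigma$ immediately gives $A{\bf b}B$.

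For the forward direction, assume $A{\bf b}B$. The goal is to produce a cluster $\sigma \in \mathcal{U}X$ containing both $A$ and $B$, which will give the desired element of $tr(A) \cap tr(B)$. The key preliminary observation is that coarse closeness is stable under deletion of bounded subsets: if $D, D' \in \mathcal{B}$, then $(A \setminus D){\bf b}(B \setminus D')$. Indeed, writing $A = (A \setminus D) \cup (A \cap D)$ and using axiom (\ref{axiom4}) of coarse proximity together with the fact that $A \cap D \in \mathcal{B}$ cannot be coarsely close to anything (axiom (\ref{axiom2})), we get $(A \setminus D) {\bf b} B$; applying the same argument on the $B$ side yields $(A \setminus D){\bf b}(B \setminus D')$. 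In particular, $(A \setminus D) \delta_{dis} (B \setminus D')$ for all bounded $D, D'$.

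The next step is to apply compactness. Consider the family
\[\mathcal{F} = \{cl_{\mathfrak{X}}(A \setminus D) \mid D \in \mathcal{B}\} \cup \{cl_{\mathfrak{X}}(B \setminus D) \mid D \in \mathcal{B}\}\]
of closed subsets of $\mathfrak{X}$. Any finite subfamily is bounded below by sets of the form $cl_{\mathfrak{X}}(A \setminus D_0)$ and $cl_{\mathfrak{X}}(B \setminus D_0')$ for sufficiently large bounded $D_0, D_0'$ (taking unions), so it suffices to check that $cl_{\mathfrak{X}}(A \setminus D_0) \cap cl_{\mathfrak{X}}(B \setminus D_0')$ is nonempty. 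Since $(A \setminus D_0) \delta_{dis} (B \setminus D_0')$, Proposition \ref{cluster_containing_both} provides a cluster containing both sets, which is precisely a point in this intersection. Thus $\mathcal{F}$ has the finite intersection property, and compactness of $\mathfrak{X}$ gives some $\sigma \in \bigcap \mathcal{F}$.

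It remains to verify $\sigma \in \mathcal{U}X$ and that $A, B \in \sigma$. Taking $D = \emptyset$ shows $A \in \sigma$ and $B \in \sigma$. For the first condition, suppose towards contradiction that some bounded $C$ lies in $\sigma$. Then $(A \setminus C) \in \sigma$ as well, forcing $C \delta_{dis} (A \setminus C)$; but these sets are disjoint and $C \bar{\bf b} (A \setminus C)$ by axiom (\ref{axiom2}), a contradiction. Hence $\sigma \in \mathcal{U}X$ and $\sigma \in tr(A) \cap tr(B)$. The main obstacle in the argument is the verification of the finite intersection property, which hinges on the stability lemma proven in the second paragraph; once that is in place, the rest is compactness and a routine application of the cluster axioms.
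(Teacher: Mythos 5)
Your proof is correct and follows essentially the same route as the paper's: establish $(A\setminus D)\,\delta_{dis}\,(B\setminus D')$ for all bounded $D,D'$, extract a point of $\mathcal{U}X$ lying in $cl_{\mathfrak{X}}(A)\cap cl_{\mathfrak{X}}(B)$ via the finite intersection property in the compact space $\mathfrak{X}$, and use Lemma \ref{clusters realized by coarse proximity} for the converse. The only (immaterial) differences are that you symmetrize the family by deleting bounded sets from both $A$ and $B$ where the paper keeps $cl_{\mathfrak{X}}(B)$ whole, and you certify nonempty pairwise intersections via Proposition \ref{cluster_containing_both} rather than by appealing directly to compactness.
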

\begin{proof}
The proposition is clearly true if at least one of the subsets is bounded, so let us assume that both $A$ and $B$ are unbounded. To prove the forward direction, assume $A{\bf b}B.$ Consequently, we have that $(A\setminus D_{1}){\bf b}(B\setminus D_{2})$ for all $D_{1},D_{2}\in\mathcal{B}$. This implies that $(A\setminus D_{1})\delta_{dis}(B\setminus D_{2})$ for all $D_{1},D_{2}\in\mathcal{B}$. Because $\mathfrak{X}$ is compact we have that $cl_{\mathfrak{X}}(A\setminus D_{1})\cap cl_{\mathfrak{X}}B\neq\emptyset$ for all $D_{1}\in\mathcal{B}.$ We then have that the collection $\mathcal{C}=\{cl_{\mathfrak{X}}(B)\}\cup\{cl_{\mathfrak{X}}(A\setminus D)\mid D\in\mathcal{B}\}$ is a collection of closed sets in $\mathfrak{X}$ that has the finite intersection property. To see this, notice that for any bounded sets $D_{1},\ldots, D_{n}\in\mathcal{B},$ we have
\[\left(\bigcap_{i=1}^{n}cl_{\mathfrak{X}}(A\setminus D_{i})\right)\cap cl_{\mathfrak{X}}(B) \supseteq cl_{\mathfrak{X}}\left(A\setminus\left(\bigcup_{i=1}^{n}D_{i}\right)\right)\cap cl_{\mathfrak{X}}(B)\neq\emptyset.\]	
The compactness of $\mathfrak{X}$ then tells us that $\bigcap\mathcal{C}\neq\emptyset.$ Let $\sigma \in \bigcap\mathcal{C}.$ The proof of Proposition \ref{nonempty trace} gives us that $\sigma\in\mathcal{U}X.$ Moreover, it is clear that $\sigma \in cl_{\mathfrak{X}}(A)$ and $\sigma \in  cl_{\mathfrak{X}}(B).$ Thus, $\sigma\in tr(A)$ and $\sigma\in tr(B),$ which shows that $tr(A)\cap tr(B) \neq\emptyset.$

To see the converse, assume that $tr(A)\cap tr(B)\neq\emptyset$. Let $\sigma \in tr(A)\cap tr(B).$ Then $A,B\in\sigma$ and $\sigma \in \mathcal{U}X,$ and thus Lemma \ref{clusters realized by coarse proximity} gives us that $A{\bf b}B.$
\end{proof}

The following proposition shows that two subsets of a coarse proximity space are $\phi$ related if and only if they are "the same at the boundary."

\begin{proposition}\label{equality_at_infinity}
	Let $(X,\mathcal{B},{\bf b})$ be a coarse proximity space and $\phi$ the corresponding weak asymptotic resemblance. If $A,B\subseteq X,$ then $A\phi B$ if and only if $tr(A)=tr(B).$
\end{proposition}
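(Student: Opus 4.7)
The plan is to reduce quickly to the case where both $A$ and $B$ are unbounded and then play the equivalence off the two key tools already established: Lemma \ref{phi_relation_closed} (clusters in $\mathcal{U}X$ are $\phi$-saturated) for the forward direction, and Proposition \ref{coarsely close sets have intersecting trace} (coarse closeness $=$ intersecting traces) together with Proposition \ref{nonempty trace} (unbounded sets have nonempty trace) for the reverse direction.

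First I would dispose of the boundary cases. If both $A$ and $B$ are bounded, then $A\phi B$ holds vacuously from the definition of $\phi$ (no unbounded subsets to test), and $tr(A) = tr(B) = \emptyset$ by Proposition \ref{nonempty trace}(1). If exactly one of the sets, say $A$, is bounded, then $A\bar{\bf b}B$ by axiom (\ref{axiom2}) of a coarse proximity, so $A\not\phi B$ when we plug in $B'=B$; on the trace side, $tr(A)=\emptyset\neq tr(B)$ by Proposition \ref{nonempty trace}. So in this case both sides of the biconditional fail, and we may assume $A$ and $B$ are both unbounded.

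For the forward direction, assume $A\phi B$ and pick $\sigma\in tr(A)$. Then $A\in\sigma$ and $\sigma\in\mathcal{U}X$, so Lemma \ref{phi_relation_closed} gives $B\in\sigma$, whence $\sigma\in cl_{\mathfrak{X}}(B)\cap\mathcal{U}X=tr(B)$. Symmetry (since $\phi$ is an equivalence relation) yields $tr(B)\subseteq tr(A)$, so $tr(A)=tr(B)$.

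For the reverse direction, assume $tr(A)=tr(B)$ and let $B'\subseteq B$ be unbounded. By Proposition \ref{nonempty trace}(2) we have $tr(B')\neq\emptyset$, while $B'\subseteq B$ immediately gives $tr(B')\subseteq tr(B)=tr(A)$, so $tr(A)\cap tr(B')=tr(B')\neq\emptyset$. Proposition \ref{coarsely close sets have intersecting trace} then yields $A{\bf b}B'$. The analogous argument with the roles of $A$ and $B$ swapped shows that $A'{\bf b}B$ for every unbounded $A'\subseteq A$. Thus both defining conditions of $\phi$ are satisfied, so $A\phi B$.

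There is no real obstacle here: the statement is essentially the combination of Lemma \ref{phi_relation_closed} and Proposition \ref{coarsely close sets have intersecting trace}, and the only care needed is keeping track of the bounded cases (since bounded subsets have empty trace and do not participate in $\phi$ in a symmetric manner with unbounded ones) and remembering that $tr(\cdot)$ is monotone in the argument so that shrinking to an unbounded subset $B'\subseteq B$ keeps the trace inside $tr(B)=tr(A)$.
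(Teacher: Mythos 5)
Your proof is correct and follows essentially the same route as the paper: the same reduction to the unbounded case, Lemma \ref{phi_relation_closed} for the forward direction, and Propositions \ref{nonempty trace} and \ref{coarsely close sets have intersecting trace} for the converse. The only difference is that your converse is a direct argument exploiting monotonicity of the trace, whereas the paper argues by contradiction after first discarding the bounded intersection $C\cap B$; your version is a slight streamlining of the same idea.
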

\begin{proof}
	If $A$ and $B$ are bounded, then clearly $A\phi B$ and $tr(A)=tr(B)=\emptyset$. If only one of the sets is bounded, then clearly $A \bar{\phi} B$ and $tr(A) \neq tr(B).$ So assume that $A$ and $B$ are unbounded. To prove the forward direction, assume $A\phi B.$ Let $\sigma \in tr(A).$ This means that $A \in \sigma$ and $\sigma \in \mathcal{U}X.$ Then Lemma \ref{phi_relation_closed} implies that $B\in \sigma,$ i.e., $\sigma \in cl_{\mathfrak{X}}(B).$ Since $\sigma \in \mathcal{U}X$ as well, this shows that $\sigma \in tr(B).$ Thus, $tr(A) \subseteq tr(B)$ and by symmetry of the argument it follows that $tr(A)=tr(B).$ To prove the converse, assume $tr(A)=tr(B).$ Let $C\subseteq A$ be an unbounded subset. For contradiction, assume that $C\bar{ {\bf b}}B.$ Then $C\cap B\in\mathcal{B}.$ We then have that $C_{0}=(C\setminus(C\cap B))$ is an unbounded subset of $A$ such that $C_{0}\bar{\bf b}B$. Because $C_{0}\subseteq A$ we trivially have that $tr(C_{0})\subseteq tr(A)=tr(B)$. Proposition \ref{nonempty trace} tells us that $tr(C_{0})\neq\emptyset$. Thus, $tr(C_{0})\cap tr(B)\neq\emptyset$. Proposition \ref{coarsely close sets have intersecting trace} implies then that $C_{0}{\bf b}B,$ a contradiction. Thus, $C{\bf b}B$. One can similarly show that if $C\subseteq B$ is an unbounded set, then $C{\bf b}A$. Therefore, $A\phi B$. 
\end{proof}

For the remainder of this section, we will focus on the construction of the aforementioned functor (from the category of coarse proximity spaces to the category of compact Hausdorff spaces).

\begin{proposition}\label{coarse_proximity_map_induces_proximity_map}
	Let $f:(X,\mathcal{B}_X,{\bf b}_X)\rightarrow(Y,\mathcal{B}_Y,{\bf b}_Y)$ be a coarse proximity map. Then $f:(X,\delta_{dis,{\bf b}_X})\rightarrow(Y,\delta_{dis,{\bf b}_Y})$ is a proximity map. Moreover, if $\sigma_{1}$ is in $\mathcal{U}X,$ then the associated cluster $\sigma_{2}$ in $Y,$ as described in Proposition \ref{associated cluster}, is in $\mathcal{U}Y.$
\end{proposition}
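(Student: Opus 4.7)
The plan has two parts, corresponding to the two claims. For the first part, that $f$ is a proximity map between the discrete extensions, I would simply unpack the definition of $\delta_{dis}$. Suppose $A\,\delta_{dis,{\bf b}_X} B$. By the definition in Proposition \ref{induced proximity}, either $A\cap B\neq\emptyset$ or $A\,{\bf b}_X\,B$. In the first case, $f(A)\cap f(B)\neq\emptyset$ immediately, so $f(A)\,\delta_{dis,{\bf b}_Y}\,f(B)$. In the second case, since $f$ is a coarse proximity map, $f(A)\,{\bf b}_Y\,f(B)$, which again gives $f(A)\,\delta_{dis,{\bf b}_Y}\,f(B)$. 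So $f$ is a proximity map.

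For the second part, I would argue by contradiction. Suppose $\sigma_1\in\mathcal{U}X$ but the associated cluster $\sigma_2 = \{A\subseteq Y\mid A\,\delta_{dis,{\bf b}_Y}\,f(B)\ \text{for all}\ B\in\sigma_1\}$ contains some $D\in\mathcal{B}_Y$. The strategy is to produce a bounded element of $\sigma_1$, contradicting $\sigma_1\in\mathcal{U}X$. The candidate is $f^{-1}(D)$. The key observation is that, because $D$ is bounded, axiom (\ref{axiom2}) of a coarse proximity prevents $D\,{\bf b}_Y\,f(B)$ from ever holding, so the relation $D\,\delta_{dis,{\bf b}_Y}\,f(B)$ collapses to $D\cap f(B)\neq\emptyset$. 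This is the crucial step that uses the distinctive nature of the discrete extension when one side is bounded.

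From $D\cap f(B)\neq\emptyset$ for every $B\in\sigma_1$ I would extract, for each such $B$, a point $b\in B$ with $f(b)\in D$, giving $f^{-1}(D)\cap B\neq\emptyset$ and hence $f^{-1}(D)\,\delta_{dis,{\bf b}_X}\,B$ for every $B\in\sigma_1$. Axiom (2) of a cluster (Definition \ref{clusters}) then forces $f^{-1}(D)\in\sigma_1$. Finally, I would invoke the strengthened boundedness equivalence noted after the definition of a coarse proximity map: since $f(f^{-1}(D))\subseteq D\in\mathcal{B}_Y$, hereditariness of the bornology gives $f(f^{-1}(D))\in\mathcal{B}_Y$, whence $f^{-1}(D)\in\mathcal{B}_X$. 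Thus $\sigma_1$ contains a bounded set, contradicting $\sigma_1\in\mathcal{U}X$.

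The main obstacle, if any, is conceptual rather than computational: one must recognise that in the discrete extension, boundedness on one side of $\delta_{dis}$ forces genuine intersection, because closeness ${\bf b}$ is forbidden for bounded sets by axiom (\ref{axiom2}). Once this is seen, the rest is a direct chase through cluster axiom (2) and the bidirectional boundedness-preservation of coarse proximity maps.
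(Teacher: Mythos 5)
Your proof is correct and follows essentially the same route as the paper's: the first part is the same direct unpacking of $\delta_{dis}$, and the second part is the same contradiction argument, showing that a bounded $D\in\sigma_2$ forces $D\cap f(B)\neq\emptyset$ for all $B\in\sigma_1$ and hence puts the bounded set $f^{-1}(D)$ into $\sigma_1$ via cluster axiom (2). You are in fact slightly more careful than the paper in justifying why $f^{-1}(D)$ is bounded (via the biconditional $B\in\mathcal{B}_X\Leftrightarrow f(B)\in\mathcal{B}_Y$), which the paper asserts without comment.
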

\begin{proof}
	That $f$ is a proximity map is clear from the fact that coarse proximity maps preserve ${\bf b}$ and all sets functions preserve nontrivial intersections. Now let $\sigma_{1}$ be a cluster in $X$ that does not contain any unbounded sets. By Proposition \ref{associated cluster}, the associated cluster $\sigma_{2}$ in $Y$ is given by
	\[\sigma_{2}=\{A\subseteq Y\mid \forall C\in\sigma_{1},\,A\delta_{dis,{\bf b}_Y}f(C)\}.\]
	For contradiction, assume that $B\subseteq Y$ is a bounded set such that $B \in \sigma_2.$ Then for all $C \in \sigma_1,$ we have that $B \delta_{dis,{\bf b}_Y} f(C).$ Since $B$ is bounded and $f(C)$ is unbounded, this shows that for all $C \in \sigma_1,$ we have that $B \cap f(C) \neq \emptyset.$ Consequently, $f^{-1}(B)$ is a bounded set that intersects all $C \in \sigma_1.$ Thus, $f^{-1}(B) \in \sigma_1,$ a contradiction to $\sigma_1 \in \mathcal{U}X.$ Thus, $\sigma_2 \in \mathcal{U}Y.$

\end{proof}

\begin{corollary}\label{extension of maps to corona}
	Let $f:(X,\mathcal{B}_X,{\bf b}_X)\rightarrow(Y,\mathcal{B}_B,{\bf b}_B)$ be a coarse proximity map. Then the unique extension $f^*:\mathfrak{X}\rightarrow\mathfrak{Y}$ between Smirnov compactifications maps $\mathcal{U}X$ to $\mathcal{U}Y$. 
\end{corollary}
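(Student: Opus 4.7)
The plan is to assemble this corollary directly from two results already proved in the excerpt, namely Proposition \ref{coarse_proximity_map_induces_proximity_map} and Theorem \ref{extension_theorem}. The entire content of the corollary is essentially a restatement of the second half of Proposition \ref{coarse_proximity_map_induces_proximity_map} once one identifies the abstract extension $f^*$ from Theorem \ref{extension_theorem} with the cluster-assignment $\sigma_1 \mapsto \sigma_2$ coming from Proposition \ref{associated cluster}.

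First I would invoke Proposition \ref{coarse_proximity_map_induces_proximity_map} to conclude that $f:(X,\delta_{dis,{\bf b}_X})\to(Y,\delta_{dis,{\bf b}_Y})$ is a proximity map between the discrete extensions. Since these discrete extensions are separated proximities (by Proposition \ref{induced proximity}), Theorem \ref{extension_theorem} then supplies a unique proximity map $f^*:\mathfrak{X}\to\mathfrak{Y}$ extending $f$; moreover, Theorem \ref{extension_theorem} tells us that this extension is given by the formula from Proposition \ref{associated cluster}, namely
\[
f^{*}(\sigma_1)=\{A\subseteq Y\mid A\,\delta_{dis,{\bf b}_Y}\,f(B)\text{ for all }B\in\sigma_1\}.
\]

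Next I would take an arbitrary $\sigma_1\in\mathcal{U}X$, so that $\sigma_1$ contains no bounded subsets of $X$. The second half of Proposition \ref{coarse_proximity_map_induces_proximity_map} is precisely the statement that the cluster $\sigma_2$ defined by the above formula contains no bounded subsets of $Y$, i.e., $\sigma_2\in\mathcal{U}Y$. Since $\sigma_2=f^{*}(\sigma_1)$, this gives $f^{*}(\sigma_1)\in\mathcal{U}Y$, and hence $f^{*}(\mathcal{U}X)\subseteq\mathcal{U}Y$, which is the claim.

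There is genuinely no obstacle to overcome here: all the work is done in Proposition \ref{coarse_proximity_map_induces_proximity_map}, where one must verify that the absence of bounded sets is preserved under the cluster correspondence, and in Theorem \ref{extension_theorem}, which supplies the extension itself. The only thing to be careful about in writing out the proof is to explicitly identify $f^{*}(\sigma_1)$ with the cluster $\sigma_2$ of Proposition \ref{associated cluster}, so that the conclusion of Proposition \ref{coarse_proximity_map_induces_proximity_map} can be applied to $f^{*}(\sigma_1)$ rather than to some a priori different object. Accordingly the proof will be only a few lines long.
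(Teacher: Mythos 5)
Your proposal is correct and follows exactly the paper's own argument: the paper likewise derives the corollary immediately from Theorem \ref{extension_theorem} and Proposition \ref{coarse_proximity_map_induces_proximity_map}, with the identification of $f^{*}(\sigma_1)$ with the cluster $\sigma_2$ of Proposition \ref{associated cluster} being the only point requiring care. Your more explicit spelling-out of that identification is a faithful expansion of the paper's one-line proof.
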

\begin{proof}
Follows immediately from Theorem \ref{extension_theorem} and Proposition \ref{coarse_proximity_map_induces_proximity_map}.
\end{proof}

\begin{definition}
	The map $f^*$ in Corollary \ref{extension of maps to corona} restricted in domain and codomain to $\mathcal{U}X$ and $\mathcal{U}Y$ will be denoted by $\mathcal{U}f$.
\end{definition}

\begin{proposition}\label{closeness_classes_map_to_the_same_thing}
Let $f,g:(X,\mathcal{B}_X,{\bf b}_X)\to(Y,\mathcal{B}_Y,{\bf b}_Y)$ be close coarse proximity maps. Then $\mathcal{U}f=\mathcal{U}g$.
\end{proposition}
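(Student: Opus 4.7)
The plan is to fix an arbitrary $\sigma_1 \in \mathcal{U}X$, set $\sigma_2 = \mathcal{U}f(\sigma_1)$ and $\sigma_3 = \mathcal{U}g(\sigma_1)$, and show directly that $\sigma_2 = \sigma_3$ as subsets of the power set of $Y$. Both $\sigma_2$ and $\sigma_3$ lie in $\mathcal{U}Y$ by Corollary \ref{extension of maps to corona}, so they are genuine clusters consisting only of unbounded sets, and by the remark following Proposition \ref{associated cluster} we have $f(B) \in \sigma_2$ and $g(B) \in \sigma_3$ for every $B \in \sigma_1$.

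The key leverage is Lemma \ref{phi_relation_closed}: every cluster in $\mathcal{U}Y$ is closed under the weak asymptotic resemblance $\phi_Y$. Since $f$ and $g$ are close, for each $B \subseteq X$ we have $f(B)\,\phi_Y\, g(B)$. Applying this to $B \in \sigma_1$ and using that $f(B) \in \sigma_2$ and $g(B) \in \sigma_3$, I conclude that $g(B) \in \sigma_2$ and $f(B) \in \sigma_3$ as well. Thus both $f(B)$ and $g(B)$ belong to each of $\sigma_2, \sigma_3$ whenever $B \in \sigma_1$.

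Now the containment $\sigma_2 \subseteq \sigma_3$ follows from axiom 1 of a cluster applied to $\sigma_2$, together with the defining formula of $\sigma_3$ given by Proposition \ref{associated cluster}: if $A \in \sigma_2$, then $A\,\delta_{dis,\mathbf{b}_Y}\, C$ for every $C \in \sigma_2$, and in particular $A\,\delta_{dis,\mathbf{b}_Y}\, g(B)$ for every $B \in \sigma_1$ (since $g(B) \in \sigma_2$), which places $A$ in $\sigma_3$. The reverse inclusion $\sigma_3 \subseteq \sigma_2$ is obtained by the symmetric argument using $f(B) \in \sigma_3$. Since $\sigma_1$ was arbitrary, this gives $\mathcal{U}f(\sigma_1) = \mathcal{U}g(\sigma_1)$ pointwise and hence $\mathcal{U}f = \mathcal{U}g$.

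The only step requiring real input is the passage from $f(B) \in \sigma_2$ to $g(B) \in \sigma_2$, which is precisely where closeness of $f, g$ enters; the rest is a formal manipulation of the cluster axioms and the explicit description of the associated cluster in Proposition \ref{associated cluster}. I do not anticipate any genuine obstacle beyond correctly invoking Lemma \ref{phi_relation_closed}.
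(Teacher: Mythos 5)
Your proof is correct and follows essentially the same route as the paper's: fix $\sigma_1$, and use the closeness relation $f(B)\,\phi\,g(B)$ to show that the two associated clusters contain one another. If anything, your explicit appeal to Lemma \ref{phi_relation_closed} to place $g(B)$ in $\sigma_2$ before invoking the formula from Proposition \ref{associated cluster} is slightly more carefully justified than the paper's one-line passage from $A\,\delta_{dis,{\bf b}_Y}\,f(C)$ to $A\,\delta_{dis,{\bf b}_Y}\,g(C)$, which tacitly needs Lemma \ref{clusters realized by coarse proximity} to upgrade $\delta_{dis,{\bf b}_Y}$ to ${\bf b}_Y$ before the fact that $\phi$-related sets are coarsely close to the same sets can be applied.
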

\begin{proof}
Let $\sigma_1$ be an element of $\mathcal{U}X.$ Let $\sigma_2$ and $\sigma_2'$ be the clusters in $\mathcal{U}Y$ corresponding to the images of $\sigma_1$ under $f$ and $g,$ respectively. Let $A \in \sigma_2.$ Then for all $C \in \sigma_1,$ we have that $A \delta_{dis, {\bf b}_Y} f(C).$ Since $f(C) \phi_{{\bf b}_X} g(C)$ (where $\phi_{{\bf b}_X}$ is the weak asymptotic resemblance induced by ${\bf b}_X$), this shows that $A \delta_{dis, {\bf b}_Y} g(C).$ Since $C$ was an arbitrary element of $\sigma_1,$ this implies that $A \in \sigma_2.$ Thus, $\sigma_2 \subseteq \sigma_2'.$ The opposite inclusion follows similarly.
\end{proof}

\begin{proposition}\label{functor_propoerties}
Let $f:(X,\mathcal{B}_X,{\bf b}_X)\to(Y,\mathcal{B}_Y,{\bf b}_Y)$ and $g:(Y,\mathcal{B}_Y,{\bf b}_Y)\to(Z,\mathcal{B}_Z,{\bf b}_Z)$ be coarse proximity maps. 
 Then the following are true:
\begin{enumerate}
\item $\mathcal{U}id_X = id_{\mathcal{U}X},$
\item $\mathcal{U}(g \circ f) = \mathcal{U}g \circ \mathcal{U}f$.
\end{enumerate}
\end{proposition}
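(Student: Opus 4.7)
The plan is to reduce both claims to the uniqueness part of Theorem \ref{extension_theorem}, which says that the Smirnov extension of a proximity map between separated proximity spaces is unique. The only thing I need to check first is that the hypotheses of that theorem are met in each case; after that, both identities follow by identifying two Smirnov extensions of the same base map.

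For part (1), the identity $id_X$ is trivially a coarse proximity map, so by Proposition \ref{coarse_proximity_map_induces_proximity_map} it is a proximity map $(X,\delta_{dis,{\bf b}_X})\to(X,\delta_{dis,{\bf b}_X})$ and therefore has a unique Smirnov extension $(id_X)^*:\mathfrak{X}\to\mathfrak{X}$. But $id_{\mathfrak{X}}$ is also a proximity map from $(\mathfrak{X},\delta_{dis,{\bf b}_X}^*)$ to itself that restricts to $id_X$ on $X$. Uniqueness in Theorem \ref{extension_theorem} forces $(id_X)^* = id_{\mathfrak{X}}$, and restricting to $\mathcal{U}X$ (which is a legitimate restriction by Corollary \ref{extension of maps to corona}) gives $\mathcal{U}id_X = id_{\mathcal{U}X}$.

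For part (2), I first observe that the composition $g\circ f$ is again a coarse proximity map: coarse proximity maps preserve bounded sets and the relation ${\bf b}$, so their composition does as well. By Proposition \ref{coarse_proximity_map_induces_proximity_map}, $g\circ f$ is therefore a proximity map of the discrete extensions, and so has a unique Smirnov extension $(g\circ f)^*:\mathfrak{X}\to\mathfrak{Z}$. On the other hand, $g^*\circ f^*$ is a composition of proximity maps of Smirnov compactifications, hence itself a proximity map $\mathfrak{X}\to\mathfrak{Z}$; and on the dense subspace $X$ it agrees with $g\circ f$, since $f^*|_X = f$ and $g^*|_Y = g$. Thus $g^*\circ f^*$ is another proximity map extending $g\circ f$, so uniqueness gives $(g\circ f)^* = g^*\circ f^*$. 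Restricting to $\mathcal{U}X$ (and using Corollary \ref{extension of maps to corona} to see that $f^*$ sends $\mathcal{U}X$ into $\mathcal{U}Y$, so the composition makes sense) yields $\mathcal{U}(g\circ f) = \mathcal{U}g \circ \mathcal{U}f$.

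There is no real obstacle here; the only thing to be careful about is invoking Theorem \ref{extension_theorem} with the right source/target proximity structures (always the discrete extensions, never the coarse proximities themselves) and remembering that the ``identification of $X$ with its image in $\mathfrak{X}$'' convention is what lets us speak of a Smirnov-level map as literally extending the original coarse proximity map on $X$. Once that bookkeeping is clear, both statements are immediate from uniqueness.
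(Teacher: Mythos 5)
Your proof is correct, but it takes a genuinely different route from the paper's. The paper argues entirely at the level of clusters: it writes out the explicit formulas for the associated clusters $\sigma_2$, $\sigma_3$, and $\sigma_3'$ from Proposition \ref{associated cluster}, verifies the inclusion $\sigma_3 \subseteq \sigma_3'$ by unwinding the definitions, and then invokes the maximality property of clusters (if $\sigma \subseteq \sigma'$ for clusters $\sigma, \sigma'$, then $\sigma = \sigma'$) to conclude equality; part (1) is handled the same way. You instead lift everything to the Smirnov compactifications and appeal to the uniqueness clause of Theorem \ref{extension_theorem}: since $id_{\mathfrak{X}}$ and $g^* \circ f^*$ are proximity maps extending $id_X$ and $g \circ f$ respectively, they must coincide with $(id_X)^*$ and $(g \circ f)^*$, and restricting to the boundaries (legitimate by Corollary \ref{extension of maps to corona}) finishes the argument. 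Your approach is cleaner and makes the functoriality transparent --- it is really just the statement that "unique extension to the compactification" is itself functorial, resting on density of $X$ in $\mathfrak{X}$ and Hausdorffness of the target --- at the cost of depending on the uniqueness assertion of Theorem \ref{extension_theorem}, which the paper cites rather than proves. The paper's computation is more self-contained and doubles as a concrete description of how the boundary clusters transform, which is used elsewhere (e.g., in Proposition \ref{closeness_classes_map_to_the_same_thing}). The one point worth stating explicitly in your write-up is that the composition of two coarse proximity maps is again a coarse proximity map and that the composition of two proximity maps is a proximity map; both are immediate from the definitions, and you do note the first.
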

\begin{proof}
Recall that given any proximity space and any clusters $\sigma$ and $\sigma'$ in that proximity space, then $\sigma \subseteq \sigma'$ implies $\sigma = \sigma'.$ Keeping that in mind, $(1)$ is immediate. To see $(2),$ let $\sigma_1$ be an element of $\mathcal{U}X,$ $\sigma_2$ the associated cluster in $\mathcal{U}Y$ (through $f$), and $\sigma_3$ the associated cluster in $\mathcal{U}Z$ (through $g$), i.e.,
	\[\sigma_{2}=\{B\subseteq Y\mid \forall A\in\sigma_{1},\,B\delta_{dis,{\bf b}_Y}f(A)\},\]
	\[\sigma_{3}=\{C\subseteq Z\mid \forall B\in\sigma_{2},\,C\delta_{dis,{\bf b}_Z}g(B)\}.\]
 Let $\sigma_3'$ be the associated cluster in $\mathcal{U}Z$ (through $g \circ f),$ i.e., 
 	\[\sigma_3'=\{C\subseteq Z\mid \forall A\in\sigma_{1},\,C\delta_{dis,{\bf b}_Z}(g \circ f)(A)\}.\]
 
  Let $C$ be any element of $\sigma_3.$ Then $C\delta_{dis,{\bf b}_Z}g(B)$ for any $B$ in $\sigma_2.$ Since $f(A)$ is in $\sigma_2$ for any $A$ in $\sigma_1,$ this in particular shows that $C\delta_{dis,{\bf b}_Z}g(f(A))$ for any $A$ in $\sigma_1.$ Thus, $C$ is in $\sigma_3'.$ Consequently, $\sigma_3 \subseteq \sigma_3',$ and thus $\sigma_3 = \sigma_3'.$
  \end{proof}

\begin{theorem}\label{functor_theorem}
	The assignment of the compact separated proximity space $\mathcal{U}X$ to the coarse proximity space $(X,\mathcal{B},{\bf b})$ together with the assignment of the proximity map $\mathcal{U}f$ to a closeness class of coarse proximity maps $[f]:(X,\mathcal{B}_X,{\bf b}_X)\rightarrow(Y,\mathcal{B}_Y,{\bf b}_Y)$ makes up a functor from the category {\bf CrsProx} of coarse proximity spaces with closeness classes of coarse proximity maps to the category {\bf CSProx} of compact separated proximity spaces with proximity maps.
\end{theorem}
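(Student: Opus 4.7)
The plan is to assemble the theorem by verifying, in order, that $\mathcal{U}$ is well-defined on objects, well-defined on morphisms, respects closeness classes, and preserves identities and composition. Almost all of the heavy lifting has already been done in the preceding propositions, so the proof is essentially a matter of collecting these results and checking that the assignment descends cleanly to the quotient category of closeness classes.

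First, I would verify the object assignment. Given a coarse proximity space $(X,\mathcal{B},{\bf b})$, Proposition \ref{induced proximity} yields the discrete extension $(X,\delta_{dis})$, which is a separated proximity space. Its Smirnov compactification $\mathfrak{X}$ is compact Hausdorff and carries the separated proximity $\delta_{dis}^*$, and the earlier proposition showed that $\mathcal{U}X$ is a closed subset of $\mathfrak{X}$. Equipped with the subspace proximity inherited from $(\mathfrak{X},\delta_{dis}^*)$ (see Example \ref{subspace_proximity}), $\mathcal{U}X$ is therefore a compact separated proximity space, landing in the category \textbf{CSProx}.

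Next, I would check the morphism assignment. Given a coarse proximity map $f\colon(X,\mathcal{B}_X,{\bf b}_X)\to(Y,\mathcal{B}_Y,{\bf b}_Y)$, Proposition \ref{coarse_proximity_map_induces_proximity_map} shows that $f$ is a proximity map between the discrete extensions, so by Theorem \ref{extension_theorem} it extends uniquely to a proximity map $f^*\colon\mathfrak{X}\to\mathfrak{Y}$, and Corollary \ref{extension of maps to corona} guarantees $f^*(\mathcal{U}X)\subseteq\mathcal{U}Y$. The restriction $\mathcal{U}f\colon\mathcal{U}X\to\mathcal{U}Y$ is then a proximity map for the subspace proximities, because proximity maps restrict to proximity maps between subspaces (this follows directly from the definition of the subspace proximity). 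To show $\mathcal{U}$ descends to closeness classes, I invoke Proposition \ref{closeness_classes_map_to_the_same_thing}, which shows that if $f$ and $g$ are close coarse proximity maps, then $\mathcal{U}f=\mathcal{U}g$. Hence $\mathcal{U}[f] := \mathcal{U}f$ is unambiguous.

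Finally, functoriality is Proposition \ref{functor_propoerties}: $\mathcal{U}id_X=id_{\mathcal{U}X}$ and $\mathcal{U}(g\circ f)=\mathcal{U}g\circ\mathcal{U}f$. Combining these four pieces completes the proof. I do not foresee any real obstacle: the only conceptually subtle step was ensuring the extension $f^*$ sends boundary clusters to boundary clusters (i.e., Proposition \ref{coarse_proximity_map_induces_proximity_map}), and that work is already done. The proof itself is thus just a one-paragraph citation chain gathering Propositions \ref{coarse_proximity_map_induces_proximity_map}, \ref{closeness_classes_map_to_the_same_thing}, \ref{functor_propoerties}, and Corollary \ref{extension of maps to corona}.
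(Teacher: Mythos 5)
Your proposal is correct and follows exactly the paper's own argument, which also just assembles Corollary \ref{extension of maps to corona}, Proposition \ref{closeness_classes_map_to_the_same_thing}, and Proposition \ref{functor_propoerties} into the functoriality statement. The extra details you spell out (that $\mathcal{U}X$ lands in \textbf{CSProx} and that proximity maps restrict to subspace proximities) are correct and left implicit in the paper.
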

\begin{proof}
This is immediate from Corollary \ref{extension of maps to corona}, Proposition \ref{closeness_classes_map_to_the_same_thing}, and Proposition \ref{functor_propoerties}.
\end{proof}

\begin{corollary}\label{invariance_of_boundaries_under_proximal_coarse_equivalence}
If $f:(X,\mathcal{B}_X,{\bf b}_X) \to (Y,\mathcal{B}_Y,{\bf b}_Y)$ is a is a coarse proximity isomorphism, then $Uf$ is a proximity isomorphism. In particular, if $(X,\mathcal{B}_X,{\bf b}_X)$ and $(Y,\mathcal{B}_Y,{\bf b}_Y)$ are coarse proximity isomorphic, then $\mathcal{U}X$ and $\mathcal{U}Y$ are homeomorphic.
\end{corollary}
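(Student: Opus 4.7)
The plan is to exploit the functoriality established in Theorem \ref{functor_theorem} together with the closeness-invariance established in Proposition \ref{closeness_classes_map_to_the_same_thing}. Since $f:(X,\mathcal{B}_X,{\bf b}_X)\to(Y,\mathcal{B}_Y,{\bf b}_Y)$ is a coarse proximity isomorphism, by definition there exists a coarse proximity map $g:Y\to X$ such that $g\circ f$ is close to $id_X$ and $f\circ g$ is close to $id_Y$. The natural strategy is to produce $\mathcal{U}g$ as the proximity inverse of $\mathcal{U}f$ and verify the identities by pushing everything through the functor.

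First, I would note that $\mathcal{U}f:\mathcal{U}X\to\mathcal{U}Y$ and $\mathcal{U}g:\mathcal{U}Y\to\mathcal{U}X$ are proximity maps by Corollary \ref{extension of maps to corona}. Next, applying Proposition \ref{functor_propoerties}(2) gives
\[\mathcal{U}g\circ\mathcal{U}f=\mathcal{U}(g\circ f)\quad\text{and}\quad \mathcal{U}f\circ\mathcal{U}g=\mathcal{U}(f\circ g).\]
Since $g\circ f$ is close to $id_X$ and $f\circ g$ is close to $id_Y$, Proposition \ref{closeness_classes_map_to_the_same_thing} yields $\mathcal{U}(g\circ f)=\mathcal{U}(id_X)$ and $\mathcal{U}(f\circ g)=\mathcal{U}(id_Y)$. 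Finally, Proposition \ref{functor_propoerties}(1) gives $\mathcal{U}(id_X)=id_{\mathcal{U}X}$ and $\mathcal{U}(id_Y)=id_{\mathcal{U}Y}$. Chaining these equalities, we obtain $\mathcal{U}g\circ\mathcal{U}f=id_{\mathcal{U}X}$ and $\mathcal{U}f\circ\mathcal{U}g=id_{\mathcal{U}Y}$, so $\mathcal{U}f$ is a bijective proximity map whose inverse $\mathcal{U}g$ is also a proximity map, i.e., $\mathcal{U}f$ is a proximity isomorphism.

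For the second assertion, I would invoke Theorem \ref{closure}: every separated proximity induces a completely regular Hausdorff topology, and proximity maps are in particular continuous with respect to these induced topologies. Hence a proximity isomorphism is automatically a homeomorphism of the underlying topological spaces, and the homeomorphism $\mathcal{U}X\cong\mathcal{U}Y$ follows.

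I do not anticipate any real obstacle here; the entire argument is a routine diagram chase enabled by the infrastructure already built. The only subtlety worth double-checking is that a proximity isomorphism in the compact Hausdorff setting is indeed a homeomorphism, which is immediate from Example \ref{unique_proximity} (the proximity on a compact Hausdorff space is uniquely determined by the topology) together with the continuity of proximity maps noted above.
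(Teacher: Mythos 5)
Your argument is correct and is exactly the content behind the paper's one-line proof ("Immediate from Theorem \ref{functor_theorem}"): you have simply unpacked the standard fact that a functor sends isomorphisms to isomorphisms, using Proposition \ref{functor_propoerties} and Proposition \ref{closeness_classes_map_to_the_same_thing} to verify that $\mathcal{U}g$ inverts $\mathcal{U}f$, and then observing that a proximity isomorphism of compact separated proximity spaces is a homeomorphism. No issues; this matches the paper's approach.
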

\begin{proof}
Immediate from Theorem \ref{functor_theorem}.
\end{proof}

\section{Coarse Proximities from Compactifications}\label{compact hausdorff spaces as boundaries of coarse proximity spaces}

In this section, we will show that every compactification $\overline{X}$ of a locally compact Hausdorff space $X$ induces a coarse proximity structure on $X$ whose boundary is homeomorphic to $\overline{X}\setminus X$. Keeping with the convention introduced in the previous section, for any $A \subseteq X$, $tr(A)$ will denote $tr_{\overline{X}, \overline{X}\setminus X}(A),$ as defined in the introduction.

We begin by fixing a locally compact space $X$ with compactification $\overline{X}$. Recall that $\mathcal{B}_C$ denotes the bornology of precompact sets. We can then define a coarse proximity on $(X,\mathcal{B}_{C})$ using the compactification $\overline{X}$.

\begin{proposition}\label{coarse proximity induced by compactification}
	The relation ${\bf b}$ on subsets of $X$ defined by
	\[A{\bf b}C\iff tr(A)\cap tr(C)\neq\emptyset\]
is a coarse proximity on $(X,\mathcal{B}_{C})$. We will call the coarse proximity space $(X,\mathcal{B}_{C},{\bf b})$ the \textbf{coarse proximity structure induced on $X$ by the compactification $\overline{X}$}.
\end{proposition}

\begin{proof}
	 We are only going to verify the strong axiom as the other axioms are straightforward. Let $A,C\subseteq X$ be such that $A\bar{\bf b}C$. This means that $tr(A)$ and $tr(C)$ are disjoint and closed subsets of $\overline{X}\setminus X.$ In particular, they are disjoint and closed subsets of $\overline{X}$. Since $\overline{X}$ is normal and Hausdorff, there exist open sets $U_1$ and $U_2$ of $\overline{X}$ such that $tr(A) \subseteq U_1, tr(C) \subseteq U_2$ and $cl_{\overline{X}}(U_1)$ and $cl_{\overline{X}}(U_2)$ are disjoint. Define $E=U_1 \cap X$. Notice that $E$ is nonempty, since $U_1$ is open in $\overline{X}$ and $X$ is dense in $\overline{X}$. We claim that $A\bar{\bf b}(X\setminus E)$ and $C\bar{\bf b}E$. The second of these follows from 
	\[tr(E) \subseteq tr(U_{1}) \subseteq cl_{\overline{X}}(U_1) \text{ and } tr(C)\cap cl_{\overline{X}}(U_{1})=\emptyset.\]
	Likewise, $A\bar{\bf b}(X\setminus E)$ follows from
	\[tr(X\setminus E)=tr(X\setminus U_{1}) \subseteq cl_{\overline{X}}(\overline{X} \setminus U_1)=\overline{X}\setminus U_1\]
	and 
	\[ tr(A) \cap (\overline{X} \setminus U_1) = \emptyset.\]
	This establishes the strong axiom for ${\bf b}$.
\end{proof}

\begin{theorem}\label{general_theorem}
	The boundary $\mathcal{U}X$ of the coarse proximity space $(X,\mathcal{B}_{C},{\bf b})$, where ${\bf b}$ is defined as in Proposition \ref{coarse proximity induced by compactification}, is homeomorphic to $\overline{X}\setminus X$. 
\end{theorem}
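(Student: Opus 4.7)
The plan is to construct a continuous bijection $\Phi: \mathcal{U}X \to \overline{X}\setminus X$ using the universal property of Smirnov compactifications, then invoke compactness to promote it to a homeomorphism. Since $\overline{X}$ is compact Hausdorff, it carries the unique separated proximity $\delta'$ defined by $A \delta' B \iff cl_{\overline{X}}(A)\cap cl_{\overline{X}}(B) \neq \emptyset$ (Example \ref{unique_proximity}), and the corresponding subspace proximity on $X$ (Example \ref{subspace_proximity}) has Smirnov compactification $\overline{X}$. I first verify that the identity $id_X: (X, \delta_{dis}) \to (X, \delta')$ is a proximity map: if $A\delta_{dis}B$, then either $A\cap B \neq \emptyset$ or $A{\bf b}B$, and in the latter case $tr(A) \cap tr(B) \neq \emptyset$; in either case $cl_{\overline{X}}(A) \cap cl_{\overline{X}}(B) \neq \emptyset$, so $A \delta' B$. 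By Theorem \ref{extension_theorem}, $id_X$ extends uniquely to a proximity map $\Phi: \mathfrak{X} \to \overline{X}$.

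Next I would show that the restriction of $\Phi$ to $\mathcal{U}X$ lands in $\overline{X}\setminus X$. Suppose for contradiction that $\Phi(\sigma) = x \in X$ for some $\sigma \in \mathcal{U}X$; then the image cluster $\Phi(\sigma)$ equals the point cluster at $x$ in $\overline{X}$, which by Proposition \ref{associated cluster} forces $\{x\} \delta' A$, and hence $x \in cl_{\overline{X}}(A) \cap X = cl_X(A)$, for every $A \in \sigma$. Using local compactness of $X$, choose an open neighborhood $U$ of $x$ with compact closure $K := cl_X(U)$. Because $x \in cl_X(A)$, we have $U \cap A \neq \emptyset$, and hence $K\cap A \neq \emptyset$, for every $A\in\sigma$, so $K \delta_{dis} A$ for all $A\in\sigma$. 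Cluster axiom (2) applied to $K$ then forces $K \in \sigma$, contradicting $\sigma \in \mathcal{U}X$ since $K$ is precompact.

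For bijectivity I would use the natural candidate cluster. For each $p \in \overline{X}\setminus X$, set $\sigma_p := \{A\subseteq X : p \in cl_{\overline{X}}(A)\}$. Axiom (1) of a cluster holds because if $A,B \in \sigma_p$, then $p \in tr(A)\cap tr(B)$, giving $A{\bf b}B$ and hence $A\delta_{dis}B$. Axiom (3) follows from $cl_{\overline{X}}(A\cup B) = cl_{\overline{X}}(A)\cup cl_{\overline{X}}(B)$. For axiom (2), if $A\notin\sigma_p$, then $p \notin cl_{\overline{X}}(A)$, and normality of $\overline{X}$ yields an open set $V \ni p$ with $cl_{\overline{X}}(V)\cap cl_{\overline{X}}(A) = \emptyset$; the set $B := V\cap X$ lies in $\sigma_p$ by density of $X$ in $\overline{X}$, while $A \cap B = \emptyset$ and $tr(A)\cap tr(B) = \emptyset$, so $A\bar\delta_{dis}B$. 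That $\sigma_p$ contains no precompact set follows because compact subsets of $X$ are closed in $\overline{X}$. A direct check using Proposition \ref{associated cluster} shows $\Phi(\sigma_p)$ is the point cluster at $p$, giving surjectivity; for injectivity, if $\Phi(\sigma) = p$, then $\{p\}\delta' A$ for every $A\in\sigma$, so $\sigma\subseteq\sigma_p$, and containment of clusters forces $\sigma = \sigma_p$.

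Finally, $\Phi|_{\mathcal{U}X}$ is a continuous bijection between the compact Hausdorff spaces $\mathcal{U}X$ and $\overline{X}\setminus X$ (the latter being closed in $\overline{X}$ since $X$ is open in any Hausdorff compactification of a locally compact Hausdorff space), and so is automatically a homeomorphism. I expect the main obstacle to be the second paragraph — showing that $\Phi$ avoids $X$ on clusters in $\mathcal{U}X$ — because this is precisely where local compactness enters essentially; the remaining verifications are straightforward transfers of cluster data through the definitions.
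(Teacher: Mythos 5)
Your proof is correct, but it is organized quite differently from the paper's. The paper works in the direction $\overline{X}\setminus X\to\mathfrak{X}$: it defines the clusters $_{x}\sigma=\{A\subseteq X\mid x\in tr(A)\}$ (your $\sigma_p$), verifies by hand that $x\mapsto{}_{x}\sigma$ is a bijective proximity map onto its image $\tilde{X}$, and then shows $\mathcal{U}X=\tilde{X}$ by a density argument: $\tilde{X}$ is compact, hence closed in $\mathcal{U}X$, and any $\sigma\in\mathcal{U}X$ must be $\delta^{*}$-close to $\tilde{X}$ because an absorbing set for $\tilde{X}$ has full trace, so an absorbing set for $\sigma$ separated from it would be forced to be bounded. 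You instead go in the direction $\mathfrak{X}\to\overline{X}$, extending the identity via Theorem \ref{extension_theorem} and the uniqueness of the Smirnov compactification of $(X,\delta')$, which buys you continuity for free where the paper must verify the proximity-map property with absorbing sets; you then exhaust $\mathcal{U}X$ pointwise, showing $\Phi(\sigma)\notin X$ by an explicit local-compactness argument (a compact neighborhood $K$ of a putative image point would be forced into $\sigma$ by cluster axiom (2)) and pinning down $\sigma=\sigma_{\Phi(\sigma)}$ by cluster containment. The shared core is the verification that the $\sigma_p$ are clusters containing no precompact sets (the paper's Claim 1), which you both do essentially identically via regularity of $\overline{X}$ and density of $X$. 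Your route makes the role of local compactness more visible and leans harder on the universal property; the paper's route is more self-contained but must argue density at the end. One cosmetic remark: in your injectivity step the expression $\{p\}\,\delta'\,A$ for $p\in\overline{X}\setminus X$ is an abuse, since $\delta'$ lives on $X$; what you mean (and what the argument actually uses) is that $A$ belongs to the cluster of $(X,\delta')$ corresponding to $p$, i.e.\ $p\in cl_{\overline{X}}(A)$, which is enough to conclude $\sigma\subseteq\sigma_p$ and hence $\sigma=\sigma_p$.
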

\begin{proof}
	Let $\delta$ be the subspace proximity on $\overline{X}\setminus X$, as in Example \ref{unique_proximity}. Let $\delta_{dis, {\bf b}}^{*}$ be the Smirnov proximity on the set of all clusters in $X$ induced by the discrete extension of ${\bf b}$. We will show the result via three claims. 

\textbf{Claim 1:} $_{x}\sigma=\{A\subseteq X\mid x\in tr(A)\}$ is a cluster of $X$ for all $x\in\overline{X}\setminus X$.
\begin{proof}[Proof of Claim 1]
	Notice that the only axiom that is not clear is that if $C\delta_{dis, {\bf b}}^{*}A$ for all $A\in {}_{x}\sigma,$ then $C\in {}_{x}\sigma$. However, if $C\notin {}_{x}\sigma,$ then by definition $cl_{\overline{X}}(C)\cap \{x\}=\emptyset$. In particular, $x$ is disjoint from $cl_{\overline{X}}(C).$ Since $\overline{X}$ is compact Hausdorff, we can find an open set $U$ in $\overline{X}$ such that $x \in U$ and $cl_{\overline{X}}(U)$ is disjoint from $cl_{\overline{X}}(C).$ Consider $U \cap X.$ Since $U$ is open and $X$ is dense in $\overline{X},$ $U \cap X$ is a nonempty subset of $X$. Also, $U \cap X$ is in ${}_x\sigma.$ Finally, since $cl_{\overline{X}}(U)$ is disjoint from $cl_{\overline{X}}(C),$ we know that $C \bar{\delta}_{dis,{\bf b}}(U \cap X),$ a contradiction to $C\delta_{dis,{\bf b}}A$ for all $A\in {}_{x}\sigma.$ 
\end{proof}

\textbf{Claim 2:} $\tilde{X}=\{_{x}\sigma\mid x\in\overline{X}\setminus X\}$, (equipped with the proximity $\delta_{dis,{\bf b}}^{*}$) is homeomorphic to $\overline{X}\setminus X$.
\begin{proof}[Proof of Claim 2]
	Define $f: \overline{X}\setminus X \to \tilde{X}$ by $f(x)={}_{x}\sigma.$ It is clear that $f$ is surjective. To show that $f$ is injective, let $x$ and $y$ be elements of $\overline{X}\setminus X$ such that $x \neq y.$ Consequently, there exist two open sets $U_1$ and $U_2$ of $\overline{X}$ such that $x \in U_1, y \in U_2$ and $cl_{\overline{X}}(U_1)$ is disjoint from $cl_{\overline{X}}(U_2).$ Thus, $U_1 \cap X$ and $U_2 \cap X$ are nonempty subsets of $X$ whose closures in $\overline{X}$ do not intersect and $(U_1 \cap X)\in {}_x\sigma$ and $(U_2 \cap X)\in {}_y\sigma.$ In particular, since the closures of $U_1 \cap X$ and $U_2 \cap X$ in $\overline{X}$  intersect, they cannot be in the same cluster. Thus, ${}_x\sigma \neq {}_y\sigma.$ To see that $f$ is a proximity map, let $A$ and $B$ subsets of $\overline{X} \setminus X$ such that $A \delta B,$ i.e., $cl_{\overline{X}\setminus X}(A) \cap cl_{\overline{X}\setminus X}(B) \neq \emptyset.$ Notice that
	\[f(A)=\{{}_{x}\sigma\mid x \in A\},\]
	\[f(B)=\{{}_{x}\sigma\mid x \in B\}.\]
	Let $C$ and $D$ be subsets of $X$ that absorb $f(A)$ and $f(B),$ respectively. We claim that this implies that $cl_{\overline{X}\setminus X}(A)\subseteq tr(C)$ and $cl_{\overline{X}\setminus X}(B)\subseteq tr(D)$. To see this, note that $C$ absorbing $f(A)$ implies that $C\in {}_{a}\sigma$ for all $a\in A,$ which by definition gives us that $a\in tr(C)$ for all $a\in A$. Because $A\subseteq \overline{X}\setminus X$, this gives us that $A\subseteq tr(C)$. Since  $tr(C)$ is closed in $\overline{X}\setminus X,$ we have that $cl_{\overline{X}\setminus X}(A)\subseteq tr(C)$. A similar argument shows that $cl_{\overline{X}\setminus X}(B)\subseteq tr(D)$. Because $cl_{\overline{X}\setminus X}(A)\subseteq tr(C)$, $cl_{\overline{X}\setminus X}(B)\subseteq tr(D)$, and $cl_{\overline{X}\setminus X}(A)\cap cl_{\overline{X}\setminus X}(B)\neq\emptyset$ we have that $tr(C)\cap tr(D)\neq\emptyset.$ This implies that $C{\bf b}D,$ and consequently we have that $C\delta_{dis,{\bf b}}D.$  Since $C$ and $D$ were arbitrary absorbing sets, this shows that $f(A) \delta_1^* f(B).$ Thus, $f$ is a bijective proximity map from a compact space to a Hausdorff space, which shows that $f$ is a homeomorphism.
\end{proof}

\textbf{Claim 3:} $\mathcal{U}X=\tilde{X}$
\begin{proof}[Proof of Claim 3]
	Notice that for any $x \in \overline{X} \setminus X,$ we have that ${}_x\sigma$ contains only unbounded sets (for if ${}_x\sigma$ contains a bounded set $A,$ then $tr(A)$ is empty, and thus cannot contain $x$). Thus, $\tilde{X} \subseteq \mathcal{U}X.$ To see the opposite inclusion, we can show that $\tilde{X}$ is dense in $\mathcal{U}X.$ Showing that $\tilde{X}$ is dense in $\mathcal{U}X$ is sufficient, since $\tilde{X}$ is homeomorphic to $\overline{X}\setminus X,$ and consequently it is compact (and thus closed) in $\mathcal{U}X.$ Being a dense closed subset of $\mathcal{U}X,$ $\tilde{X}$ will have to equal $\mathcal{U}X.$ To show that $\tilde{X}$ is dense in $\mathcal{U}X,$ let $\sigma\in\mathcal{U}X$ and assume that $\{\sigma\}\bar{\delta}_{dis,{\bf b}}^{*}\tilde{X}$. Then there are absorbing sets $C$ for $\sigma$ and $A$ for $\tilde{X}$ such that $tr(C)\cap tr(A)=\emptyset$. Because $tr(A)$ must equal $\overline{X}\setminus X$ (since $A$ is in ${}_x\sigma$ for all $x \in \overline{X}\setminus X$), this implies that $tr(C)=\emptyset$, which implies that $C$ is bounded. But this cannot be, since $C$ is in $\sigma$ and $\sigma$ by definition contains only unbounded sets. Thus, by contradiction it has to be that $\{\sigma\}\delta_{dis,{\bf b}}^{*}\tilde{X}.$ Since $\sigma$ was an arbitrary element of $\mathcal{U}X,$ this shows that $\tilde{X}$ is dense in $\mathcal{U}X$, which shows the claim and finishes the proof of the theorem.
\end{proof} 
\end{proof}

An immediate consequence of the preceding results of this section is the following:

\begin{corollary}
	Every compact Hausdorff space arises as the boundary of a coarse proximity space.
\end{corollary}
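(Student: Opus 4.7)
The plan is to reduce the corollary directly to Theorem \ref{general_theorem}. Given an arbitrary compact Hausdorff space $Y$, I need only exhibit a locally compact Hausdorff space $X$ together with a compactification $\overline{X}$ whose remainder $\overline{X}\setminus X$ is homeomorphic to $Y$; the theorem then produces a coarse proximity structure on $X$ whose boundary $\mathcal{U}X$ is homeomorphic to $Y$.

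For the construction, I would take $X := Y \times [0,1)$ equipped with the product topology and $\overline{X} := Y \times [0,1]$. Since $Y$ is compact Hausdorff and $[0,1)$ is locally compact Hausdorff, the finite product $X$ is locally compact Hausdorff. The space $\overline{X}$ is compact Hausdorff as a product of two compact Hausdorff spaces, and $X$ is dense in $\overline{X}$ because $[0,1)$ is dense in $[0,1]$. Hence $\overline{X}$ is a genuine compactification of $X$, and the projection onto the first factor identifies $\overline{X}\setminus X = Y \times \{1\}$ with $Y$.

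With $X$ and $\overline{X}$ in hand, Proposition \ref{coarse proximity induced by compactification} equips $(X,\mathcal{B}_C)$ with the coarse proximity ${\bf b}$ defined by $A{\bf b}C$ iff $tr(A)\cap tr(C)\neq\emptyset$, and Theorem \ref{general_theorem} yields a homeomorphism $\mathcal{U}X \cong \overline{X}\setminus X \cong Y$. The degenerate case $Y=\emptyset$ is handled by letting $X$ be any bounded coarse proximity space (for instance a single point), whose boundary is empty by definition.

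There is essentially no technical obstacle: all of the work is absorbed by Theorem \ref{general_theorem}, and the only content of the corollary is the observation that every compact Hausdorff space appears as the remainder of some Hausdorff compactification of a locally compact Hausdorff space, which the cylinder construction above makes explicit.
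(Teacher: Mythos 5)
Your proposal is correct and follows exactly the paper's own argument: the paper likewise takes the cylinder $Y\times[0,1)$ with compactification $Y\times[0,1]$ and invokes Theorem \ref{general_theorem}. The extra details you supply (local compactness, density, the empty case) are harmless elaborations of the same proof.
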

\begin{proof}
If $X$ is a given compact Hausdorff space, then we may take $X\times[0,1]$ to be a compactification of $Y=X\times [0,1)$. Then the coarse proximity structure on $Y$ induced by $X\times[0,1]$ is such that $\mathcal{U}Y$ is homeomorphic to $X$.
\end{proof}

In the next four sections, we are going to show how some boundaries of well-known compactifications can be realized as boundaries of coarse proximity spaces. In the next two sections, we use Theorem \ref{general_theorem} to show how the Gromov boundary and the visual boundary arise as boundaries of coarse proximity structures.
In the final two sections, we describe coarse proximities whose boundaries are homeomorphic to the Higson corona and the Freudenthal boundary without using Theorem \ref{general_theorem}.

\section{The Gromov Boundary}\label{the gromov boundary}

In this section, we will briefly review the construction of the Gromov boundary of hyperbolic metric spaces. The results and definitions outlining the construction are as they appear in \cite{Drutu} and \cite{Kapovich}. As the Gromov boundary of a hyperbolic metric space $X$ compactifies $X,$ we may treat it as the boundary of a compactification. This allows us (by using known characterizations of the Gromov boundary and Theorem \ref{general_theorem}) to describe the coarse proximity structure on a hyperbolic metric space whose corresponding boundary is homeomorphic to the Gromov boundary of that space (see Theorem \ref{Gromov Coarse Proximity Structure}).

\begin{definition}
	Let $(X,d)$ be a metric space and $x,y,p\in X$. The {\bf Gromov product} of $x$ and $y$ with respect to $p$ is
	\[(x,y)_{p}=\frac{1}{2}(d(x,p)+d(y,p)-d(x,y)).\]
\end{definition}

\begin{definition}\label{general hyperbolicity}
	Let $(X,d)$ be a metric space. Then $X$ is said to be \textbf{$\delta$-hyperbolic} for some real number $\delta<\infty$ if for all $x,y,z,p\in X,$
	
	\[(x,y)_{p}\geq\min\{(x,z)_{p},(y,z)_{p}\}-\delta.\]
\end{definition}

We note that this definition of hyperbolicity for a metric space is compatible with an alternative characterization of hyperbolicity within geodesic metric spaces due to Rips (see \cite{Roe} or \cite{Nowak}).

For the remainder of this section, let $(X,d)$ be an arbitrary $\delta$-hyperbolic metric space with a fixed based point $p.$

\begin{definition}
	A sequence $(x_{n})$ in $X$ is said to {\bf converge at infinity} if
	\[\lim_{(m,n)\to\infty}(x_{m},x_{n})_{p}=\infty.\]
\end{definition}
\begin{definition}
	Two sequences $(x_{n})$ and $(y_{n})$ in $X$ converging at infinity are said to be \textbf{equivalent}, denoted $(x_{n})\sim(y_{n})$, if
	\[\lim_{n\to\infty}(x_{n},y_{n})_{p}=\infty.\]
\end{definition}

The relation $\sim$ is an equivalence relation on sequences in $X$ that converge at infinity. We denote the equivalence class of a sequence $(x_{n})$ in $X$ converging at infinity by $[(x_n)]$ and  the set of all equivalence classes of such sequences in $X$ by $\partial X.$ We will proceed to define a topology (as in \cite{Drutu} and \cite{Kapovich}) on $\partial X$ and $\overline{X}=X\cup\partial X$ that makes both $\partial X$ and $\overline{X}$ into compact Hausdorff spaces.

To do that, identify the points of $X$ with the set of sequences in $X$ that converge to $x$. Then, extend the Gromov product on $X$ to $\overline{X}$ in the following way: 
for $\eta=[(x_{n})]\in\partial X$ and $\xi=[(y_{n})]\in\partial X,$ define:
\[(\eta,\xi)_{p}=\inf\liminf_{(m,n)\to\infty}(x_{m},y_{n})_{p},\]
where the infimum is taken over all representative sequences. If $y\in X,$ then define
\[(\eta,y)_p=\inf\liminf_{n\to\infty}(x_{n},y)_{p}\]
These products can be written most generally for $x,y\in \overline{X}$ by writing
\[(x,y)_{p}=\inf\liminf_{i\to\infty}(x_{i},y_{i})_{p},\]
where the infimum is taken over all representative sequences for $x$ and $y$.

The topology on $\overline{X}$, called the \textbf{Gromov topology}, is given by equipping $X$ with its metric topology and defining a neighborhood basis at each $\eta\in\partial X$ by defining the sets
\[U_{\eta,R}=\{x\in X\cup\partial X\mid(\eta,x)_{p}>R\}.\]
The topology on $\partial X$ and $\overline{X}$ are such that $\overline{X}$ and $\partial X$ are both compact Hausdorff spaces (a more detailed presentation of this construction can be found in \cite{Drutu} or \cite{Kapovich}). Since we identified the points of $X$ with the set of sequences in $X$ that converge to $x,$ we can think of $X$ as a dense subset of $\overline{X}.$ Consequently, we will call $\overline{X}$ the \textbf{Gromov compactification} of $X$ and $\partial X$ the \textbf{Gromov boundary}. 
By using the definition of the topology on $\overline{X}$, one can show that a sequence $(x_{n})$ in $\overline{X}$ converges to some $\eta\in\partial X$ if and only if
\[\lim_{n\to\infty}(x_{n},\eta)_{p}=\infty.\]
In particular, we have the following:

\begin{proposition}\label{convergence criterion for gromov boundary}
	Given a $\delta$-hyperbolic metric space $X$ with the Gromov boundary $\partial X$, a sequence $(x_{n})$ in $X$ converges to a point $\eta\in\partial X$ if and only if $(x_{n})$ converges at infinity and $[(x_{n})]=\eta.$ \hfill $\square$
\end{proposition}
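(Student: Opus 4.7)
The plan is to reduce the statement to a Gromov-product criterion and then exploit $\delta$-hyperbolicity on both sides. The excerpt already establishes (via the neighborhood basis $U_{\eta,R}$) that $(x_{n})\to\eta$ in $\overline{X}$ if and only if $\lim_{n\to\infty}(x_{n},\eta)_{p}=\infty$. So what I actually need to prove is the equivalence
\[
\lim_{n\to\infty}(x_{n},\eta)_{p}=\infty \quad\Longleftrightarrow\quad (x_{n})\text{ converges at infinity and }[(x_{n})]=\eta.
\]
Throughout, I will use the definition of the extended Gromov product on $\overline{X}$, in particular the elementary fact that for any representative $(z_{k})$ of $\eta$ one has $\liminf_{k\to\infty}(x,z_{k})_{p}\geq (x,\eta)_{p}$, since $(x,\eta)_{p}$ is an infimum over all such $\liminf$s. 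The only other tool I need is the $\delta$-hyperbolicity inequality $(a,b)_{p}\geq\min\{(a,c)_{p},(b,c)_{p}\}-\delta$ applied cleverly to test points from representative sequences.

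For the forward direction ($\Rightarrow$), assume $(x_{n},\eta)_{p}\to\infty$. Pick any representative $(z_{k})$ of $\eta$. For each $k$, $\delta$-hyperbolicity gives $(x_{m},x_{n})_{p}\geq\min\{(x_{m},z_{k})_{p},(x_{n},z_{k})_{p}\}-\delta$. Since the left side does not depend on $k$, taking $\liminf_{k\to\infty}$ on the right and using the bound noted above yields
\[
(x_{m},x_{n})_{p}\;\geq\;\min\bigl\{(x_{m},\eta)_{p},(x_{n},\eta)_{p}\bigr\}-\delta,
\]
which tends to $\infty$ as $m,n\to\infty$, so $(x_{n})$ converges at infinity. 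To show $[(x_{n})]=\eta$, fix any representative $(z_{k})$ of $\eta$ and apply hyperbolicity to $x_{n},z_{n},z_{m}$: for every $m$, $(x_{n},z_{n})_{p}\geq\min\{(x_{n},z_{m})_{p},(z_{n},z_{m})_{p}\}-\delta$. Given $M>0$, for all sufficiently large $n$ we have $(x_{n},\eta)_{p}>M+\delta$, so some $m$ (allowed to depend on $n$) makes $(x_{n},z_{m})_{p}>M+\delta$; and since $(z_{k})$ converges at infinity we can further enlarge $m$ so that $(z_{n},z_{m})_{p}>M+\delta$ too. Hence $(x_{n},z_{n})_{p}>M$ eventually, so $(x_{n})\sim(z_{n})$ and $[(x_{n})]=\eta$.

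For the reverse direction ($\Leftarrow$), assume $(x_{n})$ converges at infinity and $[(x_{n})]=\eta$. Let $(z_{k})$ be any representative of $\eta$. Since $(x_{n})\sim(z_{k})$ (both represent $\eta$), we have $(x_{k},z_{k})_{p}\to\infty$. Apply $\delta$-hyperbolicity to $x_{n},z_{k},x_{k}$:
\[
(x_{n},z_{k})_{p}\;\geq\;\min\bigl\{(x_{n},x_{k})_{p},\,(x_{k},z_{k})_{p}\bigr\}-\delta.
\]
Given $M>0$, choose $N_{1}$ so that $(x_{m},x_{n})_{p}>M+\delta$ for all $m,n\geq N_{1}$, and choose $N_{2}$ (depending on $(z_{k})$) so that $(x_{k},z_{k})_{p}>M+\delta$ for $k\geq N_{2}$. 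Then for $n\geq N_{1}$ and all $k\geq\max(N_{1},N_{2})$ the right side is at least $M$, so $\liminf_{k\to\infty}(x_{n},z_{k})_{p}\geq M$. Since $M$ is uniform in the choice of representative $(z_{k})$, taking the infimum over representatives gives $(x_{n},\eta)_{p}\geq M$ for all $n\geq N_{1}$, which shows $(x_{n},\eta)_{p}\to\infty$.

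The main delicacy, and the only place I foresee friction, is the uniformity step in the reverse direction: I must ensure the lower bound $M$ produced by the hyperbolicity argument does not depend on the representative chosen, so that it survives the infimum in the definition of $(x_{n},\eta)_{p}$. The argument above avoids this pitfall by letting only the threshold index $N_{2}$ (not the bound $M$) depend on $(z_{k})$. Everything else is bookkeeping with the Gromov product and the definition of convergence at infinity.
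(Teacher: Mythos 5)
Your proof is correct. Note, however, that the paper does not actually prove this proposition --- it simply cites Lemma 11.101 of \cite{Kapovich} --- so there is no internal argument to compare against; what you have written is a complete, self-contained verification. Your route is the standard one: you take as given the paper's preceding observation that convergence to $\eta$ in the Gromov topology is equivalent to $(x_n,\eta)_p\to\infty$, and then reduce everything to the four-point inequality $(a,b)_p\geq\min\{(a,c)_p,(b,c)_p\}-\delta$ together with the fact that $(x,\eta)_p$ is an infimum of $\liminf$'s over representatives. The two places where such arguments usually go wrong are both handled correctly: in the forward direction, the auxiliary index $m$ in the comparison $(x_n,z_n)_p\geq\min\{(x_n,z_m)_p,(z_n,z_m)_p\}-\delta$ is allowed to depend on $n$, and your ``eventually'' correctly absorbs the requirement that $n$ also exceed the threshold for $(z_n,z_m)_p$; in the reverse direction, you correctly arrange that only the threshold index $N_2$, and not the bound $M$ or the index $N_1$, depends on the representative $(z_k)$, so the estimate survives the infimum defining $(x_n,\eta)_p$. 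A pleasant byproduct of your argument is that it uses neither properness nor geodesicity, so it is valid for the arbitrary $\delta$-hyperbolic space of the proposition's statement.
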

\begin{proof}
See Lemma 11.101 in \cite{Drutu}.
\end{proof}

	Note that $\overline{X}$ equipped with the Gromov topology is first countable. Thus, if $A\subseteq X$, then we have that $\eta\in tr(A)=tr_{\overline{X}, \partial X}(A)$
if and only if there is a sequence $(x_{n})$ in $A$ that converges at infinity and $[(x_{n})]=\eta$.

The above characterization of the intersection of the closure of $A$ in $\overline{X}$ with the Gromov boundary suggests the definition of a coarse proximity structure.

\begin{theorem}\label{Gromov Coarse Proximity Structure}
Let $(X,d)$ be a proper $\delta$-hyperbolic metric space with the Gromov boundary $\partial X$. For any two sets $A,B\subseteq X,$ define $A{\bf b}_GB$ if and only if there are sequences $(x_{n})$ in $A$ and $(y_{n})$ in $B$ that converge at infinity and are equivalent via the relation $\sim$. Then the triple $(X,\mathcal{B}_d,{\bf b}_G)$ is a coarse proximity space whose boundary $\mathcal{U}_{{\bf b}_G}X$ is homeomorphic to the Gromov boundary $\partial X$.
\end{theorem}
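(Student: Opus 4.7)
The plan is to apply Theorem \ref{general_theorem} to the Gromov compactification $\overline{X} = X \cup \partial X$. Since $(X,d)$ is proper, $X$ is locally compact Hausdorff and, moreover, $\mathcal{B}_d$ coincides with the bornology $\mathcal{B}_C$ of precompact sets. Once this identification is made, the only substantive task is to check that the relation ${\bf b}_G$ from the statement coincides with the coarse proximity ${\bf b}$ on $(X,\mathcal{B}_C)$ induced by the Gromov compactification as in Proposition \ref{coarse proximity induced by compactification}; that is, to verify
\[A\,{\bf b}_G\,B \iff tr_{\overline{X},\partial X}(A)\cap tr_{\overline{X},\partial X}(B)\neq\emptyset\]
for all $A,B\subseteq X$. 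Once this is established, Theorem \ref{general_theorem} gives the desired homeomorphism $\mathcal{U}_{{\bf b}_G}X \cong \partial X$.

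To set up the equivalence, I would first observe that the Gromov topology on $\overline{X}$ is first countable at every boundary point, since the neighborhoods $U_{\eta,n}$ with $n\in\mathbb{N}$ form a countable base at each $\eta\in\partial X$. Consequently, for any $A\subseteq X$, a point $\eta\in\partial X$ lies in $tr_{\overline{X},\partial X}(A)$ if and only if there is a sequence in $A$ converging to $\eta$ in $\overline{X}$. By Proposition \ref{convergence criterion for gromov boundary}, this is in turn equivalent to the existence of a sequence $(x_n)$ in $A$ that converges at infinity and satisfies $[(x_n)]=\eta$.

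With this characterization in hand, the equivalence ${\bf b}_G \Leftrightarrow {\bf b}$ drops out directly. For the forward direction, if $A\,{\bf b}_G\,B$, take sequences $(x_n)\subseteq A$ and $(y_n)\subseteq B$ which converge at infinity and satisfy $(x_n)\sim(y_n)$; then $\eta:=[(x_n)]=[(y_n)]$ lies in both $tr_{\overline{X},\partial X}(A)$ and $tr_{\overline{X},\partial X}(B)$. Conversely, any $\eta$ in the intersection of the two traces yields, via first countability, sequences $(x_n)\subseteq A$ and $(y_n)\subseteq B$ converging to $\eta$ in the Gromov topology, hence converging at infinity with $[(x_n)]=[(y_n)]=\eta$, so that $(x_n)\sim(y_n)$ and $A\,{\bf b}_G\,B$.

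The main obstacle here is genuinely minor and amounts to justifying the use of sequences to detect traces, which is the step that relies on first countability of the Gromov topology at boundary points; the rest of the argument is unpacking of definitions against Proposition \ref{convergence criterion for gromov boundary}. In particular, this strategy avoids any direct verification that ${\bf b}_G$ is a coarse proximity, because that fact follows automatically from Proposition \ref{coarse proximity induced by compactification} once ${\bf b}_G$ is identified with the compactification-induced coarse proximity.
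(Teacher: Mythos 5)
Your proposal is correct and follows essentially the same route as the paper: identify $\mathcal{B}_d$ with $\mathcal{B}_C$ via properness, use first countability of the Gromov topology together with Proposition \ref{convergence criterion for gromov boundary} to show that ${\bf b}_G$ is exactly the coarse proximity induced by the Gromov compactification, and then invoke Theorem \ref{general_theorem}. No gaps.
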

\begin{proof}
In this case, the bornology $\mathcal{B}_{d}$ of metrically bounded subsets is identical to the bornology $\mathcal{B}_{C}$ of subsets of $X$ whose closures in $X$ are compact. Proposition \ref{convergence criterion for gromov boundary} tells us that if $A\subseteq X$ is unbounded, then a point $\eta\in\partial X$ is in $tr(A)$ if and only if there is a sequence $(x_{n})$ in $A$ that converges at infinity and whose equivalence class is $\eta$. Said differently, $\eta\in tr(A)$ if and only if there is a sequence in $A$ that converges to $\eta$ in the topology on $\overline{X}$ described above. Then for subsets $A,C\subseteq X$ we have that $A{\bf b}_{G}C$ if and only if $tr(A)\cap tr(C)\neq\emptyset$. That relation ${\bf b}_{G}$ is then precisely the coarse proximity structure on $(X,\mathcal{B}_{C})$ induced by the compactification $\overline{X}$. Theorem \ref{general_theorem} then gives us that $\mathcal{U}_{{\bf b}_{G}}X$ is homeomorphic to $\partial X$, the Gromov boundary of $X$. 
\end{proof}

\begin{definition}
	For a proper $\delta$-hyperbolic metric space $X$, the coarse proximity structure $(X,\mathcal{B}_d,{\bf b}_G)$ as described in Theorem \ref{Gromov Coarse Proximity Structure} will be called the {\bf Gromov coarse proximity structure} on $X$, and ${\bf b}_G$ will be called the \textbf{Gromov coarse proximity}.
\end{definition}

\section{The Visual Boundary}\label{The Visual Boundary}

Our next example of a boundary of a coarse proximity space will be the visual boundary assigned to a proper and complete Cat$(0)$ metric space. Our construction of the visual boundary and the visual compactification is as can be found in \cite{curtis}. An alternative construction can be found in \cite{BridsonHaefliger}.  
The main theorem of this section is Theorem \ref{visual coarse proximity}, which describes the coarse proximity on a proper and complete Cat(0) space which induces a boundary homeomorphic to the visual boundary.

\begin{definition}
	A {\bf geodesic ray} in a metric space $(X,d)$ is a map $\gamma:[0,\infty)\to X$ such that $d(\gamma(t),\gamma(t^{\prime}))=|t-t^{\prime}|$ for all $t\in[0,\infty)$. The geodesic ray $\gamma$ is said to be \textbf{based at} $x_{0}$ if $\gamma(0)=x_{0}$. A \textbf{geodesic segment} $\mu$ in $X$ is an isometric embedding of an interval $[0,t]$ into $X$. The geodesic segment $\mu:[0,t] \to X$ is said to \textbf{be between} $x\in X$ and $y \in X$ if $\mu(0)=x$ and $\mu(t)=y$.  A metric space $X$ is called a \textbf{geodesic space} if for every $x,y\in X$ there is a geodesic segment $\mu:[0,d(x,y)]\rightarrow X$ between $x$ and $y$. 
\end{definition}

\begin{definition}
	Given a geodesic metric space $(X,d)$, a {\bf geodesic triangle} $\Delta$ in $X$ consists of three vertices $a,b,c\in X$ and images of three geodesic segments between the three possible pairs of distinct vertices.
\end{definition}

\begin{definition}
	Let $(X,d)$ be a geodesic metric space and let $\Delta$ be a geodesic triangle in $X$ with vertices $a,b,$ and $c$. A {\bf comparison triangle} for $\Delta$ is a geodesic triangle $\Delta'$  in $\mathbb{R}^2$ (with the usual Euclidean metric) with vertices $a', b',$ and $c'$ such that $d(a,b)=\lVert  a'-b'\rVert,\,d(b,c)=\lVert b'-c'\rVert,$ and $d(a,c)=\lVert a'-c'\rVert$. 
\end{definition}

\begin{definition}
Let $(X,d)$ be a geodesic metric space and $\Delta$ a geodesic triangle in $X$ with vertices $a,b,$ and $c$. Let $\Delta'$ be a comparison triangle for $\Delta$ in $\mathbb{R}^2$ with vertices $a',b',$ and $c'$. Let $x \in \Delta$ belong to the image of the geodesic segment between $a$ and $b$. Then by $\bar{x}$ we mean the unique point in $\mathbb{R}^2$ such that $d(a,x)=\lVert a'-\bar{x}\rVert$ and $d(x,b)=\lVert \bar{x}-b'\rVert$.
\end{definition}

Notice that it is possible for $x \in \Delta$ to belong to more than one geodesic segment in $\Delta$, and thus the choice of $\bar{x}$ depends on the choice of the geodesic segment to which $x$ belongs.

\begin{definition}
	Let $(X,d)$ be a geodesic metric space. We say that $X$ is a {\bf Cat(0) space} if given a geodesic triangle $\Delta\subseteq X$ with comparison triangle $\Delta'\subseteq\mathbb{R}^{2},$ we have that for all $x,y$ in $\Delta,$ $d(x,y)\leq \lVert\bar{x}-\bar{y}\rVert$ for all possible choices of $\bar{x}$ and $\bar{y}.$
\end{definition}

The boundary of a proper and complete Cat$(0)$ space $X$ is going to be the set $\partial X$ of geodesic rays based at some fixed $x_{0}\in X$ equipped with a compact and Hausdorff topology. We will describe a topology on $\overline{X}=X\cup\partial X$ that makes $\overline{X}$ into a compact Hausdorff space and $\partial X$ a closed subset thereof. For distinct points $x,y\in X$, we will denote the unique image of a geodesic segment between them as $[x,y]$ (the uniqueness is proven in \cite{BridsonHaefliger}).
\vspace{\baselineskip}

Fix a basepoint $x_{0}\in X$. Let $\partial X$ be the set of geodesic rays in $X$ based at $x_{0}$. For each $\gamma\in\partial X$ and $R,\epsilon>0$, define
\[V(\gamma,R,\epsilon)=\{\alpha\in\partial X\mid d(\gamma(R),\alpha([0, \infty)))<\epsilon\},\]
\[C(\gamma,R,\epsilon)=\{x\in X\mid d(\gamma(R),[x_{0},x])<\epsilon\},\]
\[D(\gamma,R,\epsilon)=V(\gamma,R,\epsilon)\cup C(\gamma,R,\epsilon).\]

\noindent
Let $\mathcal{W}$ be the collection of sets of the form $D(\gamma,R,\epsilon)$ together with the open metric balls of $X$. It is then straightforward to show that $\mathcal{W}$ is a basis of a compact and Hausdorff topology on $\overline{X}$, and that $\partial X$ is a closed and dense subset of $\overline{X}$ in that topology. The compactification $\overline X$ is called the \textbf{visual compactification} of $X$ and the boundary $\partial X$ is called the \textbf{visual boundary} of the visual compactification $\overline{X}$.

With this setup in mind, we can describe a coarse proximity structure on a proper and complete Cat$(0)$ space $X$ whose corresponding boundary is the visual boundary of the visual compactification of $X$.

\begin{theorem}\label{visual coarse proximity}
	Let $(X,d)$ be a proper and complete Cat$(0)$ space with fixed point $x_{0}$. 
Define a relation ${\bf b}_{V,x_{0}}$ on $(X,\mathcal{B}_{C})$ by defining for all $A,B \subseteq X:$
	\begin{equation*}
\begin{split}
A{\bf b}_{V,x_{0}}B\iff & \exists(x_{n})\subseteq A,\,(y_{n})\subseteq B,\text{ and } \eta\in\partial X\text{ such that } \lim x_n=\lim y_n = \eta,\\
\end{split}
\end{equation*}
	where the above limits are taken in the visual compactification of $X$ constructed using the basepoint $x_{0}$. The triple $(X,\mathcal{B}_C,{\bf b}_{V,x_{0}})$ is a coarse proximity space whose boundary $\mathcal{U}_{{\bf b}_{V,x_{0}}}X$ is homeomorphic to the visual boundary $\partial X$.
\end{theorem} 
\begin{proof}
As the compactification $\overline{X}$ is first countable, we have that a point $\eta\in\partial X$ is in $cl_{\overline{X}}(A)$ for some $A\subseteq X$ if and only if there is a sequence $(x_{n})$ in $A$ that converges to $\eta$ in the visual compactification of $X$. Then, for subsets $A,B\subseteq X$ we have that $A{\bf b}_{V,x_{0}} B$ if and only if $tr_{\overline{X}, \partial X}(A)\cap tr_{\overline{X}, \partial X}(B)\neq\emptyset$. Then, $(X,\mathcal{B}_{C},{\bf b}_{V,x_{0}})$ is precisely the coarse proximity structure induced by the compactification $\overline{X}$. Consequently, by Theorem \ref{general_theorem} the boundary $\mathcal{U}_{{\bf b}_{V,x_{0}}}X$ is homeomorphic to $\partial X$, the visual boundary of $X$.
\end{proof}

\begin{definition}
For a proper and complete Cat$(0)$ space $(X,d)$, the coarse proximity structure $(X,\mathcal{B}_C,{\bf b}_{V,x_0})$ will be called the {\bf visual coarse proximity} on the proper and complete Cat$(0)$ space $X$, and ${\bf b}_{V,x_0}$ will be called the \textbf{visual coarse proximity}.
\end{definition}

We conclude this section with the remark that the choice of the basepoint does not change the topology on the visual boundary. For the sake of brevity we have not included an argument to that effect here. Interested readers may see \cite{BridsonHaefliger} for details on how to construct the boundary in a basepoint independent way. 

\section{The Higson Corona}\label{the higson boundary}

In this section, we show how the Higson corona of a proper metric space is a boundary of a particular coarse proximity space (whose underlying base space is that proper metric space).

Recall that $(X,d)$ is called \textbf{$k$-discrete} for some $k>0$, if, for any distinct $x,y \in X,$ we have $d(x,y) \geq k.$

\begin{theorem}\label{higson proximity}
	Let $(X,d)$ be a proper metric space, and let $\delta_H$ be a binary relation on the power set of $X$ defined by: 
	\[A\delta_H B \quad \Longleftrightarrow \quad d(A,B)=0  \text{ or } A{\bf b}_d B,\]
where $d(A,B)=\inf\{ d(x,y) \mid x \in A, y \in B \}.$ Then $\delta_H$ is a separated proximity that is compatible with the topology on $X.$ We will call this proximity the {\bf Higson proximity} associated to the metric space $(X,d).$
\end{theorem}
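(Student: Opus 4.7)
The plan is to verify, in order, the five proximity axioms for $\delta_H$, deduce separatedness, and finally check that the induced topology is the metric topology. Axioms (1)--(4) are routine, since $\delta_H$ is the disjunction of two relations---the metric proximity and the metric coarse proximity ${\bf b}_d$---each of which satisfies the corresponding axioms; for instance, distribution over unions in (4) uses $d(A,B\cup C)=\min(d(A,B),d(A,C))$ together with axiom (\ref{axiom4}) for ${\bf b}_d$. Separatedness follows because singletons are bounded, so $\{x\}{\bf b}_d\{y\}$ is forbidden by axiom (\ref{axiom2}) of a coarse proximity, reducing $\{x\}\delta_H\{y\}$ to $d(x,y)=0$, i.e.\ to $x=y$.

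The substantive step is the strong axiom. Given $A\bar{\delta}_H B$, so $d(A,B)>0$ and $A\bar{{\bf b}}_d B$, I would set $r=\tfrac{1}{3}d(A,B)>0$ and apply the strong axiom for ${\bf b}_d$ to obtain $F\subseteq X$ with $A\bar{{\bf b}}_d F$ and $(X\setminus F)\bar{{\bf b}}_d B$. Then define
\[ E = \{x\in X : d(x,A)\geq r\} \cap (F \cup N_r(B)), \]
where $N_r(B) = \{x : d(x,B) < r\}$. The triangle inequality and the choice of $r$ yield $d(A,E)\geq r$ and $d(X\setminus E, B)\geq r$. For the coarse half, observe that $N_r(B)$ has Hausdorff distance at most $r$ to $B$, so $N_r(B)\,\phi_d\,B$; hence $A\bar{{\bf b}}_d B$ forces $A\bar{{\bf b}}_d N_r(B)$, and combining with $A\bar{{\bf b}}_d F$ via axiom (\ref{axiom4}) together with the inclusion $E\subseteq F\cup N_r(B)$ gives $A\bar{{\bf b}}_d E$. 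The symmetric inclusion $X\setminus E\subseteq N_r(A)\cup(X\setminus F)$ gives $(X\setminus E)\bar{{\bf b}}_d B$ in the same way. These four conclusions together are exactly $A\bar{\delta}_H E$ and $(X\setminus E)\bar{\delta}_H B$.

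For compatibility with the metric topology I would invoke Theorem \ref{closure}: $A$ is $\delta_H$-closed iff $A\delta_H\{x\}\Rightarrow x\in A$, and since $A{\bf b}_d\{x\}$ is impossible (singletons are bounded), this reduces to $d(A,x)=0\Rightarrow x\in A$, recovering precisely the metric closure.

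The main obstacle will be constructing the right $E$. A naive union or intersection of the separators supplied by the metric and coarse strong axioms individually does not simultaneously control all four required conditions. The chosen $E$---the coarse separator $F\cup N_r(B)$ pruned to stay metrically $r$-away from $A$---balances metric- and coarse-separation at once; the key subsidiary fact is that $r$-neighborhoods are $\phi_d$-equivalent to the original set, transferring coarse non-closeness between $B$ and $N_r(B)$ (and between $A$ and $N_r(A)$).
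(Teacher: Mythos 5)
Your proposal is correct, and in fact it supplies more than the paper does: the paper's proof of this theorem consists only of the sentence ``The proof is straightforward,'' so there is no argument to compare yours against. The one step that genuinely requires work is the strong axiom, and your separator
\[
E=\{x\in X: d(x,A)\geq r\}\cap\bigl(F\cup N_r(B)\bigr),\qquad r=\tfrac13 d(A,B),
\]
does the job: $E\subseteq\{d(\cdot,A)\geq r\}$ gives $d(A,E)\geq r$; the inclusion $E\subseteq F\cup N_r(B)$ together with $N_r(B)\,\phi_d\,B$ (finite Hausdorff distance) and Corollary 6.18 of \cite{paper1} gives $A\bar{{\bf b}}_dE$; any $x\in X\setminus E$ has either $d(x,A)<r$ (whence $d(x,B)>2r$ by the triangle inequality) or $x\notin N_r(B)$ (whence $d(x,B)\geq r$), so $d(X\setminus E,B)\geq r$; and $X\setminus E\subseteq N_r(A)\cup(X\setminus F)$ gives $(X\setminus E)\bar{{\bf b}}_dB$. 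The remaining axioms, separatedness, and topological compatibility are handled exactly as you say, since singletons are bounded and hence never ${\bf b}_d$-related to anything. The only cosmetic omission is the degenerate case $A=\emptyset$ or $B=\emptyset$ (where $d(A,B)=\inf\emptyset=\infty$), but your construction, read literally, still works there.
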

\begin{proof}
The proof is straightforward.
\end{proof}

In other words, two subsets $A$ and $B$ of a proper metric space $X$ are close via the Higson proximity if the distance between them is $0$ or if they are metrically coarsely close. 
If $(X,d)$ is a $k$-discrete proper metric space for some $k>0,$ then the condition $d(A,B)=0$ can be replaced with $A\cap B\neq\emptyset$. Thus, when the $k$-discrete proper metric space is equipped with the metric coarse proximity ${\bf b}_d,$ the Higson proximity is the discrete extension of ${\bf b}_d,$ i.e.,
\[\delta_H=\delta_{dis, {\bf b}_d}.\]

\begin{definition}
	Let $(X,d)$ be a proper metric space. The {\bf Higson compactification} of $X$, denoted $hX$, is the Smirnov compactification of $(X,\delta_H)$. The {\bf Higson corona} (i.e., \textbf{Higson boundary}) is the corresponding Smirnov boundary $hX \setminus X$ and is customarily denoted by $\nu X$.
\end{definition}

The equivalence of the above definition with the standard way of constructing the Higson compactification (see \cite{Roe} or \cite{connectedcoronas} for the standard construction) has been given in \cite{Dranishnikovasymptotictopology}.

\begin{proposition}\label{Higson_corona_only_contains_unbounded_clusters}
	Let $(X,d)$ be a proper $k$-discrete metric space for some $k>0$. Then a cluster $\sigma\in hX$ is an element of the Higson corona $\nu X$ if and only if $\sigma$ does not contain any bounded sets.
\end{proposition}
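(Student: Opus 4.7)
The plan is to exploit the fact that in a proper $k$-discrete metric space, "bounded" collapses to "finite," and hence to "compact," after which the statement follows from Proposition \ref{elements of Smirnov boundary don't contain compact sets}.

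First I would establish the key reduction: in a proper $k$-discrete metric space $(X,d)$, every bounded set is finite (and therefore compact). Indeed, the $k$-discreteness forces the induced topology on $X$ to be discrete, while properness means every bounded set has compact closure. A compact subspace of a discrete space must be finite, so any bounded set sits inside a finite, hence compact, set. This is the one nontrivial ingredient, and it is essentially immediate.

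For the forward direction, suppose $\sigma \in \nu X = hX \setminus X$. By Proposition \ref{elements of Smirnov boundary don't contain compact sets} (applied to the separated proximity space $(X,\delta_H)$), no compact subset of $X$ lies in $\sigma$. If $B \in \sigma$ were bounded, then by the reduction above $B$ would be compact, contradicting Proposition \ref{elements of Smirnov boundary don't contain compact sets}. Hence $\sigma$ contains no bounded sets.

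For the converse, suppose $\sigma \in hX$ contains no bounded set. Since every singleton $\{x\}$ is bounded, $\sigma$ cannot be the point cluster $\sigma_x$ for any $x \in X$ (recall that $\{x\} \in \sigma_x$ under the identification of $X$ with its image in $hX$). Therefore $\sigma \in hX \setminus X = \nu X$. Combining both directions proves the equivalence. I expect no real obstacle here; the only subtle point is the observation that $k$-discreteness plus properness promotes boundedness to compactness, which is exactly what lets Proposition \ref{elements of Smirnov boundary don't contain compact sets} do all the work.
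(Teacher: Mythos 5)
Your proof is correct and follows essentially the same route as the paper: both reduce bounded sets to compact (indeed finite) sets via properness and $k$-discreteness, invoke Proposition \ref{elements of Smirnov boundary don't contain compact sets} for the forward direction, and observe for the converse that a cluster containing no bounded sets cannot be a point cluster since singletons are bounded. No gaps.
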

\begin{proof}
Bounded sets in $X$ are necessarily compact (actually finite) by the properness and the $k$-discreteness of $X$. Consequently, the forward direction follows from Proposition \ref{elements of Smirnov boundary don't contain compact sets}. Conversely, if $\sigma\in hX$ does not contain any bounded sets, then clearly $\sigma \neq \sigma_x$ for any $x \in X.$ Thus, $\sigma \in hX \setminus X=\nu X.$
\end{proof}

\begin{theorem}
If $(X,d)$ is a proper metric space, then $\mathcal{U}_{{\bf b}_d}X$  is homeomorphic to the Higson corona $\nu X.$
\end{theorem}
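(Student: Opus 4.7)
The plan is to apply Theorem \ref{general_theorem} to the Higson compactification $hX$, which is a compactification of the locally compact Hausdorff space $X$. That theorem produces a coarse proximity structure $(X, \mathcal{B}_C, {\bf b}_h)$ where $A {\bf b}_h C \iff tr_{hX,\nu X}(A) \cap tr_{hX,\nu X}(C) \neq \emptyset$, and whose boundary is \emph{a priori} homeomorphic to $\nu X$. Since for a proper metric space $\mathcal{B}_C = \mathcal{B}_d$, it suffices to prove the equality of relations ${\bf b}_h = {\bf b}_d$; then $(X, \mathcal{B}_C, {\bf b}_h) = (X, \mathcal{B}_d, {\bf b}_d)$ and the theorem follows immediately.

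As a preliminary, I would extend Proposition \ref{Higson_corona_only_contains_unbounded_clusters} to arbitrary proper metric spaces: if $\sigma \in \nu X$ contained a bounded $B$, then $cl(B)$ would be compact by properness, and Proposition \ref{prop_about_closures} combined with axiom (2) of a cluster would force $cl(B) \in \sigma$, contradicting Proposition \ref{elements of Smirnov boundary don't contain compact sets}. Thus $\nu X$ is exactly the set of clusters in $(X, \delta_H)$ containing no bounded subset, a fact I use in both directions below.

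For the forward inclusion ${\bf b}_d \subseteq {\bf b}_h$, I would exploit that $A {\bf b}_d B$ implies $(A \setminus D) {\bf b}_d (B \setminus D)$ for every $D \in \mathcal{B}_d$, hence $(A \setminus D) \delta_H (B \setminus D)$, so by Proposition \ref{cluster_containing_both} each $cl_{hX}(A \setminus D) \cap cl_{hX}(B \setminus D)$ is nonempty. Using $D_1 \cup \cdots \cup D_n$ in place of a single bounded set shows the family $\{cl_{hX}(A \setminus D) \cap cl_{hX}(B \setminus D) : D \in \mathcal{B}_d\}$ has the finite intersection property, and compactness of $hX$ yields $\sigma$ in the total intersection. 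This $\sigma$ contains $A$ and $B$ and, by the preliminary, cannot contain any bounded set, so $\sigma \in tr_{hX,\nu X}(A) \cap tr_{hX,\nu X}(B)$.

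For the converse ${\bf b}_h \subseteq {\bf b}_d$, suppose $\sigma \in \nu X$ contains both $A$ and $B$, and assume for contradiction $A \bar{\bf b}_d B$. Unwinding the definition at $\epsilon = 1$ gives a bounded $D$ with $d(A \setminus D, B \setminus D) \geq 1$. Since $\sigma$ contains no bounded set, axiom (3) of a cluster forces $A \setminus D, B \setminus D \in \sigma$, so axiom (1) gives $(A \setminus D) \delta_H (B \setminus D)$. By definition of $\delta_H$ this means either $d(A \setminus D, B \setminus D) = 0$, contradicting the choice of $D$, or $(A \setminus D) {\bf b}_d (B \setminus D)$, which by monotonicity contradicts $A \bar{\bf b}_d B$. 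I expect this converse to be the main obstacle, since $\delta_H$ bundles together two distinct ways of being close ($d = 0$ and ${\bf b}_d$), and the nontrivial content is precisely that on boundary clusters the small-scale contribution can always be excised, leaving only the coarse contribution.
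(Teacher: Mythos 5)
Your route is genuinely different from the paper's and is essentially sound. The paper explicitly avoids Theorem \ref{general_theorem} here: it passes to a $1$-discrete proper subset $X'$ coarsely equivalent to $X$, observes that on such a space $\delta_H$ coincides with the discrete extension $\delta_{dis,{\bf b}_d}$ so that $\mathcal{U}_{{\bf b}_d}X'$ \emph{equals} $\nu X'$ as a set of clusters (via Proposition \ref{Higson_corona_only_contains_unbounded_clusters}), and then transports the homeomorphism along the coarse equivalence using Corollary \ref{invariance_of_boundaries_under_proximal_coarse_equivalence} on one side and the classical coarse invariance of the Higson corona (Corollary 2.42 in \cite{Roe}) on the other. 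You instead stay on $X$ itself, feed the Higson compactification $hX$ into Theorem \ref{general_theorem}, and reduce everything to the identity ${\bf b}_h={\bf b}_d$, i.e.\ to the trace characterization $A{\bf b}_dB\iff tr_{hX,\nu X}(A)\cap tr_{hX,\nu X}(B)\neq\emptyset$. Your version is more self-contained (no appeal to the external coarse invariance of $\nu$, no discretization trick) and ties the Higson section to the general machinery of Section \ref{compact hausdorff spaces as boundaries of coarse proximity spaces}; the paper's version is shorter but outsources the real content to two cited facts. Your converse inclusion is correct as written, and correctly isolates the key point that on boundary clusters the small-scale part of $\delta_H$ can be excised after deleting a bounded set.

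One step needs patching. In the forward inclusion you produce $\sigma\in\bigcap_{D\in\mathcal{B}_d}\bigl(cl_{hX}(A\setminus D)\cap cl_{hX}(B\setminus D)\bigr)$ and then assert that "by the preliminary" $\sigma$ contains no bounded set. That is backwards: your preliminary lets you deduce "no bounded sets" \emph{from} $\sigma\in\nu X$, whereas here you must first rule out that $\sigma$ is a point cluster. The analogous step in Proposition \ref{nonempty trace} is easy for $\delta_{dis}$ because a bounded set is never $\delta_{dis}$-close to a disjoint unbounded set, but for $\delta_H$ a bounded set can be close to its complement via the $d(\cdot,\cdot)=0$ clause. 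The fix is one line but uses the metric: if $\sigma=\sigma_x$, then $\{x\}\delta_H(A\setminus D)$ for every bounded $D$; taking $D$ to be the closed unit ball around $x$ gives $d\bigl(\{x\},A\setminus D\bigr)\geq 1$ and $\{x\}\bar{\bf b}_d(A\setminus D)$ since $\{x\}$ is bounded, hence $\{x\}\bar{\delta}_H(A\setminus D)$, a contradiction. With that insertion (and the same neighborhood-excision if you prefer to argue directly that no bounded $C$ lies in $\sigma$, using $D=N(C,1)$), your argument is complete.
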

\begin{proof}
If $(X,d)$ is a proper metric space, then it is well-known that $X$ is coarsely equivalent to a $1$-discrete proper subset $X^{\prime}$ of itself. Because 
\[\delta_H=\delta_{dis, {\bf b}_d},\]
Proposition \ref{Higson_corona_only_contains_unbounded_clusters} tells us that $\mathcal{U}_{{\bf b}_d}X^{\prime}$ is the Higson corona $\nu X'$ of $X^{\prime}.$ We also know that
\begin{itemize}
\item  coarsely equivalent spaces have homeomorphic Higson coronas (see Corollary 2.42 in \cite{Roe}),
\item $\mathcal{U}_{{\bf b}_d}X$ is homeomorphic to $\mathcal{U}_{{\bf b}_d}X^{\prime}$ (coarsely equivalent metric spaces are coarse proximity isomorphic by Corollary $7.5$ in \cite{paper1}, and thus by Corollary \ref{invariance_of_boundaries_under_proximal_coarse_equivalence} their boundaries are homeomorphic).
\end{itemize}
Thus, if $\approx$ denotes a homeomorphism, we have that
\[\mathcal{U}_{{\bf b}_d}X \approx \mathcal{U}_{{\bf b}_d}X'= \nu X' \approx \nu X\]
which shows that $\mathcal{U}_{{\bf b}_d}X$ is homeomorphic to the Higson corona $\nu X.$
\end{proof}

\section{The Freudenthal Boundary}\label{the freudenthal boundary}
In this section, we explicitly construct (without using Theorem \ref{general_theorem}) a coarse proximity structure on a locally compact Hausdorff space such that the boundary of that coarse proximity space is the Freudenthal boundary. 

\begin{definition}
Let $X$ be a topological space and $A$ and $B$ two subsets of $X.$ We say that $A$ and $B$ are \textbf{separated by a compact set} if there exists a compact set $K\subseteq X$ and open sets $U_1, U_2 \subseteq X$ such that 
\begin{enumerate}
\item $X \setminus K=U_1 \cup U_2$,
\item $U_1 \cap U_2 = \emptyset,$
\item $A \subseteq U_1$ and $B\subseteq U_2.$
\end{enumerate}
\end{definition}

The following proximity characterization of the Freudenthal compactification of a locally compact Hausdorff space comes from \cite{isbell}.

\begin{definition}\label{Fcompactification}
	If $X$ is a locally compact Hausdorff space, then the {\bf Freudenthal compactification} of $X$ is the Smirnov compactification of the separated proximity $\delta_F$ on $X$, called the \textbf{Freudenthal proximity}, defined by 
	\[A\bar{\delta}_FB \quad \Longleftrightarrow \quad A \text{ and } B \text{ are separated by a compact set.}\] 
	We will denote the Freudenthal compactification of $X$ by $FX$.
\end{definition}

Recall that Proposition \ref{elements of Smirnov boundary don't contain compact sets} tells us the elements of $FX\setminus X$ do not contain any compact sets. Thus, if a bornology on $X$ consists of all precompact sets (denoted $\mathcal{B}_C$), then elements of $FX\setminus X$ do not contain any bounded sets (since clusters are closed under taking supersets).

\begin{proposition}\label{freudenthal coarse proximity}
	Let $X$ be a locally compact Hausdorff space. Define the relation ${\bf b}_F$ on the power set of $X$ by:
	\vspace{.3em}
	
	\noindent $A\bar{\bf b}_FB$ if and only if there are compact sets $D,K\subseteq X$ such that $(A\setminus D)$ and  $(B\setminus D)$ are separated by $K.$
	\vspace{.3em}
	
\noindent Then $(X,\mathcal{B}_C,{\bf b}_F)$ is a coarse proximity space.
\end{proposition}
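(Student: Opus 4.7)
My plan is to first make the key reformulation that cleans up the whole argument: the relation ${\bf b}_F$ is equivalent to
\[A\bar{\bf b}_F B \iff \exists D\in\mathcal{B}_C \text{ such that } (A\setminus D)\bar{\delta}_F (B\setminus D),\]
because ``separated by the compact set $K$'' is exactly the negation of the Freudenthal proximity $\delta_F$ from Definition \ref{Fcompactification}. With this reformulation, each of the five axioms of a coarse proximity reduces to an application of the corresponding axiom (or a basic monotonicity property) of the separated proximity $\delta_F$, together with a small amount of bookkeeping about unions of compact sets.

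Axioms (1) and (3) fall out immediately: symmetry is inherited from $\delta_F$, and if $A$ is precompact then taking $D=cl(A)$ gives $A\setminus D=\emptyset$, which is trivially $\bar{\delta}_F$-separated from anything (set $U_1=\emptyset$, $U_2=X$, $K=\emptyset$). For axiom (2), if $A\cap B\notin\mathcal{B}_C$ then for every compact $D$ the set $(A\setminus D)\cap(B\setminus D)=(A\cap B)\setminus D$ is nonempty, so no disjoint open separation modulo a compact $K$ can exist. Axiom (4) in the forward direction comes from the analogous axiom for $\delta_F$: given witnesses $D_1$ for $A\bar{\bf b}_F C$ and $D_2$ for $B\bar{\bf b}_F C$, take $D=D_1\cup D_2$ (compact), note $\bar{\delta}_F$ is preserved under passing to subsets, and then apply the union axiom of $\delta_F$ to conclude $((A\cup B)\setminus D)\bar{\delta}_F(C\setminus D)$. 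The converse direction of (4) is immediate from the same monotonicity.

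The main obstacle will be the strong axiom (5). Starting from $A\bar{\bf b}_F B$ with witness compact set $D$ so that $(A\setminus D)\bar{\delta}_F (B\setminus D)$, I will apply the strong axiom of the proximity $\delta_F$ to this pair to obtain $E\subseteq X$ with
\[(A\setminus D)\bar{\delta}_F E \quad \text{and} \quad (X\setminus E)\bar{\delta}_F (B\setminus D).\]
Then I will check that this same $E$ witnesses both $A\bar{\bf b}_F E$ and $(X\setminus E)\bar{\bf b}_F B$, in both cases reusing $D$ as the compact ``throw-away'' set. For $A\bar{\bf b}_F E$, monotonicity of $\bar{\delta}_F$ upgrades $(A\setminus D)\bar{\delta}_F E$ to $(A\setminus D)\bar{\delta}_F (E\setminus D)$. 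For $(X\setminus E)\bar{\bf b}_F B$, the inclusion $(X\setminus E)\setminus D\subseteq X\setminus E$ together with monotonicity gives $((X\setminus E)\setminus D)\bar{\delta}_F (B\setminus D)$. Each step is a one-line use of the fact that $\bar{\delta}_F$ is downward closed under subset inclusion on both sides, so the real content lies entirely in invoking the strong axiom of $\delta_F$ just once on the pair $(A\setminus D, B\setminus D)$.
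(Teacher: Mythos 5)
Your proof is correct, but it reaches the strong axiom by a different route than the paper. The paper's proof handles only the strong axiom and does so by hand: from the witnessing data for $A\bar{\bf b}_FB$ (compact sets $D,K$ and disjoint open sets $U,V$ covering $X\setminus K$ with $A\setminus D\subseteq U$ and $B\setminus D\subseteq V$) it simply takes $E=V$ and checks directly that $A\bar{\bf b}_FV$ and $(X\setminus V)\bar{\bf b}_FB$, so it never needs to know that $\delta_F$ satisfies the strong axiom. You instead reformulate ${\bf b}_F$ through $\delta_F$ (exactly the observation the paper records right after the proposition), invoke the strong axiom of $\delta_F$ once on the pair $(A\setminus D,\,B\setminus D)$, and transport the resulting $E$ back using downward monotonicity of $\bar{\delta}_F$, reusing the same compact $D$ as witness on both sides; this is clean and correct, but it outsources the real separation step to the cited fact (Definition \ref{Fcompactification}, from \cite{isbell}) that $\delta_F$ is a separated proximity, whereas the paper's construction is self-contained and in effect reproves that instance of the strong axiom for $\delta_F$ explicitly. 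Your treatment of the remaining axioms via monotonicity and the union axiom of $\delta_F$ is fine (the paper omits these as routine); the only blemish is that you have swapped the labels of axioms (2) and (3) relative to Definition \ref{coarseproximitydefinition}, though the content of both verifications is right.
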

\begin{proof}
	The only axiom that is not clear is the strong axiom. Let $A,B\subseteq X$ be such that $A\bar{\bf b}_FB$. Then by definition there are compact sets $D$ and $K$ such that there are disjoint open sets $U,V\subseteq X$ that are disjoint from $K$, cover $X\setminus K$, and contain $A\setminus D$ and $B\setminus D$, respectively. Then $V$ is a subset of $X$ such that $A\bar{\bf b}_FV$ and $(X\setminus V)\bar{\bf b}_FB$. Thus, the triple $(X,\mathcal{B}_C,{\bf b}_F)$ is a coarse proximity space.
\end{proof}

Notice that with the notation from Definition \ref{Fcompactification} and Proposition \ref{freudenthal coarse proximity}, we have that 
\[A{\bf b}_FB \quad \Longleftrightarrow \quad (A\setminus D) \delta_F (B \setminus D) \text{ for all bounded sets }D.\]

\begin{definition}
	For a locally compact Hausdorff space $X,$ the coarse proximity structure $(X,\mathcal{B}_C,{\bf b}_F)$ will be called the {\bf Freudenthal coarse proximity structure} on $X$, and ${\bf b}_F$ will be called the  {\bf Freudenthal coarse proximity}.
\end{definition}

Our goal is to show that $FX \setminus X$ is homeomorphic to $\mathcal{U}_{{\bf b}_F}X.$ The two following lemmas are used to prove Proposition \ref{freudenthal boundary is boundary of freudenthal coarse proximity structure}, which says that clusters in $FX \setminus X$ are exactly those in $\mathcal{U}_{{\bf b}_F}X.$

\begin{lemma}\label{closeness in Freudenthal clusters is realized by coarse closeness}
	Let $X$ be a locally compact Hausdorff space. If $A,B\subseteq X$ are sets such that $A,B \in \sigma$ for some $\sigma \in FX\setminus X,$ then $A{\bf b}_FB.$
\end{lemma}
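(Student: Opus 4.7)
The plan is to unwind the definitions and invoke the two key results about clusters in Smirnov boundaries, namely Proposition \ref{elements of Smirnov boundary don't contain compact sets} and Proposition \ref{can subtract by compact sets in cluster}, applied to the Freudenthal proximity $\delta_F$. Unpacking the definition of ${\bf b}_F$, the conclusion $A{\bf b}_FB$ is equivalent to asserting that for every compact $D\subseteq X$, the sets $A\setminus D$ and $B\setminus D$ cannot be separated by any compact set, i.e., $(A\setminus D)\,\delta_F\,(B\setminus D)$ for all compact $D$.

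Concretely, I would first fix an arbitrary compact $D\subseteq X$ and observe that, because $\sigma\in FX\setminus X$, Proposition \ref{can subtract by compact sets in cluster} (applied to $\delta_F$, which is a separated proximity on $X$) gives $A\setminus D\in\sigma$ and $B\setminus D\in\sigma$. Then axiom (1) in the definition of a cluster immediately yields $(A\setminus D)\,\delta_F\,(B\setminus D)$, so by the definition of $\delta_F$ these two sets are not separated by any compact $K\subseteq X$. Since $D$ was arbitrary, this exactly contradicts the existence of compact sets $D, K$ witnessing $A\bar{\bf b}_FB$, so $A{\bf b}_FB$ as required.

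There is no real obstacle here; the only subtlety is checking that Proposition \ref{can subtract by compact sets in cluster} is genuinely applicable, which needs only that $\sigma$ lies in the Smirnov boundary of the separated proximity space $(X,\delta_F)$ — this is exactly the hypothesis $\sigma\in FX\setminus X$. No appeal to the bornology $\mathcal{B}_C$ or to the coarse proximity axioms is necessary at this stage; the lemma is really a direct translation between the cluster structure of the Freudenthal proximity and the definition of ${\bf b}_F$ in terms of "eventual" non-separation by compacta.
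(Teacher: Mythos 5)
Your proposal is correct and matches the paper's own argument: both proofs apply Proposition \ref{can subtract by compact sets in cluster} to the separated proximity $\delta_F$ to get $(A\setminus D),(B\setminus D)\in\sigma$ for any compact $D$, then use the first cluster axiom to conclude $(A\setminus D)\,\delta_F\,(B\setminus D)$, contradicting any witness of $A\bar{\bf b}_FB$. The only cosmetic difference is that the paper also remarks (via Proposition \ref{elements of Smirnov boundary don't contain compact sets}) that $A$ and $B$ are unbounded, which is not actually needed for the argument.
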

\begin{proof}
	Since $A,B \in \sigma,$ Proposition \ref{elements of Smirnov boundary don't contain compact sets} implies that both $A$ and $B$ are unbounded. For contradiction, assume that $A\bar{\bf b}_FB.$ Then there is a compact set $D \subseteq X$ such that $(A\setminus D) \bar{\delta}_F (B\setminus D).$ However, by Proposition \ref{can subtract by compact sets in cluster}, $(A\setminus D)$ and $(B\setminus D)$ are elements of $\sigma,$ which in particular means that $(A\setminus D)\delta_F(B\setminus D),$ a contradiction.
\end{proof}

\begin{lemma}\label{second_axiom_lemma}
Let $X$ be a locally compact Hausdorff space. Let $A$ and $B$ be susbets of $X.$ Then
\[(A\delta_F B \text{ and } A\bar{\bf b}_FB) \quad \Longrightarrow \quad cl_X(A) \cap cl_X(B) \text{ is compact. }\]
\end{lemma}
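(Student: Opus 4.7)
The plan is to extract the content of the hypothesis $A\bar{\bf b}_F B$ and show directly that $cl_X(A) \cap cl_X(B)$ lies inside a concrete compact set. Unpacking the definition of $\bar{\bf b}_F$ produces compact sets $D, K \subseteq X$ together with disjoint open sets $U_1, U_2 \subseteq X$ such that $X\setminus K = U_1 \cup U_2$, $A\setminus D \subseteq U_1$, and $B\setminus D \subseteq U_2$. My target inclusion will be
\[cl_X(A) \cap cl_X(B) \subseteq K \cup D,\]
after which compactness will follow immediately.

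The key topological observation is that disjoint open sets have closures whose intersection is forced into the complement of their union: since $U_1 \cap U_2 = \emptyset$ and both sets are open, $cl_X(U_1) \subseteq X\setminus U_2$ and $cl_X(U_2) \subseteq X\setminus U_1$, so
\[cl_X(U_1) \cap cl_X(U_2) \subseteq X\setminus(U_1\cup U_2) = K.\]
Separately, because $X$ is Hausdorff and $D$ is compact, $D$ is closed in $X$; combined with $A \subseteq U_1 \cup D$ and $B \subseteq U_2 \cup D$, taking closures gives $cl_X(A) \subseteq cl_X(U_1) \cup D$ and $cl_X(B) \subseteq cl_X(U_2) \cup D$. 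Distributing the intersection of these two inclusions and absorbing every cross-term involving $D$ into $D$ itself will yield the desired $cl_X(A) \cap cl_X(B) \subseteq K\cup D$.

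To finish, I would observe that $K \cup D$ is compact as a finite union of compact sets and closed because $X$ is Hausdorff, so $cl_X(A) \cap cl_X(B)$ is a closed subset of a compact Hausdorff space and is therefore compact. I do not anticipate a real obstacle: the argument is a short bookkeeping exercise that uses only Hausdorffness (to guarantee compact sets are closed) together with the openness and disjointness of $U_1$ and $U_2$. In particular, the hypothesis $A\delta_F B$ is not used in this step and is presumably recorded in the statement only because of how the lemma will be invoked in the subsequent proof.
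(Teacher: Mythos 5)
Your proof is correct, and it follows the same skeleton as the paper's: unpack $A\bar{\bf b}_FB$ to obtain the compact sets $D,K$ and the separating open sets $U_1,U_2$, prove the inclusion $cl_X(A)\cap cl_X(B)\subseteq K\cup D$, and conclude by noting that a closed subset of a compact set is compact. The only real difference is how the inclusion is verified. The paper argues pointwise by contradiction: it takes $x\in cl_X(A)\cap cl_X(B)$ with $x\notin K\cup D$, places $x$ in (say) $U_1$, and uses the regularity of $X$ (available from local compactness plus Hausdorffness) to manufacture a neighborhood $V_1\cap U_1$ of $x$ missing $B$. Your argument replaces this with the purely formal computation $cl_X(U_1)\cap cl_X(U_2)\subseteq X\setminus(U_1\cup U_2)=K$ together with $cl_X(A)\subseteq cl_X(U_1)\cup D$ and $cl_X(B)\subseteq cl_X(U_2)\cup D$ (using that the compact set $D$ is closed in a Hausdorff space), followed by distributing the intersection. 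This is slightly more economical: it uses only Hausdorffness rather than regularity, so local compactness plays no role in this lemma, and it avoids the case analysis and the contradiction. Your closing observation is also consistent with the paper: the hypothesis $A\delta_FB$ is never used in the paper's proof either, and is present only because of how the lemma is invoked in Proposition \ref{freudenthal boundary is boundary of freudenthal coarse proximity structure}.
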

\begin{proof}
Since $A\bar{\bf b}_FB,$ there exist compact $D$ and compact $K$ such that $X \setminus K = U_1 \cup U_2$ for some open sets $U_1$ and $U_2$ that are disjoint and $A \setminus D \subseteq U_1$ and $B \setminus D \subseteq U_2.$ If we show that $cl_X(A) \cap cl_X(B) \subseteq K\cup D,$ then $cl_X(A) \cap cl_X(B)$ is a closed subset of a compact space, and consequently is compact. To see that $cl_X(A) \cap cl_X(B) \subseteq K\cup D,$ let $x$ be an element of $cl_X(A) \cap cl_X(B).$ For contradiction, assume that $x \notin K\cup D.$ Since $x \notin K,$ without loss of generality we can assume that $x \in U_1.$ Since $X$ is regular, let $V_1$ and $V_2$ be open sets such that $x \in V_1, K \cup D \subseteq V_2,$ and $V_1 \cap V_2 = \emptyset.$ Then $x$ belongs to an open set $V_1 \cap U_1$ that is disjoint from $B$ (since $B\setminus D$ does not intersects $U_1$ and $D$ does not intersect $V_1$). This contradicts the fact that $x$ is in the closure of $B.$
\end{proof}

\begin{proposition}\label{freudenthal boundary is boundary of freudenthal coarse proximity structure}
	Let $X$ be a locally compact Hausdorff space. Let $\sigma$ be a collection of susbets of $X$. Then
\[\sigma \in FX \setminus X \Longleftrightarrow \sigma \in \mathcal{U}_{{\bf b}_F}X.\]
\end{proposition}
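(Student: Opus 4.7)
The plan is to verify the three cluster axioms in each direction. A key observation used throughout is the inclusion $\delta_{dis,{\bf b}_F}\subseteq\delta_F$: if $A\cap B\neq\emptyset$ then no compact set can separate $A$ from $B$ (a common point would have to lie in both of two disjoint open sets), and if $A\,{\bf b}_F\,B$ then taking $D=\emptyset$ in the definition of ${\bf b}_F$ gives $A\,\delta_F\,B$.

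For the forward direction, assume $\sigma\in FX\setminus X$. First, $\sigma$ contains no bounded set: if $A\in\sigma$ were precompact, then the cluster axioms for $\sigma$ under $\delta_F$ (applied to $cl_X(A)\supseteq A$) would force $cl_X(A)\in\sigma$, contradicting Proposition~\ref{elements of Smirnov boundary don't contain compact sets}. The cluster axioms for $\sigma$ under $\delta_{dis,{\bf b}_F}$ are then routine: axiom~(1) is immediate from Lemma~\ref{closeness in Freudenthal clusters is realized by coarse closeness} (any two elements of $\sigma$ are ${\bf b}_F$-close, hence $\delta_{dis,{\bf b}_F}$-close); axiom~(2) reduces to the corresponding axiom for $\delta_F$ via the inclusion $\delta_{dis,{\bf b}_F}\subseteq\delta_F$; axiom~(3) is set-theoretic.

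For the backward direction, assume $\sigma\in\mathcal{U}_{{\bf b}_F}X$. Absence of compact sets in $\sigma$ is automatic; cluster axiom~(1) under $\delta_F$ follows from the inclusion $\delta_{dis,{\bf b}_F}\subseteq\delta_F$; axiom~(3) is set-theoretic. The substance of the proof lies in axiom~(2): assuming $A\,\delta_F\,B$ for every $B\in\sigma$, one must show $A\in\sigma$. I would argue by contradiction, supposing there is $B_0\in\sigma$ with $A\cap B_0=\emptyset$ and $A\,\bar{\bf b}_F\,B_0$. Lemma~\ref{second_axiom_lemma} then makes $K:=cl_X(A)\cap cl_X(B_0)$ compact, and the goal is to exhibit some $C\in\sigma$ with $A\,\bar{\delta}_F\,C$, contradicting the hypothesis.

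Using local compactness, enlarge the compact witness $D_0$ of $A\,\bar{\bf b}_F\,B_0$ to a compact $D\supseteq D_0$ with $K\subseteq\mathrm{int}(D)$; the inclusion $cl_X(A\cap D)\cap cl_X(B_0\setminus D)\subseteq K\cap\partial D=\emptyset$ then holds automatically. Since the compact set $cl_X(A\cap D)$ is disjoint from the closed set $cl_X(B_0\setminus D)$, local compactness provides an open $V_1$ with compact closure satisfying $cl_X(A\cap D)\subseteq V_1$ and $cl_X(V_1)\cap cl_X(B_0\setminus D)=\emptyset$; set $V_2:=X\setminus cl_X(V_1)$. Let $K_0$ and the disjoint open sets $U_1\supseteq A\setminus D$, $U_2\supseteq B_0\setminus D$ witness $(A\setminus D)\,\bar{\delta}_F\,(B_0\setminus D)$. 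Then $W_1:=U_1\cup V_1$ and $W_2:=U_2\cap V_2$ are disjoint open sets containing $A$ and $B_0\setminus D$ respectively, and a direct computation identifies the complement $X\setminus(W_1\cup W_2)$ with $(K_0\setminus V_1)\cup(cl_X(V_1)\setminus(U_1\cup V_1))$, a union of two compact sets. This yields $A\,\bar{\delta}_F\,(B_0\setminus D)$; together with $B_0\setminus D\in\sigma$ (by Lemma~\ref{phi_relation_closed}), we reach the desired contradiction. I expect the main obstacle to be precisely this splicing step---combining the coarse separation of $A\setminus D$ from $B_0\setminus D$ with the small-scale separation inside $D$ into a single compact-set separation of all of $A$ from $B_0\setminus D$. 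Choosing $D$ with $K$ in its interior is what makes the boundary pieces disjoint and allows the assembly to go through.
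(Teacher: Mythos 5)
Your proof is correct, and its skeleton matches the paper's: verify the cluster axioms in both directions, isolate axiom (2) of the backward direction as the crux, and use Lemma \ref{second_axiom_lemma} to make $cl_X(A)\cap cl_X(B_0)$ compact before thickening it to a compact neighbourhood. Where you genuinely diverge is in how that crux is finished. The paper, having reduced to $C\,\delta_F\,(B\setminus W')$ with $C\,\bar{\bf b}_F\,B$, invokes Proposition \ref{cluster_containing_both} to produce a cluster $\sigma_2$ in $FX$ containing both $C$ and $B\setminus W'$, proves through a sequence of claims (using regularity) that $\sigma_2$ is not a point cluster, and then applies Proposition \ref{can subtract by compact sets in cluster} to conclude $(C\setminus K)\,\delta_F\,(B\setminus K)$ for every compact $K$, i.e.\ $C\,{\bf b}_F\,B$, contradicting $C\,\bar{\bf b}_F\,B$. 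You instead stay entirely at the level of open and compact sets: you splice the large-scale separation of $A\setminus D$ from $B_0\setminus D$ with a small-scale separation of the compact piece $cl_X(A\cap D)$ from $cl_X(B_0\setminus D)$ supplied by local compactness, and verify directly that $X\setminus(W_1\cup W_2)=(K_0\setminus V_1)\cup\bigl(cl_X(V_1)\setminus(U_1\cup V_1)\bigr)$ is a union of two compact sets; this yields $A\,\bar{\delta}_F\,(B_0\setminus D)$ and contradicts the hypothesis at the element $B_0\setminus D\in\sigma$. I checked the details: the identification of the complement is right, the disjointness $W_1\cap W_2=\emptyset$ follows from $U_1\cap U_2=\emptyset$ and $V_1\cap V_2=\emptyset$, and the choice $K\subseteq\mathrm{int}(D)$ is exactly what forces $cl_X(A\cap D)\cap cl_X(B_0\setminus D)\subseteq K\cap\partial D=\emptyset$ so that $V_1$ exists. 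The trade-off is that the paper's route recycles general cluster machinery already developed and so is shorter to write given Propositions \ref{cluster_containing_both} and \ref{can subtract by compact sets in cluster}, whereas yours is self-contained and constructive, and in effect isolates the sharper statement that $A\,\delta_F\,B_0$, $A\cap B_0=\emptyset$, and $A\,\bar{\bf b}_F\,B_0$ together force $A\,\bar{\delta}_F\,(B_0\setminus D)$ for a suitable compact $D$ --- which is the real compatibility between $\delta_F$ and ${\bf b}_F$ that the proposition rests on.
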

\begin{proof}
($\Longrightarrow$)
	Let $\sigma\in FX\setminus X$ be given. By Proposition \ref{elements of Smirnov boundary don't contain compact sets}, $\sigma$ does not contain any bounded sets. Thus, it is enough to show that $\sigma$ satisfies all the axioms of a cluster under $\delta_{dis, {\bf b}_F}$. This will show that $\sigma \in \mathcal{U}_{{\bf b}_F}X.$ To see the first axiom of a cluster, notice that by Lemma \ref{closeness in Freudenthal clusters is realized by coarse closeness}, we have that if $A,B\in\sigma$ then $A{\bf b}_FB,$ which shows that $A \delta_{dis, {\bf b}_F} B$. To prove the second axiom of a cluster, notice that 
\begin{equation*}
\begin{split}
\forall  \, B \in \sigma, A\delta_{dis, {\bf b}_F} B  & \Longleftrightarrow \forall \, B \in \sigma, A \cap B \neq  \emptyset \text{ or } A{\bf b}_F B\\
&\Longleftrightarrow \forall \, B \in \sigma, A \cap B \neq  \emptyset \text{ or } (A\setminus D) \delta_F (B\setminus D) \, \forall \, D \in \mathcal{B}_C\\
&\Longrightarrow \forall \, B \in \sigma, A \cap B \neq  \emptyset \text{ or } A \delta_F B\\
&\Longrightarrow \forall \, B \in \sigma,  A \delta_F B\\
&\Longrightarrow A \in \sigma.
\end{split}
\end{equation*}
The last axiom of a cluster is true, since $\sigma$ is also a cluster in $FX \setminus X$ by assumption.
	
($\Longleftarrow$)
Let $\sigma \in \mathcal{U}_{{\bf b}_F}X$ be given.	By definition, $\sigma$ does not contain any compact sets. Thus, if $\sigma$ is a cluster in $FX,$ then it clearly is a cluster in $FX \setminus X.$ To see that $\sigma$ is a cluster in $FX,$ we need to show the three axioms of a cluster:
\begin{enumerate}
\item if $A,B\in\sigma$ then $A\delta_F B$,
\item if $C\subseteq X$ is such that $C\delta_F A$ for all $A\in\sigma$, then $C\in\sigma$
\item if $A\cup B \in \sigma$, then $A \in \sigma$ or $B \in \sigma.$
\end{enumerate}

Axiom 3 is obvious, since $\sigma$ is also a cluster in $\mathcal{U}_{{\bf b}_F}X$ by assumption. To see axiom 1, assume  $A,B\in\sigma$. Note that if $A\bar{\delta}_FB,$ then $A$ and $B$ can be separated by a compact set. In particular, they are disjoint and  $A\bar{\bf b}_FB$. But this is a contradiction to $A\delta_{dis, {\bf b}_F}B.$ Thus, $A\delta_FB$. 

Finally, we need to prove the second axiom, which will finish the proof. Let $C\subseteq X$ be such that $C\delta_F A$ for all $A\in\sigma$. For contradiction, assume that $C \notin \sigma.$ Then there has to exist $B \in \sigma$ such that $C \bar{\delta}_{dis, {\bf b}_F} B$ (otherwise, since $\sigma$ is a cluster in $\mathcal{U}_{{\bf b}_F}X$ it would imply that $C \in \sigma$ by the second axiom of a cluster). In particular, this implies that $C \bar{\bf b}_F B.$ Since we have that $C \delta_F B$ and $C \bar{\bf b}_F B,$ Lemma \ref{second_axiom_lemma} tells us that $W=cl_X(C) \cap cl_X(B)$ is compact. Since $X$ is also locally compact, we can find a finite open cover $U_1,...,U_n$ of $W$ such that each element in that cover has compact closure.  Define
	\[W'=\bigcup_{i=1}^n cl_X(U_i).\]
It is clear that $W \subseteq W'$ and $W'$ is compact. Since $B\in \sigma$ and $W'$ is compact, $B\setminus W' \in \sigma.$ Consequently, by the assumption about $C$ we have $C \delta_F (B \setminus W').$ Proposition \ref{cluster_containing_both} implies then that there exists a cluster $\sigma_2$ in $FX$ that contains both $C$ and $B \setminus W'.$ To show that $\sigma_2$ is not a point cluster, we prove a series of claims:\\
\textbf{Claim 1}: $cl_X(C) \cap cl_X(B \setminus W') = \emptyset.$ \\
Notice that $cl_X(C) \cap cl_X(B \setminus W') \subseteq cl_X(C) \cap cl_X(B).$ Thus, showing that $cl_X(B \setminus W')$ is disjoint from $cl_X(C) \cap cl_X(B),$ proves Claim 1. Thus, to  prove Claim 1 it is enough to prove:\\
\textbf{Claim 2}: $cl_X(B \setminus W')$ is disjoint from $cl_X(C) \cap cl_X(B).$\\
To see that claim 2 is true, notice that 
\[cl_X(C) \cap cl_X(B) =W \subseteq \text{int}(W') \subseteq X \setminus cl_X(B \setminus W'),\] 
where $\text{int}(W')$ denotes the interior of $W'$ in $X$.  This proves Claim 2 and consequently Claim 1.\\
\textbf{Claim 3}: $\sigma_2$ is not a point cluster, i.e., $\sigma_2 \in FX \setminus X.$ \\
To see that Claim 3 is true, for contradiction assume that there exists $x \in X$ such that $\{x\} \in \sigma_2.$ Under the assumptions of Claim 3, we will show that\\
\textbf{Claim 4: } neither $cl_X(C)$ nor $cl_X(B \setminus W')$ contains $x$. \\
If $x \in cl_X(C),$ then by Claim 1 we have that $cl_X(C) \cap cl_X(B \setminus W') = \emptyset.$ Thus, by regularity we can find an open set $V$ that contains $x$ and whose closure is disjoint from $cl_X(B \setminus W').$ By local compactness we can assume that the closure of $V$ is compact. In particular, the boundary of $V$ is compact. Consequently, the boundary of $V$ is a separating compact set for $x$ and $cl_X(B \setminus W').$ In other words, $\{x\} \bar{\delta}_F cl_X(B \setminus W').$ But this is a contradiction, since both $x$ and $cl_X(B \setminus W')$ are in $\sigma_2$ and thus should be $\delta_F$-close. Consequently, $x \notin cl_X(C).$ Similarly one can show that $x \notin cl_X(B \setminus W').$ Thus, $x$ is in neither $cl_X(C)$ nor $cl_X(B \setminus W').$ This proves Claim 4.\\
\textbf{Claim 3 proof cont.:} Notice that Claim $4$ implies that $x \in X \setminus (cl_X(C)\cup cl_X(B \setminus W')).$ But since $cl_X(C)\cup cl_X(B \setminus W')$ is closed, by the same argument that uses regularity we can show that this implies that $\{x\} \bar{\delta}_F (cl_X(C)\cup cl_X(B \setminus W')),$ a contradiction (since $cl_X(C)$ is in $\sigma_2,$ $cl_X(C)\cup cl_X(B \setminus W')$ is in $\sigma_2,$ and consequently $cl_X(C)\cup cl_X(B \setminus W')$ should be $\delta_F$-close to $x$). Thus, it has to be that $\sigma_2$ is not a point cluster, i.e., $\sigma_2 \in FX \setminus X.$ This shows Claim 3.

Since both $C$ and $B \setminus W'$ belong to a cluster $\sigma_2 \in FX \setminus X,$ by Proposition \ref{can subtract by compact sets in cluster} we have that both $C \setminus K$ and $(B \setminus W') \setminus K$ belong to $\sigma_2$ for any compact set $K.$ This implies that $(C \setminus K) \delta_F ((B \setminus W') \setminus K)$ for any compact $K,$ which implies that $(C \setminus K) \delta_F (B \setminus K)$ for any compact set $K.$ Consequently, $C{\bf b}_FB,$ which contradicts the original assumption about $C$. Thus, $C \in \sigma,$ finishing the proof that $\sigma$ is an element of $FX \setminus X.$
\end{proof}

\begin{theorem}
Let $X$ be a locally compact Hausdorff space. Then $\mathcal{U}_{{\bf b}_F}X$ is homeomorphic to $FX\setminus X$.
\end{theorem}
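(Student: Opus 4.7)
The key prior result is Proposition \ref{freudenthal boundary is boundary of freudenthal coarse proximity structure}, which tells us that as collections of subsets of $X$, the clusters making up $FX \setminus X$ are literally the same as those making up $\mathcal{U}_{{\bf b}_F}X$. So the plan is not to build a bijection from scratch, but to promote the set-theoretic identity map to a homeomorphism.

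The first step is to observe that the identity map $\iota : X \to X$ is a proximity map from $(X, \delta_{dis,{\bf b}_F})$ to $(X, \delta_F)$. Indeed, $A\,\delta_{dis,{\bf b}_F}\,B$ means either $A \cap B \neq \emptyset$ (whence $A\,\delta_F\,B$ by axiom 3 of a proximity) or $A\,{\bf b}_F\,B$, in which case $(A\setminus D)\,\delta_F\,(B\setminus D)$ for every compact $D$, so in particular $A\,\delta_F\,B$ (taking $D = \emptyset$). Therefore, by Theorem \ref{extension_theorem}, $\iota$ extends uniquely to a continuous map $\tilde{\iota} : \mathfrak{X} \to FX$ between the two Smirnov compactifications, given on a cluster $\sigma_1 \in \mathfrak{X}$ by
\[
\tilde{\iota}(\sigma_1) \;=\; \{A \subseteq X \mid A\,\delta_F\,B \text{ for all } B \in \sigma_1\}
\]
according to Proposition \ref{associated cluster}.

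The next step is to show that $\tilde{\iota}$ restricts to the set-theoretic identity on $\mathcal{U}_{{\bf b}_F}X$ viewed as a subset of $FX \setminus X$. Fix $\sigma_1 \in \mathcal{U}_{{\bf b}_F}X$ and let $\sigma_2 := \tilde{\iota}(\sigma_1)$. For any $A \in \sigma_1$, axiom (1) of a cluster in $\mathfrak{X}$ gives $A\,\delta_{dis,{\bf b}_F}\,B$ for every $B \in \sigma_1$; by the containment $\delta_{dis,{\bf b}_F} \subseteq \delta_F$ noted above, this yields $A \in \sigma_2$. Thus $\sigma_1 \subseteq \sigma_2$. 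Now Proposition \ref{freudenthal boundary is boundary of freudenthal coarse proximity structure} tells us that $\sigma_1$ is itself a cluster in $FX$ (in fact lying in $FX \setminus X$), and so is $\sigma_2$; since distinct clusters in the same proximity space cannot contain one another, $\sigma_1 = \sigma_2$. Hence $\tilde{\iota}|_{\mathcal{U}_{{\bf b}_F}X}$ is the identity map onto $FX \setminus X$.

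Finally, $\mathcal{U}_{{\bf b}_F}X$ is compact (by the general result established in Section \ref{discrete extensions and boundaries of coarse proximity spaces}) and $FX \setminus X$ is Hausdorff as a subspace of the compact Hausdorff $FX$; the restriction $\tilde{\iota}|_{\mathcal{U}_{{\bf b}_F}X}$ is a continuous bijection, hence a homeomorphism. The main content of the argument is therefore really absorbed into the earlier proposition matching the two classes of clusters — the remaining work is just the standard extend-and-restrict manoeuvre, together with the observation that "$\sigma_1 \subseteq \sigma_2$ forces $\sigma_1 = \sigma_2$" to identify the extended map with the identity on the common boundary.
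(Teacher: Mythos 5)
Your proof is correct, and it rests on the same two pillars as the paper's: Proposition \ref{freudenthal boundary is boundary of freudenthal coarse proximity structure} identifies the two boundaries as the same set of clusters, and the compact-to-Hausdorff trick upgrades a continuous bijection to a homeomorphism. The only place you diverge is in how continuity of the identity is established. The paper checks directly that $id:\mathcal{U}_{{\bf b}_F}X\to FX\setminus X$ is a proximity map for the two subspace Smirnov proximities: given $\mathcal{A}\,\delta^{*}_{dis,{\bf b}_F}\,\mathcal{B}$ and absorbing sets $A,B$, it observes $A\,\delta_{dis,{\bf b}_F}\,B\Rightarrow A\,\delta_F\,B$ and is done in two lines. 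You instead start from the containment $\delta_{dis,{\bf b}_F}\subseteq\delta_F$ on the base space, invoke Theorem \ref{extension_theorem} to get a continuous map $\tilde{\iota}:\mathfrak{X}\to FX$, and then must do the extra work of showing $\tilde{\iota}(\sigma_1)=\sigma_1$ on the boundary, which you handle correctly via $\sigma_1\subseteq\tilde{\iota}(\sigma_1)$ and the maximality of clusters (a fact the paper itself records and uses in Proposition \ref{functor_propoerties}). Your route is slightly longer but buys a little more: it exhibits the homeomorphism as the restriction of a globally defined map $\mathfrak{X}\to FX$ of compactifications, which makes the compatibility of the two compactifications explicit rather than only comparing their boundaries. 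Both arguments are sound; the paper's is the more economical of the two.
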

\begin{proof}
	In light of Proposition \ref{freudenthal boundary is boundary of freudenthal coarse proximity structure} it will be enough to show that the identity map $id:\mathcal{U}X\rightarrow FX\setminus X$ is continuous. As both of these spaces are compact and Hausdorff, it is enough to show that the identity map is a proximity map. Let $\mathcal{A},\mathcal{B}\subseteq \mathcal{U}_{{\bf b}_F}X$ be such that $\mathcal{A}\delta^{*}_{dis,{\bf b}_F}\mathcal{B}$ where $\delta^{*}_{dis,{\bf b}_F}$ is the proximity relation of $\mathcal{U}X$. We wish to show that $\mathcal{A}\delta^{*}_F\mathcal{B}$ where $\delta^{*}_F$ is the proximity on $FX$. Let $A,B\subseteq X$ be absorbing sets for $\mathcal{A}$ and $\mathcal{B},$ respectively. Since $\mathcal{A}\delta^{*}_{dis,{\bf b}_F}\mathcal{B},$ we know that $A \delta_{dis,{\bf b}_F}B,$ i.e., $A \cap B \neq \emptyset$ or $A{\bf b}_F B.$ Since either of these conditions imply $A\delta_FB,$ we have that $\mathcal{A}\delta^{*}_F\mathcal{B}$, which shows that $id$ is a proximity map and therefore a homeomorphism.
\end{proof}

\bibliographystyle{abbrv}
\bibliography{Boundaries_of_coarse_proximity_spaces_and_boundaries_of_compactifications}{}

\end{document}